\documentclass[11pt]{amsart}
\usepackage{amssymb}
\usepackage[hmargin=1.1in,vmargin=1.15in]{geometry}
\usepackage{xcolor}
\usepackage{tikz}
\usetikzlibrary{arrows.meta}
\usepackage{algorithm}
\usepackage{algpseudocode}
\usepackage[final,expansion=false]{microtype}
\usepackage{booktabs}
\usepackage{array}
\usepackage{cite}
\usepackage{hyperref}

\hypersetup{
  colorlinks=true,
  linkcolor=blue!45!black,
  citecolor=blue!45!black,
  urlcolor=blue!45!black
}

\newtheorem{theorem}{Theorem}[section]
\newtheorem{lemma}[theorem]{Lemma}
\newtheorem{proposition}[theorem]{Proposition}
\newtheorem{corollary}[theorem]{Corollary}
\newtheorem{conjecture}[theorem]{Conjecture}
\theoremstyle{plain}
\newtheorem{namedthm}{Theorem}

\newtheorem{namedcor}[namedthm]{Corollary}
\newtheorem{namedconj}[namedthm]{Conjecture}
\theoremstyle{definition}
\newtheorem{definition}[theorem]{Definition}
\newtheorem{remark}[theorem]{Remark}

\newcommand{\Sg}{S_g}
\newcommand{\MCG}{\mathrm{MCG}}

\newcommand{\Aut}{\mathrm{Aut}}
\newcommand{\sgn}{\mathrm{sgn}}
\newcommand{\Vstd}{V_{\mathrm{std}}}

\newcommand{\Cent}{\mathrm{Cent}}

\makeatletter
\renewcommand\subsubsection{\@startsection{subsubsection}{3}{\z@}{.5\linespacing\@plus.7\linespacing}{-.5em}{\normalfont\bfseries}}
\renewcommand\paragraph{\@startsection{paragraph}{4}{\z@}{.5\linespacing\@plus.7\linespacing}{-.5em}{\normalfont\bfseries}}
\makeatother

\begin{document}

\title[Minimally intersecting filling curve systems]{Asymptotic enumeration of minimally intersecting filling curve systems on closed surfaces}
\author{Sayantika Mondal}
\address{Department of Mathematics, Fordham University, 441 East Fordham Road, Bronx, NY 10458, USA}
\email{smondal6@fordham.edu}
\subjclass[2020]{Primary 57K20, 05A16; Secondary 05C10, 20C30, 60C05}
\keywords{Filling curves, mapping class group, ribbon graphs, unicellular maps, characters of the symmetric group, asymptotic enumeration}

\begin{abstract}
Let $\Sg$ be the closed oriented surface of genus $g \geq 2$, and let $\Gamma$ be a finite collection of closed curves on $\Sg$ that fills, in the sense that its complement is a union of disks. The total number of double points of such a $\Gamma$ in minimal position is at least $2g-1$; we call $\Gamma$ \emph{minimally intersecting} when equality holds. We prove that the number $\mathcal{N}_g$ of mapping class group orbits of minimally intersecting filling curve systems satisfies
\[
\mathcal{N}_g \,\sim\, \frac{16^g\,(2g)!}{64\,\pi\sqrt{2}\,g^3} \qquad (g \to \infty).
\]

We also study the single-curve subproblem, in which $\Gamma$ consists of a single component. Every such curve is in particular a filling system, so the corresponding count $\mathcal{N}_g^{(1)}$ satisfies $\mathcal{N}_g^{(1)} \leq \mathcal{N}_g$, which improves the upper bound previously known for single filling curves. 
Finally, we compute $\mathcal{N}_g$ and $\mathcal{N}_g^{(1)}$ exactly for small genus and conjecture an asymptotic growth rate.
\end{abstract}

\maketitle

\section{Introduction}\label{sec:intro}

The study of curves on surfaces, and of mapping class group orbits of such curves and curve systems, is a recurring theme in low-dimensional topology and Teichm\"uller theory. A closed curve $\gamma$ on the closed oriented surface $\Sg$ \emph{fills} $\Sg$ if every component of $\Sg \setminus \gamma$ is a disk. More generally, a finite collection $\Gamma = \gamma_1 \cup \cdots \cup \gamma_c$ of closed curves fills $\Sg$ if every component of $\Sg \setminus \Gamma$ is a disk. All closed curves are assumed to be primitive, that is, not freely homotopic to a power of another closed curve. Filling curves and multicurves arise in the construction of pseudo-Anosov mapping classes~\cite{Penner1988,Thurston1988}, in the realization of hyperbolic structures~\cite{ThurstonSpine}, and in the description of the moduli space of curves via dessins d'enfants~\cite{LandoZvonkin2004}.  They were also the focus of my PhD thesis~\cite{MondalThesis}, and this paper grew out of parts of it: the material of Sections~\ref{sec:prelim}--\ref{sec:lowgenus} appears there, and provides a more detailed exposition on the character theoretic background and computations.

Throughout, $\Gamma$ is in \emph{minimal position}: among multicurves in its free homotopy class (free homotopy of each component, plus the minimum number of intersection points among distinct components), the total number of double points is minimum. We write $\iota(\Gamma)$ for this minimum; it counts both self-intersections of each component and intersections between distinct components. An Euler-characteristic computation (Proposition~\ref{prop:euler}; see~\cite{Basmajian2013}) gives the lower bound $\iota(\Gamma) \geq 2g-1$ for any filling system on $\Sg$, with equality if and only if the complement is a single disk. The bound is attained already by single curves: Arettines~\cite{Arettines2015} constructed, in every genus $g \geq 2$, a filling closed curve with exactly $2g-1$ self-intersections. We call such $\Gamma$ \emph{minimally intersecting}.

\subsection{Two enumeration problems}

We study two related counting problems:
\begin{itemize}
\item[(M)] count $\MCG(\Sg)$-orbits of minimally intersecting filling curve \emph{systems} $\Gamma$ (any number of components $c \geq 1$);
\item[(S)] count $\MCG(\Sg)$-orbits of minimally intersecting filling \emph{single} curves $\gamma$ (the case $c = 1$).
\end{itemize}
The dictionary developed in Section~\ref{sec:encoding} associates to a filling system on $\Sg$ an oriented $4$-regular ribbon graph of genus $g$, and translates both problems into conditions on a pair of permutations. A unicellular ribbon graph corresponds to a filling system, and the number of components can be recovered from the cycle structure of the Petrie permutation (Definition~\ref{def:petrie}), so single curves are the systems satisfying one further condition on this permutation. Theorem~\ref{thm:dictionary} records both correspondences. Sections~\ref{sec:encoding}--\ref{sec:lowgenus} treat problem (M); Section~\ref{sec:singlecurves} treats problem (S).

\subsection{Main results}

We denote by $\mathcal{N}_g$ the number of $\MCG(\Sg)$-orbits of minimally intersecting filling curve systems on $\Sg$, with $c \geq 1$ components, and $\mathcal{N}_g^{(1)}$ for the count restricted to single curves; here $\MCG(\Sg)$ is the group of isotopy classes of orientation-preserving homeomorphisms of $\Sg$.
The first result classifies these orbits combinatorially: via the dictionary of Section~\ref{sec:encoding}, they correspond to simultaneous-conjugacy classes of pairs of permutations satisfying explicit cycle-type conditions. Throughout, $N := 8g-4$, and $\sigma, \alpha \in S_N$ are permutations of the half-edge set $[N]$ of cycle types $4^{2g-1}$ and $2^{4g-2}$ respectively (conjugacy classes $C_\sigma$, $C_\alpha$); $\varphi := \sigma\alpha$ is the \emph{face permutation} and $\tau := \sigma^2\alpha$ the \emph{Petrie permutation}. We write $C_N$ for the class of $N$-cycles and $C_{(N/2)^2}$ for the class of permutations with two cycles of length $N/2$.

\begin{namedthm}\label{thmA}\textup{(Theorem~\ref{thm:dictionary-restate})}
Let $g \geq 2$. Under the dictionary $\Gamma \mapsto (\sigma,\alpha)$ of Section~\textup{\ref{sec:encoding}}:
\begin{enumerate}
\item[\textup{(i)}] $\MCG(\Sg)$-orbits of minimally intersecting filling curve \emph{systems} on $\Sg$ are in bijection with simultaneous-conjugacy classes of pairs $(\sigma,\alpha) \in C_\sigma \times C_\alpha$ satisfying the \emph{face condition} $\varphi = \sigma\alpha \in C_N$;
\item[\textup{(ii)}] among these, the orbits of \emph{single} filling curves are exactly the pairs that also satisfy the \emph{Petrie condition} $\tau = \sigma^2\alpha \in C_{(N/2)^2}$, equivalently those for which $G^\ast := (\sigma^2,\alpha)$ is connected.
\end{enumerate}
In particular $\mathcal{N}_g$ and $\mathcal{N}_g^{(1)}$ are finite: $\mathcal{N}_g$ counts the classes satisfying the face condition, and $\mathcal{N}_g^{(1)}$ those satisfying both.
\end{namedthm}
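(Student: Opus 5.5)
The plan is to build the dictionary in three stages: from curve systems to ribbon graphs, from ribbon graphs to permutation pairs, and then to read off the two conditions on faces and components.

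\textbf{From a system to a ribbon graph, and back.} Given a filling system $\Gamma$ in minimal position with $\iota(\Gamma)=2g-1$ double points, take its image as a graph $G\subset \Sg$. Its vertices are the double points, each of valence $4$, so $G$ has $4g-2$ edges and $N=8g-4$ half-edges. The orientation of $\Sg$ gives a cyclic order of the half-edges at each vertex; this is $\sigma$, of cycle type $4^{2g-1}$. Edge-reversal gives $\alpha$, of cycle type $2^{4g-2}$. The faces of the ribbon graph are the cycles of $\varphi=\sigma\alpha$. By Proposition~\ref{prop:euler}, $V-E+F=2-2g$ forces $F=1$ exactly when $\iota(\Gamma)=2g-1$, and a unique face means $\varphi$ is an $N$-cycle.

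Conversely, take a pair $(\sigma,\alpha)$ satisfying the face condition and glue a single $N$-gon along $\varphi$. This produces a closed oriented surface whose genus is $g$ by the same Euler count, together with an embedded $4$-regular graph. At each vertex, following the ``straight-ahead'' rule (half-edge $h$ continues to $\sigma^2 h$) traces out curves $\gamma_i$, and these fill because the complement is one disk. Choosing a homeomorphism of this surface to $\Sg$ is well defined up to $\MCG(\Sg)$. Relabeling the half-edges corresponds to simultaneous conjugation, and an orientation-preserving homeomorphism carries the ribbon structure to an isomorphic one. Together these give a bijection between $\MCG$-orbits of embedded graphs and conjugacy classes of pairs. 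Finiteness of $\mathcal N_g$ and $\mathcal N_g^{(1)}$ follows immediately, since $S_N\times S_N$ is finite.

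\textbf{Main obstacle: minimal position.} The key step is showing that the curves produced from a pair are automatically in minimal position and primitive, and that two minimal-position realizations of the same homotopy class give the same orbit. For minimality, I would argue by contradiction. If the constructed system were not minimal, the Hass--Scott bigon/monogon criterion would produce an immersed monogon or bigon. Such a region is a union of faces; with only one face, which is a disk whose boundary traverses every edge, this would force a degenerate configuration. I would then exclude that configuration: an empty monogon or bigon would be a face of length $1$ or $2$, but the face has length $N\ge 12$. Nonempty (immersed) ones I would rule out via Hass--Scott's result that a non-minimal configuration admits an embedded innermost monogon or bigon. Once minimality holds, the same count $\iota(\Gamma)=2g-1\ge 2g-1$ shows the system is minimally intersecting. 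For uniqueness, I would cite Hass--Scott/de~Graaf--Schrijver: minimal-position representatives are related by third Reidemeister moves plus isotopy. A third Reidemeister move requires a triangular face, which is impossible with a single $N$-gon face, so the embedded graph is unique up to isotopy. Primitivity would follow because a non-primitive curve in minimal position has extra self-intersections forming bigons.

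\textbf{Part (ii).} This is a permutation bookkeeping step. The straight-ahead rule means that a component of $\Gamma$ is traced by iterating $\sigma^2$ on half-edges interleaved with $\alpha$. The orbits of $\langle\sigma^2,\alpha\rangle$ therefore correspond to components, and each component uses its half-edges in two directions. Following the curve one way gives a cycle of $\tau=\sigma^2\alpha$; following it the other way gives a second cycle, because the reversal $h\mapsto \alpha h$ conjugates $\tau$ to $\tau^{-1}$ on the complementary half-edges. Hence $\tau$ has exactly $2c$ cycles. So $c=1$ holds iff $\tau$ has two cycles, which must both have length $N/2$ since the two traversals of one curve have equal length. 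This is exactly $\tau\in C_{(N/2)^2}$, equivalently connectivity of $G^\ast$.
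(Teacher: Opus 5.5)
Your architecture is the paper's. Crossings and arcs give $(\sigma,\alpha)$, the Euler count forces $F=1$, and relabelling is simultaneous conjugacy (Proposition~\ref{prop:bijection}, \S\ref{ssec:bijection}). Your part (ii), where $\alpha\tau\alpha=\tau^{-1}$ swaps the two $\tau$-cycles of a strand, is Lemma~\ref{lem:tau-doubled} with Remark~\ref{rem:single-vs-multi}. You should still say why a $\tau$-cycle $Z$ satisfies $\alpha Z\cap Z=\emptyset$; the alternating-cycle picture gives this, since $Z$ consists of the even-indexed and $\alpha Z$ of the odd-indexed half-edges. You also raise two points the paper passes over: its surjectivity step simply asserts that the smoothed system is a minimally intersecting filling system, and it does not discuss independence of $G_\Gamma$ from the chosen minimal representative. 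Your Reidemeister~III argument for the second point is sound, since a single $N$-gonal face leaves no triangle to move across.

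\textbf{The gap is in your proof of minimality.} There is no Hass--Scott theorem giving an \emph{embedded} innermost monogon or bigon for curves with self-intersections. Their result gives only a \emph{singular} $1$-gon or $2$-gon, and they show by example that embedded ones need not exist. A singular bigon can wrap over the unique face many times, so checking that the face has length $N\ge 12$ excludes only empty embedded configurations. Emptiness is not the real issue: an embedded disk with boundary on $\Gamma$ would have to contain the unique face, and so would its complement, which is absurd. What fails is the existence claim.

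Your primitivity remark is also not an argument. The minimal representative of $\delta^k$ has exactly $k-1$ double points, none of them ``extra''.

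One clean way to settle both points is to glue a regular right-angled hyperbolic $N$-gon along the side pairing; this works since $N\ge 12>4$. All sides have equal length and four corners of angle $\pi/2$ meet at each vertex, so Poincar\'e's theorem gives a hyperbolic metric on $S_g$. In it, $h$ and $\sigma^2h$ meet at angle $\pi$, so every strand is a closed geodesic, and the strands cross only transversally at the $2g-1$ vertices. Closed geodesics realize minimal self- and mutual intersection, which gives minimal position. For primitivity, a strand crosses each edge at most once (because $Z\cap\alpha Z=\emptyset$), whereas the geodesic in the class of $\delta^k$ with $k\ge2$ is a $k$-fold traversal. Alternatively, lift to the universal cover and apply combinatorial Gauss--Bonnet to an innermost monogon or bigon between lifted strands, using that every face has degree $N\ge 12$.
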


\begin{namedthm}\label{thmB}\textup{(Theorem~\ref{thm:main-asymptotic})}
As $g \to \infty$,
\[
\mathcal{N}_g \;\sim\; \frac{16^g\,(2g)!}{64\,\pi\sqrt{2}\,g^3}.
\]
\end{namedthm}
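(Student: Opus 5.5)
The plan is to count \emph{labeled} pairs $(\sigma,\alpha)\in C_\sigma\times C_\alpha$ with $\sigma\alpha\in C_N$ by the Frobenius character formula, and then to pass to simultaneous-conjugacy classes, which by Theorem~\ref{thmA}(i) are exactly what $\mathcal{N}_g$ counts. Let $L_g$ denote the number of such labeled pairs. The Frobenius formula gives
\[
L_g=\frac{|C_\sigma|\,|C_\alpha|\,|C_N|}{N!}\sum_{\lambda\vdash N}\frac{\chi^\lambda(C_\sigma)\,\chi^\lambda(C_\alpha)\,\chi^\lambda(C_N)}{\dim\lambda}.
\]
Since $\chi^\lambda(C_N)$ vanishes unless $\lambda=(N-k,1^k)$ is a hook, in which case it equals $(-1)^k$, only the $N$ hook terms survive. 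For these, $\dim\lambda=\binom{N-1}{k}$.

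\textbf{Evaluating the hook sum.} The values of hook characters on a class with cycle type $\mu$ have the generating function
\[
\sum_k \chi^{(N-k,1^k)}(\mu)\,y^k=\frac{\prod_i\bigl(1-(-y)^{\mu_i}\bigr)}{1+y}.
\]
For $\mu=4^{2g-1}$ this gives $(1-y^4)^{2g-1}/(1+y)$, and for $\mu=2^{4g-2}$ it gives $(1-y^2)^{4g-2}/(1+y)$. I would use these to show that the trivial term ($k=0$) and the sign term ($k=N-1$) each contribute exactly $1$. For the sign term this uses the signs: $\sgn(\sigma)=(-1)^{2g-1}=-1$, $\sgn(\alpha)=1$, and $\chi(C_N)=(-1)^{N-1}=-1$, so the product is $+1$. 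All remaining hooks have $|\chi^\lambda(C_\sigma)\chi^\lambda(C_\alpha)|/\binom{N-1}{k}$ summing to $o(1)$. The coefficients of the two generating functions are at most exponential in $\min(k,N-1-k)$, namely binomial-type quantities in $2g$ and $4g$ variables, while $\binom{N-1}{k}$ beats them away from the ends. Near the ends the first few terms are $O(1/N)$.

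Hence
\[
L_g\sim 2\,\frac{|C_\sigma||C_\alpha|(N-1)!}{N!},
\qquad
|C_\sigma|=\frac{N!}{4^{2g-1}(2g-1)!},\quad
|C_\alpha|=\frac{N!}{2^{4g-2}(4g-2)!}.
\]

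\textbf{Passing to orbits.} By Burnside, $\mathcal{N}_g=\frac{1}{N!}\sum_{\pi\in S_N}\mathrm{Fix}(\pi)$. Here the stabilizer of a pair is the automorphism group of a unicellular map, which embeds in the cyclic group $\langle\varphi\rangle$ of order dividing $N$. The identity contributes $L_g/N!$. I would bound the contribution of each nontrivial rotation $\varphi^j$ by counting quotient maps: a map with a symmetry of order $d>1$ is determined by a smaller quotient structure of size $N/d$ with some branch data. This should give a contribution smaller by a factor roughly $(N!)^{1-1/d}$ in the factorial growth, so the total over all $d>1$ is $o(L_g/N!)$. Therefore
\[
\mathcal{N}_g\sim\frac{2\,(N-1)!}{16^{2g-1}(2g-1)!\,(4g-2)!}.
\]

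\textbf{Reducing the constant.} The last step is to apply Stirling with $N=8g-4$ to rewrite this in the form $16^g(2g)!/(64\pi\sqrt2\,g^3)$. One would write $(8g-5)!/(4g-2)!$ via $\binom{8g-4}{4g-2}(4g-2)!/(8g-4)$, and $(4g-2)!/(2g-1)!$ similarly, which produces the factors $\pi$ and $\sqrt2$.

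I expect the main obstacle to be the uniform bound showing the intermediate hook terms are $o(1)$. This needs an explicit estimate of the coefficients of $(1-y^4)^{2g-1}/(1+y)$ against $\binom{N-1}{k}$ across the whole range of $k$. The symmetric-map bound in the Burnside step is the secondary difficulty. If the factor-of-$2$ bookkeeping or the Stirling reduction disagrees with the stated constant, that would signal that I have mishandled the automorphism or parity contributions, and I would recheck those first.
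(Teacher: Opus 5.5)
Your plan is correct and follows essentially the paper's route: the Frobenius formula with the hook collapse, the trivial and sign hooks contributing exactly $1$ each, the other $N-2$ hooks negligible (the paper makes your binomial comparison rigorous with a triple Chu--Vandermonde inequality, bounding each middle term by $1/\binom{2g}{n}$ for some $1\le n\le 2g-1$ with each $n$ used at most four times), symmetric maps controlled through quotients by prime-order rotations (your Burnside sum is the same bookkeeping as the paper's orbit--stabilizer defect), and Stirling via the same binomial identities. One correction to your heuristic: since $L_g/N!$ is only $\exp\bigl(\tfrac14 N\log N+O(N)\bigr)$, a symmetry of order $d$ saves a factor of about $\exp\bigl(-\tfrac14(1-\tfrac1d)N\log N\bigr)$, not $(N!)^{-(1-1/d)}$; this saving is weakest at $d=2$, where the quotient acquires a $2$-valent vertex and is no longer a $4$-regular map (the paper handles this case with a general factorization count and a bound on conjugacy classes with short cycles), but it is still superpolynomially small, so your conclusion stands.
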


For single curves we prove an upper bound and an exact reduction, and conjecture a precise asymptotic.

\begin{namedcor}\label{corC}\textup{(Corollary~\ref{cor:single-upper})}
For every $g \geq 2$, $\mathcal{N}_g^{(1)} \leq \mathcal{N}_g$, so
\[
\mathcal{N}_g^{(1)} \;\leq\; \frac{16^g\,(2g)!}{64\,\pi\sqrt{2}\,g^3}\,(1+o(1)).
\]
This improves the upper bound of Parsad and Sanki~\cite{ParsadSanki2022} for single filling curves; see also~\cite{AougabHuang2015,AougabMenascoNieland} for the related count of minimally intersecting filling pairs.
\end{namedcor}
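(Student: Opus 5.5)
The plan is to obtain Corollary~\ref{corC} directly from Theorems~\ref{thmA} and~\ref{thmB}; no new enumeration is needed. The inequality $\mathcal{N}_g^{(1)} \le \mathcal{N}_g$ comes from the inclusion of orbit sets. The asymptotic upper bound then follows by substituting the asymptotic for $\mathcal{N}_g$.

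\textbf{Step 1: the inequality.} By Theorem~\ref{thmA}(i), $\mathcal{N}_g$ equals the number of simultaneous-conjugacy classes of pairs $(\sigma,\alpha)\in C_\sigma\times C_\alpha$ with $\sigma\alpha\in C_N$. By Theorem~\ref{thmA}(ii), $\mathcal{N}_g^{(1)}$ counts the classes that in addition satisfy the Petrie condition $\sigma^2\alpha\in C_{(N/2)^2}$. Both conditions depend only on cycle types. Cycle types are invariant under simultaneous conjugation, since $(\pi\sigma\pi^{-1})^2(\pi\alpha\pi^{-1})=\pi\sigma^2\alpha\pi^{-1}$. So the Petrie condition is well defined on classes, and it cuts out a subset of the finite set counted by $\mathcal{N}_g$. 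This gives $\mathcal{N}_g^{(1)}\le\mathcal{N}_g$ for every $g\ge 2$.

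The same conclusion can be checked geometrically, without the dictionary. A single minimally intersecting filling curve is a minimally intersecting filling system with $c=1$. Mapping class group orbits of single curves are orbits of systems of this kind, so the orbit map is injective. The one point to verify is that primitivity and minimal position are the same conditions in both settings, which holds by the conventions fixed in the introduction.

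\textbf{Step 2: the asymptotic bound.} By Theorem~\ref{thmB}, $\mathcal{N}_g = \frac{16^g(2g)!}{64\pi\sqrt2\,g^3}(1+\epsilon_g)$ with $\epsilon_g\to0$. Combined with Step 1, this gives the displayed bound with $o(1)=\epsilon_g$, or more precisely with $\max(\epsilon_g,0)$, since we only need an upper bound.

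\textbf{Step 3: the comparison with Parsad--Sanki.} This step requires recalling the bound of~\cite{ParsadSanki2022} and checking that its growth exceeds $16^g(2g)!/g^3$ for large $g$. I expect this to be the only part needing care. The plan is to express their bound through Stirling's formula, compare it factor by factor, and confirm that our bound is smaller by a factor tending to infinity, or at least by a constant factor. If their bound has the form of a $(4g-2)!$-type count divided by a smaller symmetry factor, then the ratio with $(2g)!\,16^g$ grows super-exponentially, and the improvement is immediate.
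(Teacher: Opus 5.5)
Your proposal is correct and matches the paper's proof. That proof is your geometric version of Step~1 (a single minimally intersecting filling curve is a system with $c=1$, and distinct $\MCG(\Sg)$-orbits of curves stay distinct as orbits of systems), followed by substituting Theorem~\ref{thm:main-asymptotic}; your dictionary-based version of Step~1 says the same thing in permutation language. The paper asserts the improvement over Parsad--Sanki without any argument, so your Step~3 is not needed for what is actually proved there, though as written it is only a conditional plan, not a verification.
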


\begin{namedconj}\label{conjD}\textup{(Conjecture~\ref{conj:single-curve})}
As $g \to \infty$,
\[
\mathcal{N}_g^{(1)} \;\sim\; \frac{16^g\,(2g)!}{256\,\sqrt{2\pi}\,g^{7/2}}, \qquad \frac{\mathcal{N}_g^{(1)}}{\mathcal{N}_g} \;\sim\; \frac14\sqrt{\frac{\pi}{g}} \;=\; \sqrt{\frac{\pi}{16g}}.
\]
\end{namedconj}

For the proof of Theorem~\ref{thmB}, counting orbits reduces to counting pairs $(\sigma,\alpha) \in C_\sigma \times C_\alpha$ with $\sigma\alpha$ an $N$-cycle, and the central quantity is the probability that $\sigma\alpha$ is an $N$-cycle when $(\sigma,\alpha)$ is drawn uniformly from $C_\sigma \times C_\alpha$. A parity constraint then forces $\sigma\alpha$ into a single coset, on which the $N$-cycles have density $2/N$; this gives a heuristic value $2/N$ for the probability and explains the leading order, but says nothing about the error. To turn the heuristic into a theorem one needs the exact value, and this is what the translation into the character theory of $S_N$ provides: the count becomes a character sum that, for an $N$-cycle, collapses to a short and explicit form, from which the leading term $2/N$ reappears from two distinguished characters and the remainder is shown to be $O(N^{-2})$. A separate argument (Section~\ref{ssec:rooting}) shows that systems with non-trivial automorphism are too few to affect the asymptotic. Combining these with Stirling's formula gives the constant $1/(64\pi\sqrt{2})$.

For Conjecture~\ref{conjD}, the single-curve count carries the extra Petrie condition $\tau = \sigma^2\alpha \in C_{(N/2)^2}$, equivalently that the auxiliary $2$-regular map $G^\ast=(\sigma^2,\alpha)$ be connected. The prediction rests on two facts. First, $\sigma^2$ is a uniformly random fixed-point-free involution when $\sigma$ is uniform (Lemma~\ref{lem:squaring-surjective}), so unconditionally $\tau$ is a product of two random involutions and the two-$(N/2)$-cycle type has the \emph{exact} probability $q_g = \tfrac{2^{N-1}(N/2-1)!\,(N/2)!}{N!}\sim\sqrt{\pi/(2N)}$ (Theorem~\ref{thm:two-matchings}); this is $\Theta(N^{-1/2})$, far larger than the $\Theta(N^{-2})$ that a uniform-on-cycle-types model would give, because the cycle type of two random matchings is concentrated on a few long cycles. Second, the face condition ties $\sigma$ to $\sigma^2$, but through the Gelfand pair $(S_N, H_{N/2})$ the conditional probability is exactly $P_g = q_g\,\rho_U/p_g$, where $\rho_U$ is a single explicit one-face probability (Theorem~\ref{thm:gelfand-reduction}); the question reduces to whether $\rho_U \sim p_g$, i.e. whether the one-face conditioning is asymptotically independent of the Petrie cycle type (Conjecture~\ref{conj:decoupling}). Section~\ref{sec:singlecurves} proves the reduction, gives the numerical evidence for the remaining limit, and explains the obstacle to proving it.

\subsection{Comparison with prior work}

Cahn, Fanoni, and Petri~\cite{CahnFanoniPetri2018} count $\MCG(\Sg)$-orbits of curves with at most $k$ self-intersections for fixed $k$ as $g \to \infty$, where the curves are far from filling; the present paper treats the opposite extreme $k = 2g-1$. Minimally intersecting filling pairs of simple closed curves were studied by Aougab and Huang~\cite{AougabHuang2015} and by Aougab, Menasco, and Nieland~\cite{AougabMenascoNieland}, and minimally intersecting filling curves are used as building blocks in~\cite{BasmajianMondal2025}. For single filling curves, Corollary~\ref{corC} improves the upper bound of Parsad and Sanki~\cite{ParsadSanki2022}. The enumeration of unicellular maps goes back to Harer and Zagier~\cite{HarerZagier1986}, with later treatments by Goulden--Jackson~\cite{GouldenJackson2008}, Goupil--Schaeffer~\cite{GoupilSchaeffer1998}, and Chapuy~\cite{Chapuy2011}; the connectivity of the auxiliary map $G^\ast = (\sigma^2,\alpha)$, which governs the single-curve count, has no counterpart there. Dually, a uniformly random pair $(\sigma,\alpha)$ is a random gluing of quadrilaterals, for which Chmutov and Pittel~\cite{ChmutovPittel2016}, extending Gamburd~\cite{Gamburd2006}, showed that $\sigma\alpha$ is within total variation distance $O(N^{-1})$ of the uniform distribution on its coset of $A_N$; since the $N$-cycles have probability of order $N^{-1}$, this does not determine $p_g$, which Theorem~\ref{thm:probability} computes up to $O(N^{-2})$.

\subsection{Organization}\label{ssec:organization}

Section~\ref{sec:prelim} provides the necessary topological background, a self-contained definition of ribbon graphs and their realization, the Petrie permutation, and a primer on the representation theory of $S_N$ (\S\ref{ssec:char-bkgd}). Section~\ref{sec:encoding} develops the curve-system--ribbon-graph dictionary and the systems bijection (Proposition~\ref{prop:bijection}), records the three-level correspondence (\S\ref{ssec:threelevel}), sets up the permutation encoding and proves Theorem~\ref{thmA}. Section~\ref{sec:probability} computes the unicellularity probability, and Section~\ref{sec:asymptotic} combines this with the orbit count, the automorphism estimate, and Stirling's formula to prove Theorem~\ref{thmB}. Section~\ref{sec:lowgenus} gives exact values in low genus; the enumeration algorithm used there is described in Appendix~\ref{sec:algorithm}. Section~\ref{sec:singlecurves} treats the single-curve subproblem: the upper bound, the unconditional Petrie law and the Gelfand-pair reduction, which are proved, and the resulting conjecture with its supporting data; the sampler behind the data is described in Appendix~\ref{app:sampler}.

\subsection*{AI reflections}
The single-curve problem arose naturally from my thesis work, and I had thought about it for some time, I wanted to implement the encoding of filling curves by pairs of permutations in $S_N$ and compute small genus values, so asked an AI assistant (Claude, by Anthropic) to write code computing small cases. In the process, it pointed out that the conditions I had given it described the problem for curve systems rather than for single curves, and it pointed me to results from the character theory of $S_N$, which while standard, are not tools I would have naturally reached for. This led me to study the problem for systems, which became the main results of this paper. It did not, however, solve the original problem: the single-curve count remains conjectural (Conjecture~\ref{conjD}). AI assistance was also used in writing the code in the ancillary files and in revising the exposition; all arguments have been written and checked by me, and I take full responsibility for the content.

\subsection*{Acknowledgments}
This paper grew out of the author's doctoral thesis at the Graduate Center, City University of New York. The author thanks her advisor, Ara Basmajian, for his guidance and support, and the Graduate Center for funding.

\section{Background and preliminaries}\label{sec:prelim}

Throughout, $\Sg$ denotes the closed oriented surface of genus $g \geq 2$, and $\MCG(\Sg)$ denotes its mapping class group of orientation-preserving homeomorphisms modulo isotopy~\cite{FarbMargalit2012}. A closed curve $\gamma$ is a continuous map $S^1 \to \Sg$ considered up to free homotopy; all closed curves are assumed to be primitive, i.e.\ not freely homotopic to a proper power of another closed curve. A \emph{curve system} (or \emph{multicurve}) is an unordered finite union $\Gamma = \gamma_1 \cup \cdots \cup \gamma_c$ of closed curves, considered up to free homotopy of each component. We say $\Gamma$ is \emph{generic} if it is a smooth immersion whose only multiple points are transverse double points. Every system is freely homotopic to a generic one, and the \emph{intersection number} $\iota(\Gamma)$ is the minimum total number of transverse double points over all generic representatives.

\subsection{Ribbon graphs}\label{ssec:ribbon}

Several inequivalent-looking definitions of ribbon graphs coexist in the literature, differing in whether faces are read off from $\sigma\alpha$ or $\sigma^{-1}\alpha$, in the orientation convention, and in whether connectivity is built in. Because the present paper depends on the precise interaction between the cycle structures of $\sigma$, $\alpha$, and several derived permutations, we fix one convention and develop everything from it, so that the reader need  not refer to other sources.

\begin{definition}\label{def:ribbon}
A \emph{ribbon graph} (also \emph{oriented combinatorial map}) is a triple $G = (H,\sigma,\alpha)$ where:
\begin{itemize}
\item $H$ is a finite set, whose elements are \emph{half-edges};
\item $\alpha$ is a fixed-point-free involution on $H$, whose orbits are the \emph{edges} of $G$;
\item $\sigma$ is a permutation of $H$, whose orbits (each with the cyclic ordering induced by $\sigma$) are the \emph{vertices} of $G$.
\end{itemize}
The \emph{face permutation} is $\varphi := \sigma\alpha$; its orbits are the \emph{faces} of $G$. We say $G$ is \emph{$4$-regular} if every $\sigma$-orbit has length $4$, and \emph{unicellular} if $\varphi$ is a single cycle. Here and throughout, a product $\sigma\alpha$ of permutations means ``apply $\alpha$ first, then $\sigma$''.
\end{definition}

\begin{remark}[Geometric meaning of $\sigma$, $\alpha$, and $\varphi$]\label{rem:geometric-meaning}
The three permutations have the following geometric interpretation.
\begin{itemize}
\item \emph{$\sigma$: rotation around a vertex.} Think of a vertex $v$ as a disk with $d$ marked points on its boundary, arranged in the cyclic order given by the surface orientation. The half-edges at $v$ are these marked boundary points, and $\sigma$ advances one step counterclockwise: $\sigma(h)$ is the next half-edge at the same vertex after $h$. Each $\sigma$-orbit is one vertex, of length equal to its valence.
\item \emph{$\alpha$: pairing across an edge.} An edge connects two half-edges, one at each end; $\alpha$ swaps them. Since every half-edge lies on exactly one edge, $\alpha$ is fixed-point-free.
\item \emph{$\varphi = \sigma\alpha$: traversal around a face.} Starting at a half-edge $h$, apply $\alpha$ to cross to the other end of its edge, then $\sigma$ to rotate counterclockwise to the next half-edge there. The $\varphi$-orbit of $h$ traces the entire boundary of the face containing $h$.
\end{itemize}
With vertices, edges, faces realized as $\sigma$-, $\alpha$-, $\varphi$-orbits, the Euler characteristic is $\chi = V - E + F$ with $E = |H|/2$.
\end{remark}

\begin{definition}\label{def:iso}
Ribbon graphs $G = (H,\sigma,\alpha)$ and $G' = (H',\sigma',\alpha')$ are \emph{isomorphic} if there is a bijection $\psi : H \to H'$ with $\psi\sigma\psi^{-1} = \sigma'$ and $\psi\alpha\psi^{-1} = \alpha'$. An \emph{automorphism} is an isomorphism $G \to G$; the automorphism group is $\Aut(G)$.
\end{definition}

To each ribbon graph $G$ one associates an oriented \emph{topological realization} $\Sigma(G)$: replace each degree-$d$ vertex by a disk with $d$ marked boundary points in the prescribed cyclic order, replace each edge by a rectangle gluing matching boundary points, and cap off every remaining boundary component by a disk. Its Euler characteristic is $\chi(\Sigma(G)) = V - E + F$, and its genus is $g(G) := 1 - \chi(\Sigma(G))/2$.

\subsection{The Petrie permutation}\label{ssec:petrie}

For a $4$-regular ribbon graph there is a second natural permutation, recording how strands pass \emph{straight through} each vertex.

\begin{definition}\label{def:petrie}
Let $G = (H,\sigma,\alpha)$ be a $4$-regular ribbon graph. The \emph{Petrie permutation} of $G$ is
\[
\tau \;:=\; \sigma^2\alpha .
\]
Its orbits are the \emph{zigzag paths} (or \emph{Petrie polygons}) of $G$.
\end{definition}

At a $4$-valent vertex with cyclic half-edge order $(a,b,c,d)$ one has $\sigma^2 = (a\,c)(b\,d)$, which pairs each half-edge with the one diametrically opposite. Thus $\tau = \sigma^2\alpha$ encodes the walk that crosses an edge (via $\alpha$) and then exits a vertex through the opposite half-edge (via $\sigma^2$), i.e. proceeds ``straight on'' rather than turning. This is precisely the valence-$4$ instance of the classical Petrie operator, whose orbits in a general map are the zigzag paths that turn alternately left and right~\cite{LandoZvonkin2004}; for $4$-regular maps ``alternately left and right'' specializes to ``straight through,'' and the operator becomes $\sigma^2\alpha$. 
We record two structural facts for later use.

\begin{lemma}[Parity of $\tau$]\label{lem:tau-parity}
Let $G$ be a $4$-regular ribbon graph with $V$ vertices. Then $\sgn(\tau) = +1$, i.e.\ $\tau \in A_{4V}$.
\end{lemma}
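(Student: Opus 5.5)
The plan is to compute $\sgn(\tau)$ multiplicatively, as $\sgn(\sigma^2)\,\sgn(\alpha)$, using only the cycle types of $\sigma$ and $\alpha$, which the $4$-regularity hypothesis fixes completely. Here $|H| = 4V$, and sign is a homomorphism $S_{4V}\to\{\pm1\}$. So it suffices to find the sign of each factor.

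First I would treat $\sigma^2$. The permutation $\sigma$ is a product of $V$ disjoint $4$-cycles. At each vertex $(a\,b\,c\,d)$ the square is $(a\,c)(b\,d)$, as noted after Definition~\ref{def:petrie}. Hence $\sigma^2$ is a fixed-point-free involution consisting of $2V$ disjoint transpositions, and $\sgn(\sigma^2) = (-1)^{2V} = +1$. Equivalently, $\sgn(\sigma^2) = \sgn(\sigma)^2 = 1$, which does not even use $4$-regularity.

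Next I would treat $\alpha$. It is a fixed-point-free involution on the $4V$ half-edges, so it is a product of $E = 2V$ disjoint transpositions. Therefore $\sgn(\alpha) = (-1)^{2V} = +1$. This step is where $4$-regularity enters: the count $E = |H|/2 = 2V$ is even.

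Multiplying gives $\sgn(\tau) = \sgn(\sigma^2)\sgn(\alpha) = +1$, so $\tau \in A_{4V}$. There is no real obstacle. The only point needing care is noticing that $\sigma^2$ contributes trivially for any $\sigma$, so that the content of the lemma lies entirely in the parity of the number of edges.
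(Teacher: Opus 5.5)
Your proof is correct and is essentially the paper's argument: both $\sigma^2$ and $\alpha$ are products of $2V$ disjoint transpositions, hence even, so $\tau=\sigma^2\alpha$ is even. Your side remark that $\sgn(\sigma^2)=\sgn(\sigma)^2=1$ holds for any $\sigma$, so that only the evenness of $E=|H|/2=2V$ matters, is a valid minor sharpening of the same idea.
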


\begin{proof}
$\sigma^2$ and $\alpha$ are fixed-point-free involutions on the $4V$ half-edges, each a product of $2V$ transpositions, so both are even and so is $\tau = \sigma^2\alpha$.
\end{proof}

\begin{lemma}[Cycle-pairing structure of $\tau$]\label{lem:tau-doubled}
The cycle type of $\tau = \sigma^2\alpha$ is a \emph{doubled partition}: every length occurs with even multiplicity. In particular the number of $\tau$-cycles is even.
\end{lemma}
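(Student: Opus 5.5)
Since $\sigma^2$ and $\alpha$ are both fixed-point-free involutions on $H$ (the first because $\sigma$ has all cycles of length $4$, the second by Definition~\ref{def:ribbon}), the plan is to prove the general fact that the product of two fixed-point-free involutions has doubled cycle type. Write $\beta := \sigma^2$, so $\tau = \beta\alpha$. The key identity is that $\alpha$ conjugates $\tau$ to its inverse:
\[
\alpha\,\tau\,\alpha^{-1} \;=\; \alpha\beta\alpha\alpha \;=\; \alpha\beta \;=\; (\beta\alpha)^{-1} \;=\; \tau^{-1},
\]
using $\alpha^2=\beta^2=1$. Hence $\alpha$ maps each $\tau$-cycle $Z$ bijectively onto a $\tau$-cycle $\alpha(Z)$ of the same length, traversed in the reverse direction.

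The main step is to show that $\alpha(Z)\neq Z$ for every cycle $Z$; then $\alpha$ induces a fixed-point-free involution on the set of $\tau$-cycles that preserves length, so the cycles pair off and every length appears with even multiplicity. Suppose instead that $\alpha(Z) = Z$ for some cycle $Z=(h,\tau h,\dots,\tau^{k-1}h)$. Then $\alpha$ restricted to $Z$ satisfies $\alpha\tau\alpha=\tau^{-1}$ on a single $k$-cycle, so it is a reflection of the $k$-gon: $\alpha(\tau^i h) = \tau^{m-i} h$ for some fixed $m$ (this follows since $\alpha(\tau^i h)=\tau^{-i}\alpha(h)$ and $\alpha(h)=\tau^m h$).

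I then treat the two parities of $m$ separately. If $m$ is even, take $i=m/2$: then $\alpha(\tau^{m/2}h)=\tau^{m/2}h$, a fixed point of $\alpha$, contradiction. If $m$ is odd, take $i=(m+1)/2$ and set $x=\tau^{i}h$: then $\alpha(x)=\tau^{-1}x$, i.e.\ $\tau\alpha x = x$, i.e.\ $\beta\alpha\alpha x=\beta x = x$, so $x$ is a fixed point of $\beta=\sigma^2$, again a contradiction because every $\sigma$-cycle has length $4$ and so $\sigma^2$ has no fixed points. (Indices are taken mod $k$, which is harmless.) This parity split is the only delicate point; everything else is formal.

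Finally, since each length class of cycles is partitioned into $\alpha$-pairs, the total number of $\tau$-cycles is even, giving the last assertion of the lemma.
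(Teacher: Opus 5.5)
Your proof is correct, but it takes a different route from the paper's. The paper works with the union graph of the two matchings $\sigma^2$ and $\alpha$. Each component is an alternating cycle $v_0,\dots,v_{2\ell-1}$, and one reads off directly that $\tau$ acts on it as two $\ell$-cycles, one on the even-indexed vertices and one on the odd-indexed vertices.

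You instead use the dihedral relation $\alpha\tau\alpha=\tau^{-1}$ to let $\alpha$ act on the set of $\tau$-cycles, preserving length. You then rule out $\alpha$-invariant cycles by the reflection argument. The parity split is exactly where fixed-point-freeness enters: of $\alpha$ when $m$ is even, and of $\sigma^2$ when $m$ is odd. Your remark about taking $m$ mod $k$ is fine, since the case analysis applies to any integer representative.

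The two pairings coincide: $\alpha$ carries the even-indexed vertices of a component to the odd-indexed ones. Your version exhibits the partner cycle intrinsically as $\alpha(Z)$, which is the orientation reversal of a strand as in Remark~\ref{rem:single-vs-multi}. It does so by an algebraic argument valid for any product of two fixed-point-free involutions.

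The paper's version gives more structure for free. It identifies each pair of $\tau$-cycles with a connected component of $\sigma^2\cup\alpha$. That is what is reused in Theorem~\ref{thm:dictionary}(ii) ($\tau$ has two cycles iff $G^\ast$ is connected), in the proof of Theorem~\ref{thm:two-matchings}, and in the coset-type description of the Gelfand-pair reduction.

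To recover this from your argument you need one more line. The set $Z\cup\alpha(Z)$ is closed under $\alpha$ and under $\sigma^2=\tau\alpha$. It is also connected via the paths $h\mapsto\alpha h\mapsto\tau h$. Hence it is exactly one component.
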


\begin{proof}
Consider the graph on the half-edges whose edges are the transpositions of $\sigma^2$ and of $\alpha$. Both are fixed-point-free involutions, so every half-edge lies on exactly one edge of each kind, and each component of this graph is a cycle $v_0, v_1, \dots, v_{2\ell-1}$ alternating between the two kinds, say with $\alpha(v_{2i}) = v_{2i+1}$ and $\sigma^2(v_{2i+1}) = v_{2i+2}$, indices modulo $2\ell$. Then
\[
\tau(v_{2i}) = \sigma^2(v_{2i+1}) = v_{2i+2}, \qquad \tau(v_{2i+1}) = \sigma^2(v_{2i}) = v_{2i-1},
\]
so on each component $\tau$ acts as two $\ell$-cycles, one on the even-indexed and one on the odd-indexed half-edges. Hence every cycle length of $\tau$ occurs an even number of times.
\end{proof}

\subsection{A primer on the representation theory of $S_N$}\label{ssec:char-bkgd}

We provide a brief recollection of the facts about characters of $S_N$ that will be needed; more detailed exposition can be found in~\cite{MondalThesis}, and some standard references for the material are~\cite{Serre1977,Stanley1999}. Readers familiar with the Frobenius formalism may skip to Section~\ref{sec:encoding}.

\subsubsection*{Characters}

A representation of a finite group is a homomorphism $\rho : G \to GL(V)$, $V$ a finite-dimensional complex vector space, and its character is the class function $\chi(\pi) := \mathrm{tr}\,\rho(\pi)$, which determines $\rho$ up to isomorphism. The irreducible representations of $S_N$ are indexed by partitions $\lambda \vdash N$; we write $\chi^\lambda$ for the character of $\rho^\lambda$, $\chi^\lambda(K)$ for its common value on a conjugacy class $K$, and $\chi^\lambda(1) = \dim\rho^\lambda$. All characters of $S_N$ are real. Three irreducibles are distinguished: the \emph{trivial} $\rho^{(N)}$, with $\chi^{(N)} \equiv 1$; the \emph{sign} $\rho^{(1^N)}$, with $\chi^{(1^N)} = \sgn$; and the \emph{standard} representation $\rho^{(N-1,1)}$ on $\Vstd = \{x \in \mathbb{C}^N : \sum x_i = 0\}$, the complement of the trivial summand in the permutation representation $\mathbb{C}^N$, with $\chi^{(N-1,1)}(\pi) = \#\mathrm{fix}(\pi) - 1$.

\subsubsection*{The Frobenius formula}

Irreducible characters are orthogonal, $\sum_{\pi \in S_N} \chi^\lambda(\pi)\chi^\mu(\pi) = N!\,\delta_{\lambda\mu}$, so class functions expand in the basis $\{\chi^\lambda\}$ and convolution of class functions diagonalizes in it, exactly as Fourier series diagonalize convolution. Applied to the normalized indicator functions of two conjugacy classes, this yields the \textbf{Frobenius character formula}: for conjugacy classes $K_1, K_2 \subseteq S_N$ and any $h \in S_N$,
\begin{equation}\label{eq:frobenius}
\#\{(g_1,g_2) \in K_1 \times K_2 : g_1 g_2 = h\}
= \frac{|K_1|\,|K_2|}{N!} \sum_{\lambda \vdash N} \frac{\chi^\lambda(K_1)\,\chi^\lambda(K_2)\,\chi^\lambda(h)}{\chi^\lambda(1)}
\end{equation}
\cite[Ch.~7]{Serre1977}. Taking $K_1 = C_\sigma$, $K_2 = C_\alpha$ and $h \in C_N$ gives the labelled count $A_g$ defined at the end of Section~\ref{sec:encoding} after summing over $C_N$. As written this is a sum of $p(N) \sim e^{C\sqrt N}$ terms, but for an $N$-cycle almost all of them vanish.

\subsubsection*{Hooks and the Murnaghan--Nakayama collapse}

The \emph{hook partitions} are $\lambda = (N-k, 1^k)$, $k = 0,\dots,N-1$: a row of length $N-k$ atop a column of length $k+1$. There are $N$ of them, $k = 0$ being the trivial and $k = N-1$ the sign representation.

\begin{lemma}\label{lem:MN-Ncycle}
For an $N$-cycle $c \in S_N$, $\chi^\lambda(c) = 0$ unless $\lambda$ is a hook, in which case $\chi^{(N-k,1^k)}(c) = (-1)^k$.
\end{lemma}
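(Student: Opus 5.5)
The plan is to apply the Murnaghan--Nakayama rule to an $N$-cycle, which I take as the standard combinatorial evaluation of $\chi^\lambda$. The rule says that for a permutation of cycle type $\mu = (\mu_1,\dots,\mu_\ell)$,
\[
\chi^\lambda(\mu) \;=\; \sum_{\xi} (-1)^{\mathrm{ht}(\xi)} \chi^{\lambda\setminus\xi}(\mu_2,\dots,\mu_\ell).
\]
The sum runs over border strips (rim hooks) $\xi$ of size $\mu_1$ in $\lambda$. Here $\mathrm{ht}(\xi)$ is one less than the number of rows that $\xi$ occupies, and $\lambda\setminus\xi$ is the partition left after removing the strip.

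For $c$ an $N$-cycle the cycle type is $\mu = (N)$, with a single part. After removing a strip of size $N$ the remaining cycle type is empty, and $\chi^{\emptyset} = 1$. So $\chi^\lambda(c)$ is a sum of $(-1)^{\mathrm{ht}(\xi)}$ over the border strips $\xi$ of $\lambda$ whose size is $N = |\lambda|$.

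The key step is to show that $\lambda$ has such a strip only when $\lambda$ is a hook. A strip of size $|\lambda|$ must be the whole diagram. A border strip is connected and contains no $2\times 2$ square. Any partition that is not a hook contains the boxes $(1,1),(1,2),(2,1),(2,2)$, which form a $2\times 2$ square, so the whole diagram is not a border strip. Hence the sum is empty and $\chi^\lambda(c)=0$.

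If $\lambda = (N-k,1^k)$ is a hook, its diagram is connected and contains no $2\times 2$ square. It is therefore itself a border strip, and it is the unique one of size $N$. It occupies $k+1$ rows, so its height is $k$, which gives $\chi^{(N-k,1^k)}(c) = (-1)^k$.

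As a sanity check, $k=0$ gives the trivial character, equal to $1$. Taking $k=N-1$ gives $\sgn(c) = (-1)^{N-1}$, which is correct for an $N$-cycle. Taking $k=1$ gives $\#\mathrm{fix}(c)-1 = -1$, matching the formula for $\chi^{(N-1,1)}$ recalled above.

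The only real content is the rim-hook rule itself, which I cite from \cite{Stanley1999} rather than reprove. The main step I would write out carefully is that the whole diagram is a border strip if and only if $\lambda$ contains no $2\times2$ block, i.e.\ if and only if $\lambda$ is a hook.

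If one wanted to avoid Murnaghan--Nakayama, there is a fallback through symmetric functions. The power sum expands as $p_N = \sum_{k=0}^{N-1} (-1)^k s_{(N-k,1^k)}$, and the Frobenius characteristic map gives $p_N = \sum_\lambda \chi^\lambda((N))\,s_\lambda$. Comparing coefficients then yields the lemma directly.
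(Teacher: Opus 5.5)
Your proposal is correct and follows the same route as the paper: apply the Murnaghan--Nakayama rule, note that a border strip of size $N$ must be the whole diagram, observe that this is possible only when $\lambda$ has no $2\times2$ block (i.e.\ is a hook), and read off the sign $(-1)^k$ from the height. Your write-up simply spells out the details (the $(2,2)$ box in a non-hook, the height count) that the paper's short proof states in one line.
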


\begin{proof}
By the Murnaghan--Nakayama rule, $\chi^\lambda(c)$ is a signed count of ways to remove a border strip of length $N$ from $\lambda$ leaving the empty diagram. A connected strip of length $N$ with no $2\times2$ block is an L-shape, so $\lambda$ must be a hook $(N-k,1^k)$, with sign $(-1)^k$~\cite[Cor.~7.17.5]{Stanley1999}.
\end{proof}

Thus with $h$ an $N$-cycle the sum in \eqref{eq:frobenius} collapses from $p(N)$ partitions to $N$ hooks. The hook characters are explicit: $\chi^{(N-k,1^k)}$ is the character of the exterior power $\bigwedge^k\Vstd$, so
\begin{equation}\label{eq:hook-ext}
\dim\chi^{(N-k,1^k)} = \binom{N-1}{k},
\end{equation}
and since the eigenvalues of $\pi$ on $\mathbb{C}^N$ are the $\ell$th roots of unity for each $\ell$-cycle of $\pi$ (one eigenvalue $+1$ being discarded on passing to $\Vstd$), the hook character values have the generating function
\begin{equation}\label{eq:hook-genfn}
\sum_{k=0}^{N-1} \chi^{(N-k,1^k)}(\pi)\,x^k = \frac{1}{1+x}\prod_{\ell \geq 1}\bigl(1-(-x)^\ell\bigr)^{m_\ell(\pi)},
\end{equation}
where $m_\ell(\pi)$ is the number of $\ell$-cycles of $\pi$~\cite[Ex.~7.72]{Stanley1999}. Evaluating \eqref{eq:hook-genfn} at the cycle types of $\sigma$ and $\alpha$ gives all $N$ hook character values at once (Lemma~\ref{lem:hookvals}). In the resulting hook sum for $p_g = A_g/(|C_\sigma||C_\alpha|)$, the trivial and sign terms contribute exactly $2/N$, reproducing the parity heuristic of Lemma~\ref{lem:parity}, and the remaining $N-2$ hooks contribute $O(N^{-2})$ (See Section~\ref{sec:probability}).

\section{From curve systems to permutations}\label{sec:encoding}

This section builds the correspondence between minimally intersecting filling systems and pairs of permutations and proves Theorem~\ref{thmA}.

\subsection{The curve-system--ribbon graph dictionary}\label{ssec:dictionary}

We now explain precisely how a generic filling curve system $\Gamma$ on $\Sg$ determines a ribbon graph; see Figure~\ref{fig:dictionary}.

\begin{proposition}\label{prop:dictionary}
A generic filling curve system $\Gamma$ on $\Sg$ canonically determines a $4$-regular oriented ribbon graph $G_\Gamma$ on $\Sg$. The complementary faces of $G_\Gamma$ are in bijection with the components of $\Sg \setminus \Gamma$; in particular $G_\Gamma$ is unicellular if and only if $\Gamma$ has exactly one complementary disk.
\end{proposition}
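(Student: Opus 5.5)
The plan is to build $G_\Gamma$ directly from the image of $\Gamma$, viewed as an embedded graph on $\Sg$. Let $\Gamma$ be generic and filling, and let $P \subset \Sg$ be its finite set of transverse double points. Since $\Gamma$ fills, no component of $\Gamma$ is disjoint from the others and free of self-intersections, because a simple closed curve alone cannot have disk complement when $g \geq 2$. Hence every component passes through at least one point of $P$. Take $P$ as the vertex set and the closures of the components of $\Gamma \setminus P$ as the edges; each edge is an arc of some $\gamma_i$ joining two (possibly equal) double points.

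Next we define the half-edges and the two permutations. At each $p \in P$, choose a small embedded disk $D_p$ meeting $\Gamma$ in two transverse arcs through $p$; then $\partial D_p \cap \Gamma$ consists of exactly four points, and we let these be the half-edges at $p$. Let $H$ be the union of these four-point sets over all $p$. Define $\sigma$ to send each point of $\partial D_p \cap \Gamma$ to the next one in the counterclockwise order on $\partial D_p$ induced by the orientation of $\Sg$, so that each $\sigma$-orbit has length $4$. Define $\alpha$ to swap the two endpoints of each arc of $\Gamma \setminus \bigcup_p \mathrm{int}\, D_p$. These arcs correspond bijectively to edges, and every half-edge is an endpoint of exactly one arc, so $\alpha$ is a fixed-point-free involution. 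Both constructions are canonical up to the choice of the disks $D_p$, and any two choices are isotopic. The triple $(H,\sigma,\alpha)$ is therefore well defined up to isomorphism in the sense of Definition~\ref{def:iso}, and it depends only on the isotopy class of $\Gamma$.

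It then remains to identify faces. The key step is that the topological realization $\Sigma(G_\Gamma)$ of \S\ref{ssec:ribbon} is recovered inside $\Sg$: the union of the $D_p$ and of thin rectangular neighbourhoods of the edge arcs is a regular neighbourhood $N(\Gamma)$, and it is homeomorphic, preserving orientation, to the ribbon surface of $G_\Gamma$ before capping. The vertex disks match, the cyclic orders match by the definition of $\sigma$, and the bands are untwisted because $\Sg$ is oriented. Each component of $\Sg \setminus \mathrm{int}\,N(\Gamma)$ deformation retracts onto the corresponding component of $\Sg \setminus \Gamma$, so each is a disk by the filling hypothesis. These disks are glued along the boundary circles of $N(\Gamma)$.

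Finally, we check that the boundary circles of $N(\Gamma)$ are exactly the $\varphi$-orbits. Walk along a boundary component: leave the vertex disk at a half-edge $h$, follow the band to $\alpha(h)$, then turn to the next half-edge $\sigma\alpha(h)$ along $\partial D_p$. This is the content of Remark~\ref{rem:geometric-meaning}. Consequently the faces of $G_\Gamma$ correspond bijectively to boundary circles, hence to capping disks, hence to components of $\Sg \setminus \Gamma$. It also follows that $\Sigma(G_\Gamma) \cong \Sg$, so $G_\Gamma$ has genus $g$, and unicellularity is equivalent to having one complementary disk.

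The main obstacle is the orientation and turning convention: we must verify that the side on which the boundary walk turns agrees with ``counterclockwise'' for $\sigma\alpha$ rather than for $\sigma^{-1}\alpha$. We would settle this with a local picture at one vertex. If the convention came out reversed, the fix is to replace $\sigma$ by $\sigma^{-1}$ consistently, which changes neither the count of faces nor the cycle types involved.
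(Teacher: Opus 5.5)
Your proposal is correct and follows essentially the same route as the paper: half-edges at the crossings with $\sigma$ the counterclockwise rotation, $\alpha$ pairing the two ends of each arc, and faces identified with the boundary walks of the regular neighbourhood $N(\Gamma)$ capped by the complementary disks, with your version adding detail the paper omits (every component meets a double point, the choice of the disks $D_p$). Two small corrections: the retraction goes the other way, since it is each component of $\Sg\setminus\Gamma$ that retracts onto the corresponding component of $\Sg\setminus\mathrm{int}\,N(\Gamma)$; and no change of convention is ever needed, because $\sigma^{-1}\alpha=\alpha(\sigma\alpha)^{-1}\alpha$, so the orbits of $\sigma^{-1}\alpha$ are exactly the $\alpha$-images of the orbits of $\sigma\alpha$, whereas replacing $\sigma$ by $\sigma^{-1}$ would break the canonical counterclockwise definition.
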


\begin{proof}
At each transverse double point (crossing) $p$ of $\Gamma$, four arc-germs (directed arc-ends) meet. Label these half-edges $a, b, c, d$ in the counterclockwise cyclic order determined by the orientation of $\Sg$, and define $\sigma$ locally by the $4$-cycle $(a\,b\,c\,d)$: this encodes the crossing as a $4$-valent vertex.

Each arc of $\Gamma$ connecting two crossings (or returning to the same one) contributes one half-edge at each of its two ends; the involution $\alpha$ pairs these. Since each arc-end belongs to exactly one arc, $\alpha$ is a fixed-point-free involution.

The resulting $G_\Gamma = (H, \sigma, \alpha)$ is $4$-regular, and its topological realization is homeomorphic to $\Sg$ (a tubular neighborhood of $\Gamma$ together with the complementary disks). The face orbits of $\varphi = \sigma\alpha$ trace the boundaries of the complementary regions of $\Gamma$, giving a bijection between faces and complementary disks.
\end{proof}

\begin{figure}[ht]
\centering
\begin{tikzpicture}[>=Stealth, line cap=round]
\begin{scope}
  \fill[gray!7] (-1.9,-1.55) rectangle (1.9,1.55);
  \draw[thick] (-1.75,-0.42) .. controls (-0.6,-0.12) and (0.6,0.12) .. (1.75,0.42);
  \draw[thick] (-0.32,-1.45) .. controls (-0.1,-0.5) and (0.1,0.5) .. (0.32,1.45);
  \fill (0,0) circle (1.6pt);
  \node at (0.24,-0.24) {$p$};
  \node at (0.95,0.45)  {$a$};
  \node at (-0.25,0.95) {$b$};
  \node at (-0.95,-0.47){$c$};
  \node at (0.27,-0.95) {$d$};
  \draw[->, gray!70!black] (20:0.62) arc (20:70:0.62);
  \node at (-1.55,1.25) {$\Gamma$};
  \node at (0,-1.95) {(a)};
\end{scope}
\begin{scope}[xshift=4.6cm]
  \draw[thick] (0.5,0) -- (2.4,0);
  \draw[thick] (0,0.5) -- (0,1.3);   \draw[thick, dashed] (0,1.3) -- (0,1.55);
  \draw[thick] (-0.5,0) -- (-1.3,0); \draw[thick, dashed] (-1.3,0) -- (-1.55,0);
  \draw[thick] (0,-0.5) -- (0,-1.3); \draw[thick, dashed] (0,-1.3) -- (0,-1.55);
  \draw[thick] (2.9,0.5) -- (2.9,1.3);   \draw[thick, dashed] (2.9,1.3) -- (2.9,1.55);
  \draw[thick] (3.4,0) -- (4.2,0);       \draw[thick, dashed] (4.2,0) -- (4.45,0);
  \draw[thick] (2.9,-0.5) -- (2.9,-1.3); \draw[thick, dashed] (2.9,-1.3) -- (2.9,-1.55);
  \foreach \x in {0,2.9} {\filldraw[fill=gray!7, draw=black] (\x,0) circle (0.5);
     \foreach \t in {0,90,180,270} \fill (\x,0) ++(\t:0.5) circle (1.5pt);}
  \node at (0.72,0.2) {$a$};
  \node at (0.22,0.72) {$b$};
  \node at (-0.72,0.2) {$c$};
  \node at (0.22,-0.72) {$d$};
  \draw[->, gray!70!black] (12:0.33) arc (12:78:0.33);
  \node[gray!70!black] at (-0.1,-0.08) {\footnotesize$\sigma$};
  \draw[<->, gray!70!black] (0.7,-0.32) .. controls (1.2,-0.62) and (1.7,-0.62) .. (2.2,-0.32);
  \node[gray!70!black] at (1.45,-0.78) {\footnotesize$\alpha$};
  \node at (2.05,0.24) {$\alpha(a)$};
  \node[anchor=west] at (3.0,-0.75) {$\varphi(a)$};
  \node[anchor=south west] at (3.42,0.02) {$\tau(a)$};
  \draw[->, gray!70!black] (2.9,0) ++(195:0.3) arc (195:255:0.3);
  \draw[densely dotted] (2.4,0) -- (3.4,0);
  \node at (1.45,-1.95) {(b)};
\end{scope}
\end{tikzpicture}
\caption{(a) A double point $p$ of $\Gamma$; the four arc-germs at $p$ are labelled $a,b,c,d$ in the counterclockwise order given by the orientation of $\Sg$. (b) The corresponding vertex of $G_\Gamma$, a disk with four marked boundary points in the same cyclic order, and the vertex at the other end of the arc through $a$. The permutation $\sigma$ rotates counterclockwise around a vertex and $\alpha$ exchanges the two ends of an edge; thus $\varphi(a) = \sigma\alpha(a)$ turns at the next vertex, while $\tau(a) = \sigma^2\alpha(a)$ passes straight through it (dotted).}
\label{fig:dictionary}
\end{figure}

\begin{remark}[Components are encoded by the Petrie permutation]\label{rem:single-vs-multi}
Whereas one step of $\varphi$ crosses an arc and turns at the next crossing (Remark~\ref{rem:geometric-meaning}), one step of $\tau = \sigma^2\alpha$ crosses an arc and passes straight through to the opposite half-edge (\S\ref{ssec:petrie}), i.e.\ follows a geometric strand of $\Gamma$ (Figure~\ref{fig:dictionary}(b)). Each oriented strand is therefore one $\tau$-orbit, reversing its orientation gives a second $\tau$-orbit of the same length (the pairing of Lemma~\ref{lem:tau-doubled}), and the number of unoriented components is $c(\Gamma) = \tfrac12\#\{\tau\text{-orbits}\}$. This number is determined by $G_\Gamma$ but is not visible from the cycle types of $\sigma$ and $\alpha$ alone; it is the basis of the single-curve criterion in Theorem~\ref{thm:dictionary}(ii).
\end{remark}

\begin{proposition}\label{prop:euler}
If $\Gamma$ is a generic filling curve system on $\Sg$ with $k$ double points and $b$ complementary disks, then $k = 2g - 2 + b$. In particular $\iota(\Gamma) \geq 2g - 1$, with equality if and only if $\Gamma$ is minimally intersecting.
\end{proposition}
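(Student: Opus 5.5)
The plan is to apply Euler's formula to the cell decomposition of $\Sg$ determined by $\Gamma$, using Proposition~\ref{prop:dictionary} to identify that decomposition with the $4$-regular ribbon graph $G_\Gamma$. Since $\Gamma$ is generic and filling, the crossings, the arcs of $\Gamma$ between crossings, and the complementary disks form a CW structure on $\Sg$, with $V = k$ vertices, $E$ edges and $F = b$ faces. There is one caveat: a component of $\Gamma$ with no crossings at all is not a cell complex in this sense. But such a component is a simple closed curve disjoint from the rest of $\Gamma$. Cutting along it leaves the complement non-disk, or for $c=1$ leaves the closed surface unfilled, because $g \ge 2$. So filling rules this case out, and every component passes through at least one crossing.

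Next I count edges. Each crossing is $4$-valent, so there are $4k$ half-edges, and $\alpha$ pairs them, giving $E = 2k$. Euler's formula then gives
\[
2 - 2g \;=\; \chi(\Sg) \;=\; V - E + F \;=\; k - 2k + b \;=\; b - k,
\]
that is, $k = 2g - 2 + b$. This step only needs the fact (Proposition~\ref{prop:dictionary}) that the realization of $G_\Gamma$ is $\Sg$ and that its faces are exactly the complementary disks.

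For the inequality I apply this to a generic representative realizing $\iota(\Gamma)$. Being filling depends only on the free homotopy class: a minimal-position representative of a filling system still fills. So such a representative has $b \geq 1$ complementary disks, and hence $\iota(\Gamma) = 2g - 2 + b \geq 2g - 1$. Equality holds exactly when $b = 1$, i.e.\ when the complement is a single disk, and this is what the introduction calls minimally intersecting.

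The main subtle point is justifying that filling passes to minimal position and that the decomposition really is cellular (no crossing-free components, faces are open disks). I would handle this by citing the standard fact that homotopic filling systems remain filling (e.g.\ via~\cite{Basmajian2013}). The Euler computation itself is routine.
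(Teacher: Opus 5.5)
Your argument is correct and is the paper's proof: Euler's formula for the cell structure given by $G_\Gamma$, with $V=k$, $E=2k$, $F=b$. Your observation that filling rules out crossing-free components is a worthwhile addition, since the paper uses it implicitly to get $V=k$ and $E=2k$.

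One caution. The ``standard fact'' that homotopic filling systems remain filling is false for arbitrary generic representatives: dragging an arc of a simple closed curve along a suitable path by a finger move gives a homotopic generic curve all of whose complementary regions are disks. So the inequality should rest on the paper's convention that filling is defined for $\Gamma$ in minimal position, which makes your step true by definition.
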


\begin{proof}
The ribbon graph $G_\Gamma$ has $V = k$ vertices, $E = 2k$ edges (each crossing contributes four half-edges, each edge using two), and $F = b$ faces. Euler's identity $V - E + F = 2 - 2g$ gives $k - 2k + b = 2 - 2g$, so $k = 2g - 2 + b$, and $b \geq 1$ is equivalent to filling.
\end{proof}

\subsection{The three-level dictionary}\label{ssec:threelevel}

The proof of Theorem~\ref{thm:main-asymptotic} passes through three equivalent descriptions of a minimally intersecting filling curve system. We record the full correspondence here; in the right-hand column, $N = 8g-4$ and all permutations act on the half-edge set $[N]$.

\begin{center}
\renewcommand{\arraystretch}{1.35}
\begin{tabular}{@{}>{\raggedright\arraybackslash}p{0.30\textwidth} >{\raggedright\arraybackslash}p{0.31\textwidth} >{\raggedright\arraybackslash}p{0.31\textwidth}@{}}
\toprule
\textbf{Topology of $\Gamma$ on $\Sg$} & \textbf{Ribbon graph $G_\Gamma$} & \textbf{Permutations on $[N]$} \\
\midrule
genus-$g$ surface $\Sg$ & realization $\Sigma(G)$ of genus $g$ & genus read from $V-E+F$ \\
crossing (double point) & $4$-valent vertex & $4$-cycle of $\sigma$; \ type $4^{2g-1}$ \\
arc between crossings & edge & transposition of $\alpha$; \ type $2^{4g-2}$ \\
complementary disk & face & cycle of $\varphi=\sigma\alpha$ \\
\emph{one} disk (minimal) & unicellular & $\varphi=\sigma\alpha\in C_N$ \\
oriented strand & directed zigzag path & cycle of $\tau=\sigma^2\alpha$ \\
curve component & unoriented zigzag (pair) & pair of $\tau$-cycles \\
\emph{one} component & single zigzag, length $4g-2$ & $\tau=\sigma^2\alpha\in C_{(N/2)^2}$ \\
$\MCG(\Sg)$-orbit of $\Gamma$ & isomorphism class of $G_\Gamma$ & simultaneous-conjugacy class of $(\sigma,\alpha)$ \\
\bottomrule
\end{tabular}
\end{center}

\noindent The two rows---unicellularity ($\varphi\in C_N$) and single-component ($\tau\in C_{(N/2)^2}$)---are the two defining conditions of problems (M) and (S) respectively. The last row is justified in the next subsection.

\subsection{Mapping class group orbits via ribbon graphs}\label{ssec:bijection}

A labelling of the half-edges by $[N] = \{1,\dots,N\}$ is an arbitrary choice of coordinates: a different labelling gives a different pair $(\sigma',\alpha')$ representing the same ribbon graph. Any bijection $\psi \in S_N$ relabels $(\sigma,\alpha)$ to $(\psi\sigma\psi^{-1}, \psi\alpha\psi^{-1})$, and two labelled pairs represent isomorphic ribbon graphs if and only if they are related by such a $\psi$ (Definition~\ref{def:iso}). Under the dictionary, the action of $\MCG(\Sg)$ on filling curve systems corresponds to this conjugation action. Indeed, an orientation-preserving homeomorphism $h$ taking $\Gamma_1$ to $\Gamma_2$ sends crossings to crossings and arcs to arcs, inducing $\psi : H_1 \to H_2$ that carries the cyclic order at each crossing to the cyclic order at its image, so $\psi\sigma_1\psi^{-1} = \sigma_2$, and carries each arc to an arc, so $\psi\alpha_1\psi^{-1} = \alpha_2$. Conversely, such a $\psi$ is exactly an isomorphism $G_{\Gamma_1} \to G_{\Gamma_2}$, which extends over the uniquely determined complementary disks to a homeomorphism $\Sg \to \Sg$. Hence two labelled pairs represent the same $\MCG$-orbit iff they are simultaneously conjugate in $S_N$. This is the topological half of the classification; Theorem~\ref{thm:dictionary} restates it in permutation language and adds the single-curve refinement.

\begin{proposition}\label{prop:bijection}
The assignment $\Gamma \mapsto G_\Gamma$ induces a bijection
\[
\left\{
\begin{tabular}{@{}c@{}}
minimally intersecting filling \\
curve systems on $\Sg$
\end{tabular}
\right\} \Big/ \MCG(\Sg)
\;\xrightarrow{\ \cong\ }\;
\left\{
\begin{tabular}{@{}c@{}}
unicellular oriented $4$-regular \\
ribbon graphs with $2g-1$ vertices
\end{tabular}
\right\} \Big/ {\cong}.
\]
\end{proposition}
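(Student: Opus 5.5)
I will prove Proposition~\ref{prop:bijection} by checking that the map is well defined, then injective, then surjective, using the discussion of \S\ref{ssec:bijection} for the first two.

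\textbf{Well-definedness.} A minimally intersecting filling system $\Gamma$ in minimal position is generic with $k=\iota(\Gamma)=2g-1$ double points. By Proposition~\ref{prop:euler} it has $b=1$ complementary disk, so Proposition~\ref{prop:dictionary} makes $G_\Gamma$ a unicellular $4$-regular ribbon graph with $2g-1$ vertices.

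The isomorphism class of $G_\Gamma$ must not depend on the choices made. A mapping class acting on $\Gamma$ induces an isomorphism of ribbon graphs, by the argument of \S\ref{ssec:bijection}. The remaining issue is the choice of minimal-position representative inside the free homotopy class. For this I will invoke the standard fact (Hass--Scott) that any two minimal-position representatives of a multicurve are related by an ambient isotopy composed with third-Reidemeister (triangle) moves. A triangle move would require a complementary triangular disk bounded by three arcs. Here the unique complementary region is a single disk with $N=8g-4>3$ sides, and its boundary has length $N$ as a $\varphi$-cycle, so no triangle move is available. Hence minimal-position representatives are isotopic, and $G_\Gamma$ is well defined up to isomorphism. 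I expect this to be the main obstacle, since it is the one place where a genuine result on curves on surfaces, rather than combinatorics, is needed. I would handle it by citing Hass--Scott together with the monogon/bigon criterion, rather than by reproving it.

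\textbf{Injectivity.} Suppose $G_{\Gamma_1}\cong G_{\Gamma_2}$ via $\psi$. Realize $\psi$ as a homeomorphism between regular neighborhoods of $\Gamma_1$ and $\Gamma_2$ that preserves the ribbon structures and carries strands to strands. Strands are preserved because $\psi$ conjugates $\tau=\sigma^2\alpha$, by Remark~\ref{rem:single-vs-multi}. Extend this homeomorphism over the complementary disks by the Alexander trick; the extension is orientation preserving because $\psi$ respects the cyclic order $\sigma$. The result is a mapping class taking $\Gamma_1$ to $\Gamma_2$.

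\textbf{Surjectivity.} Given a unicellular $4$-regular $G$ with $V=2g-1$, we have $E=4g-2$ and $F=1$, so $\chi=2-2g$ and $\Sigma(G)\cong\Sg$. Define strands as the $\tau$-orbits, going straight through each vertex. Each unoriented pair of $\tau$-orbits traces an immersed closed curve on $\Sigma(G)$ with transverse double points at the vertices. The resulting generic system $\Gamma$ has complement one disk, so it fills and has $2g-1$ double points. By the bound $\iota\ge 2g-1$ of Proposition~\ref{prop:euler}, it is already in minimal position, so it is minimally intersecting.

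It remains to check primitivity of each component. I would argue as follows. If a component $\gamma$ were homotopic to $\delta^m$ with $m\ge2$, then $\gamma$ would have a minimal-position representative with at least $m-1$ self-intersections concentrated near $\delta$. In that representative the complement of $\Gamma$ would contain a non-disk region or extra faces, contradicting $F=1$ after counting via Proposition~\ref{prop:euler}. I am less sure of this step, and if it fails I would simply record primitivity as part of the standing convention. Finally, $G_\Gamma\cong G$ by construction, which completes surjectivity.
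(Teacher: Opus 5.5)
Your injectivity argument and your surjectivity construction are the paper's: realize $\psi$ on regular neighbourhoods and cap off the single disk, and take the straight-through strands. Your well-definedness paragraph, via Hass--Scott and the absence of triangular faces, supplies a point the paper leaves implicit. The gap is in the two checks you add to surjectivity. These checks are exactly what places the constructed $\Gamma$ in the left-hand set, and this, rather than well-definedness, is where real topological input is needed.

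\emph{Minimal position.} The step ``$\Gamma$ has $2g-1$ crossings and $\iota\ge 2g-1$ by Proposition~\ref{prop:euler}, so $\Gamma$ is minimal'' is circular. The Euler count bounds crossings only for drawings whose complementary regions are disks. To bound $\iota(\Gamma)$ you must therefore already know that a \emph{minimal} representative of the class of $\Gamma$ fills. One drawing with disk complement does not give this: a filling curve together with a small null-homotopic circle around one of its crossings has only disk complementary regions, yet its minimal representatives do not fill.

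\emph{Primitivity.} Your sketch fails for a similar reason. $F=1$ is a property of the drawing you built, not of some other representative in which $\gamma$ sits near $\delta$, and a component in an annulus crossed by other strands contradicts nothing by itself. Nor can primitivity be adopted as a standing convention. It is a condition on the domain of the bijection, so surjectivity must verify it, including that no strand is null-homotopic. The paper's proof simply asserts that the straight-through system is a minimally intersecting filling system, so it does not settle these points either.

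One way to close both gaps is to pass to the universal cover. There $G$ lifts to a locally finite planar $4$-regular map $\widetilde\Gamma$ all of whose faces are $N$-gons. Suppose a disk $D$ is bounded by a cycle of $\widetilde\Gamma$ that goes straight through all but $c$ of its $V_b$ vertices. Let $V_i$ be the number of vertices inside $D$, and $h$ the number of half-edges entering $D$ from $\partial D$; each straight boundary vertex contributes exactly one, so $h\ge V_b-c$. Euler's formula for $D$, together with the half-edge and face-side counts, gives $V_b=N+(N-4)V_i+(\tfrac N2-1)h$. For $c\le 2$ and $N\ge 4$ this is impossible.

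Hence $\widetilde\Gamma$ contains no closed straight loops, monogons or bigons. Each lift of a strand is an embedded line, and distinct lifts meet at most once. The Hass--Scott criterion (no singular monogons or bigons) then puts the system in minimal position. No strand is null-homotopic, since its lift would be a closed path and would produce a straight loop or a monogon. No strand is homotopic to $\delta^m$ with $m\ge 2$ either. Otherwise $\tilde\gamma$ and $\delta\tilde\gamma$ would be distinct lifts, distinct because a strand crosses each edge only once, with the same endpoints at infinity. They would then have to meet infinitely often, by $\delta^m$-invariance. The alternative is that the lines $\delta^j\tilde\gamma$ are pairwise disjoint and cyclically permuted by $\delta$, which $\delta$ cannot do, since it preserves the order of disjoint lines with common endpoints.
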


\begin{proof}
\emph{Injectivity} is the argument above: an isomorphism $\psi : G_{\Gamma_1} \to G_{\Gamma_2}$ is realized by a homeomorphism of regular neighborhoods carrying $\Gamma_1$ to $\Gamma_2$, which extends across the unique complementary disk on each side. Since the cyclic ordering in $\sigma$ fixes a local surface orientation, this homeomorphism is orientation-preserving; accordingly $\mathcal{N}_g$ counts orientation-preserving $\MCG(\Sg)$-orbits.

\emph{Surjectivity.} A unicellular $4$-regular ribbon graph $G$ with $V = 2g-1$, $E = 4g-2$, $F = 1$ has $\Sigma(G) \cong \Sg$. The embedded $4$-valent graph determines a curve system by \emph{smoothing} each vertex according to its cyclic order: at a vertex with cyclic order $(a,b,c,d)$, declare the strands $\{a,c\}$ and $\{b,d\}$ to be the two passes of $\Gamma$ through the crossing. The result is a generic filling system with one complementary disk and $G_\Gamma = G$.

\emph{Finiteness.} The pairs $(\sigma,\alpha)$ on $[N]$ with prescribed cycle structures form a finite set, and isomorphism classes are orbits of a finite group action.
\end{proof}

\subsection{The permutation encoding}\label{ssec:encoding}

A unicellular $4$-regular ribbon graph with $V = 2g - 1$ vertices has $E = 4g - 2$ edges and $N := 4V = 8g - 4$ half-edges. Label the half-edges by $[N]$ and identify a labelled ribbon graph with the pair $(\sigma,\alpha) \in S_N \times S_N$ where
\begin{itemize}
\item $\alpha \in C_\alpha :=$ the class of $4g-2$ disjoint transpositions (type $2^{4g-2}$);
\item $\sigma \in C_\sigma :=$ the class of $2g-1$ disjoint $4$-cycles (type $4^{2g-1}$).
\end{itemize}
The face permutation is $\varphi = \sigma\alpha$ and the Petrie permutation is $\tau = \sigma^2\alpha$; unicellularity is the condition $\varphi \in C_N$ (Definition~\ref{def:ribbon}).

\begin{theorem}\label{thm:dictionary}\label{thm:dictionary-restate}
Let $g \geq 2$ and $N = 8g-4$. The dictionary $\Gamma \mapsto (\sigma,\alpha)$ induces:
\begin{enumerate}
\item[\textup{(i)}] a bijection between $\MCG(\Sg)$-orbits of minimally intersecting filling curve \emph{systems} on $\Sg$ and simultaneous-conjugacy classes of pairs $(\sigma,\alpha) \in C_\sigma \times C_\alpha$ with $\varphi = \sigma\alpha \in C_N$;
\item[\textup{(ii)}] under this bijection, the \emph{single} filling curves are exactly the pairs additionally satisfying $\tau = \sigma^2\alpha \in C_{(N/2)^2}$, equivalently those for which $G^\ast = (\sigma^2,\alpha)$ is connected.
\end{enumerate}
\end{theorem}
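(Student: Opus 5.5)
Part (i) comes from composing two correspondences already in hand. Proposition~\ref{prop:bijection} identifies $\MCG(\Sg)$-orbits of minimally intersecting filling systems with isomorphism classes of unicellular $4$-regular ribbon graphs with $2g-1$ vertices. It then suffices to identify the latter with simultaneous-conjugacy classes of admissible pairs. Labelling the $N=8g-4$ half-edges by $[N]$ turns such a ribbon graph into a pair $(\sigma,\alpha)$. Here $\sigma$ has cycle type $4^{2g-1}$ by $4$-regularity with $V=2g-1$, and $\alpha$ has type $2^{4g-2}$ because it is a fixed-point-free involution. Unicellularity is exactly $\sigma\alpha\in C_N$ (Definition~\ref{def:ribbon}). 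Conversely, any pair in $C_\sigma\times C_\alpha$ with $\sigma\alpha\in C_N$ is a ribbon graph of this kind. Its genus is $g$, since $V-E+F=(2g-1)-(4g-2)+1=2-2g$; note that a single face also forces connectivity. By Definition~\ref{def:iso}, changing the labelling is simultaneous conjugation by some $\psi\in S_N$, and two labelled pairs give isomorphic graphs iff they are so conjugate. This yields the bijection in (i).

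For (ii) I will use Remark~\ref{rem:single-vs-multi}: $\tau$-orbits are the oriented strands of $\Gamma$, and $c(\Gamma)=\tfrac12\#\{\tau\text{-orbits}\}$. This needs a careful justification that each strand of $\Gamma$ is a $\tau$-orbit. At a crossing with cyclic order $(a,b,c,d)$ the two passes are $\{a,c\}$ and $\{b,d\}$, which are exactly the orbits of $\sigma^2$; these are the same pairs used in the smoothing step of Proposition~\ref{prop:bijection}. So following an oriented strand means alternating $\alpha$ (along an arc) and $\sigma^2$ (straight through the crossing), i.e.\ iterating $\tau$ on the half-edges where the strand leaves or enters. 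Lemma~\ref{lem:tau-doubled} then shows the two orientations of a strand are distinct $\tau$-cycles of equal length. Hence $\Gamma$ is a single curve iff $\tau$ has exactly two cycles, both of length $N/2$, i.e.\ $\tau\in C_{(N/2)^2}$. The equal lengths come for free from the pairing, so "exactly two cycles" is the whole condition.

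For the equivalence with connectivity of $G^\ast=(\sigma^2,\alpha)$, I will use the proof of Lemma~\ref{lem:tau-doubled} directly. The components of the graph on $[N]$ with edges from $\sigma^2$ and $\alpha$ are alternating cycles. Each such cycle carries exactly two $\tau$-cycles, and these are the orbits of the group $\langle\sigma^2,\alpha\rangle$. So $G^\ast$ is connected iff there is one alternating cycle, iff $\tau$ has exactly two cycles. All these conditions are invariant under simultaneous conjugation, so (ii) is well defined on classes.

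I expect the main obstacle to be the strand/$\tau$-orbit identification together with well-definedness under homotopy. The count of components and the intersection number must be intrinsic to the orbit, whereas $G_\Gamma$ is built from a generic representative. I would handle this by working with minimal-position representatives: for a minimally intersecting filling system, the representative realizing $\iota(\Gamma)=2g-1$ is the one encoded, and Proposition~\ref{prop:euler} ensures it has one complementary disk. The number of components is a homotopy invariant of the multicurve, so it can be read off from any representative.
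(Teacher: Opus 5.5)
Your proposal is correct and follows the paper's proof essentially step for step. Part (i) composes Proposition~\ref{prop:bijection} with the labelling of half-edges, where relabelling is simultaneous conjugation. Part (ii) reads components off as orientation-reversal pairs of $\tau$-orbits via Remark~\ref{rem:single-vs-multi} and Lemma~\ref{lem:tau-doubled}, and identifies connectivity of $G^\ast$ with there being a single alternating $\sigma^2/\alpha$ cycle.

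Your extra checks fill in details the paper leaves to those earlier results: the genus and connectivity of the converse pair, and the transversality argument that the two passes at a crossing are the $\sigma^2$-orbits. One small wording slip: the orbits of $\langle\sigma^2,\alpha\rangle$ are the alternating cycles themselves, each the union of two $\tau$-cycles, not the $\tau$-cycles.
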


\begin{proof}
\emph{(i)} By Proposition~\ref{prop:bijection}, $\MCG$-orbits of systems biject with isomorphism classes of unicellular $4$-regular ribbon graphs on $2g-1$ vertices. In the labelling, a vertex is a $4$-cycle of $\sigma$, so the $2g-1$ vertices give $\sigma \in C_\sigma$; an edge is a transposition of $\alpha$, giving $\alpha \in C_\alpha$; and the single face (unicellularity) is $\varphi = \sigma\alpha \in C_N$ by Lemma~\ref{lem:unicellular}. Isomorphism of labelled ribbon graphs is simultaneous conjugacy (\S\ref{ssec:bijection}).

\emph{(ii)} By Remark~\ref{rem:single-vs-multi}, the unoriented components of $\Gamma$ are the orientation-reversal pairs of $\tau$-orbits, so $c(\Gamma) = \tfrac12\#\{\tau\text{-orbits}\}$. Both $\sigma^2$ and $\alpha$ are fixed-point-free involutions, so by Lemma~\ref{lem:tau-doubled} every $\tau$-orbit is paired with another of equal length; hence $c(\Gamma) = 1$ iff $\tau$ has exactly two orbits, necessarily each of length $N/2 = 4g-2$, i.e. $\tau \in C_{(N/2)^2}$. Finally $G^\ast = (\sigma^2,\alpha)$ has vertex permutation $\sigma^2$ of type $2^{4g-2}$, hence is $2$-regular with face permutation $\tau$; each connected component contributes two $\tau$-orbits, so $\tau$ has two cycles iff $G^\ast$ is connected.
\end{proof}

\begin{lemma}\label{lem:conjclass}
$\displaystyle |C_\alpha| = \frac{N!}{2^{4g-2}\,(4g-2)!}, \qquad |C_\sigma| = \frac{N!}{4^{2g-1}\,(2g-1)!}.$
\end{lemma}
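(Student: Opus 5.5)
The two formulas follow from the orbit--stabilizer theorem. $S_N$ acts transitively on each conjugacy class by conjugation, so $|C| = N!/|\Cent(\pi)|$ for any $\pi \in C$. It therefore suffices to compute the centralizer orders. For a permutation with $m_\ell$ cycles of length $\ell$, the standard formula gives
\[
|\Cent(\pi)| = \prod_\ell \ell^{m_\ell}\, m_\ell!.
\]
I would recall why this holds rather than cite it. An element commuting with $\pi$ must permute the $\ell$-cycles of $\pi$ among themselves, which gives a factor of $m_\ell!$. Having chosen that permutation of cycles, it must map each cycle onto its image cycle compatibly with the cyclic order. The only remaining freedom is a choice of rotation, namely $\ell$ options per cycle. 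Conversely, every such choice yields an element of the centralizer.

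Next I specialize to the two classes. For $\alpha$ of type $2^{4g-2}$ we have $\ell=2$ and $m_2 = 4g-2$, so $|\Cent(\alpha)| = 2^{4g-2}(4g-2)!$. For $\sigma$ of type $4^{2g-1}$ we have $\ell = 4$ and $m_4 = 2g-1$, so $|\Cent(\sigma)| = 4^{2g-1}(2g-1)!$. Dividing $N!$ by these orders gives the two displayed formulas.

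As a sanity check, one can also count directly. For $C_\alpha$, order the $N$ labels in a row and pair them off consecutively. Each matching arises exactly $2^{4g-2}(4g-2)!$ times: swapping within a pair gives the factor $2^{4g-2}$, and reordering the pairs gives the factor $(4g-2)!$. For $C_\sigma$, group the labels into consecutive blocks of four. Each block determines a $4$-cycle, and each permutation arises exactly $4^{2g-1}(2g-1)!$ times: the $4$ cyclic rotations within each block, together with the orderings of the blocks.

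There is no real obstacle here. The only point needing care is showing that the centralizer description is exhaustive, that is, that commuting elements preserve the cycle structure. This follows from $\psi\pi\psi^{-1} = \pi$, which says that $\psi$ maps each cycle $(a\,\pi(a)\,\pi^2(a)\cdots)$ to the cycle $(\psi(a)\,\pi\psi(a)\cdots)$.
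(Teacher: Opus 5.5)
Your proof is correct, and the route differs only in packaging from the paper's. Your main argument is orbit--stabilizer: $|C| = N!/|\Cent(\pi)|$, with the centralizer order $\prod_\ell \ell^{m_\ell}\,m_\ell!$ justified by noting that a commuting element permutes the $\ell$-cycles and is then fixed by one rotation per cycle.

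The paper's proof is exactly what you offer as a sanity check. It arranges $[N]$ in a row, cuts the row into consecutive pairs (resp.\ blocks of four), and divides $N!$ by the overcount $2^{4g-2}(4g-2)!$ (resp.\ $4^{2g-1}(2g-1)!$).

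The two are the same computation, since the number of arrangements producing a given permutation is precisely the order of its centralizer. Your version presents the result as the general class-size formula for an arbitrary cycle type, the form the paper itself uses later in Lemma~\ref{lem:class-size}. The paper's direct count is more elementary and avoids group actions altogether.
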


\begin{proof}
For $C_\alpha$: arrange the $N$ elements in a sequence ($N!$ ways) and form the first $4g-2$ adjacent pairs as transpositions; this overcounts by the $2^{4g-2}$ orderings within the pairs and the $(4g-2)!$ orderings of the pairs, giving $|C_\alpha| = N!/(2^{4g-2}(4g-2)!)$. For $C_\sigma$: similarly arrange $N$ elements ($N!$ ways) into $2g-1$ consecutive blocks of four read as $4$-cycles; this overcounts by the $4^{2g-1}$ cyclic rotations within the blocks and the $(2g-1)!$ orderings of the blocks, giving $|C_\sigma| = N!/(4^{2g-1}(2g-1)!)$.
\end{proof}

\begin{lemma}\label{lem:unicellular}
$(\sigma,\alpha)$ is unicellular iff $\varphi = \sigma\alpha$ is an $N$-cycle.
\end{lemma}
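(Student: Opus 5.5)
This is essentially definitional, so the plan is to unwind Definition~\ref{def:ribbon} and check the edge case of the label set. By that definition, the faces of $G=([N],\sigma,\alpha)$ are the orbits of $\varphi=\sigma\alpha$ on the half-edge set $H=[N]$, and $G$ is unicellular precisely when $\varphi$ has exactly one orbit. The orbits of a permutation of a finite set are its cycles, with fixed points counted as $1$-cycles. So "one orbit on $[N]$" means "a single cycle whose support is all of $[N]$", which is exactly an $N$-cycle, i.e.\ $\varphi\in C_N$. Conversely, an $N$-cycle has exactly one orbit, so $\varphi\in C_N$ gives unicellularity.

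The only point needing care is to count fixed points of $\varphi$ as faces, that is, as length-one orbits. Otherwise a permutation consisting of one long cycle plus some fixed points could wrongly be called unicellular. Counting them as faces matches Remark~\ref{rem:geometric-meaning}: a fixed point of $\varphi$ is a face of degree one, bounded by a loop edge.

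As a consistency check, I would compute the Euler characteristic with the data of \S\ref{ssec:encoding}. Here $V=2g-1$ and $E=4g-2$, so $F=1$ gives $\chi=V-E+F=2-2g$, which matches genus $g$ as in Proposition~\ref{prop:bijection}. More generally, $F$ equals the number of cycles of $\varphi$, so $F=1$ is equivalent to $\varphi\in C_N$.

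I expect no real obstacle. The lemma only makes explicit that "single face" in the topological sense of Proposition~\ref{prop:dictionary} is the same as "$N$-cycle" in the permutation encoding, and that this holds because every half-edge lies on some face.
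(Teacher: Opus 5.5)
Your proposal is correct and takes the same approach as the paper: both simply unwind the definition that faces are the $\varphi$-orbits on $[N]$, so a single face means $\varphi$ is one cycle with full support, i.e.\ $\varphi\in C_N$. Your remarks on fixed points as degree-one faces and the Euler-characteristic check are correct but not needed for the argument.
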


\begin{proof}
By Remark~\ref{rem:geometric-meaning} the faces are the orbits of $\varphi$, so one face means $\varphi \in C_N$.
\end{proof}

\paragraph{Strategy for the count.} By Theorem~\ref{thm:dictionary}(i) we must count $S_N$-conjugacy classes of pairs $(\sigma,\alpha) \in C_\sigma \times C_\alpha$ with $\sigma\alpha \in C_N$. The strategy is as follows:
\begin{enumerate}
\item \textbf{Count labelled pairs:} $A_g := \#\{(\sigma,\alpha) \in C_\sigma \times C_\alpha : \sigma\alpha \in C_N\}$ (Section~\ref{sec:probability}).
\item \textbf{Convert to conjugacy classes:} each class contributes $N!/|\Aut(G)|$ labelled pairs, so $A_g = \sum_{[G]} N!/|\Aut(G)|$.
\item \textbf{Correct for automorphisms:} since the classes with non-trivial automorphism group are negligible (Lemma~\ref{lem:asymmetry}), $A_g/N! \sim \mathcal{N}_g$ (Section~\ref{sec:asymptotic}).
\end{enumerate}

\section{The probability of unicellularity}\label{sec:probability}

We define the labelled unicellular pair count and probability as follows
\[
A_g := \#\{(\sigma,\alpha)\in C_\sigma\times C_\alpha : \sigma\alpha\in C_N\}, \qquad
p_g := \frac{A_g}{|C_\sigma||C_\alpha|} = \Pr_{\sigma,\alpha}[\sigma\alpha\in C_N].
\]

\begin{theorem}\label{thm:probability}
$\displaystyle p_g = \frac{2}{N} + O(N^{-2})$ as $g \to \infty$, with $N = 8g-4$.
\end{theorem}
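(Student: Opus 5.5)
By Theorem~\ref{thm:dictionary} and Lemma~\ref{lem:unicellular}, we need $A_g$ exactly. The plan is to apply the Frobenius formula \eqref{eq:frobenius} with $K_1=C_\sigma$, $K_2=C_\alpha$ and $h$ an $N$-cycle, and then sum over the $|C_N|=(N-1)!$ choices of $h$. Dividing by $|C_\sigma||C_\alpha|$ gives
\[
p_g=\frac{|C_N|}{N!}\sum_{\lambda\vdash N}\frac{\chi^\lambda(C_\sigma)\chi^\lambda(C_\alpha)\chi^\lambda(C_N)}{\chi^\lambda(1)}
=\frac1N\sum_{k=0}^{N-1}(-1)^k\frac{\chi^{(N-k,1^k)}(C_\sigma)\,\chi^{(N-k,1^k)}(C_\alpha)}{\binom{N-1}{k}} .
\]
The second equality uses Lemma~\ref{lem:MN-Ncycle} and \eqref{eq:hook-ext}. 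Write $V=2g-1$, which is odd, so $N=4V$.

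\textbf{Step 1: the two distinguished hooks.} The $k=0$ term is $1$. For $k=N-1$, the hook character is the sign character. We have $\sgn(\sigma)=(-1)^{3V}=-1$ because $V$ is odd, and $\sgn(\alpha)=+1$ because $\alpha$ is a product of $2V$ transpositions. Together with $(-1)^{N-1}=-1$, this term is also $+1$. These two terms therefore contribute exactly $2/N$.

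\textbf{Step 2: explicit hook values.} From \eqref{eq:hook-genfn}, the hook values of $\sigma$ are the coefficients of $(1-x^4)^V/(1+x)=(1-x)(1+x^2)(1-x^4)^{V-1}$. The hook values of $\alpha$ are the coefficients of $(1-x^2)^{2V}/(1+x)=(1-x)(1-x^2)^{2V-1}$. Hence
\[
|\chi^{(N-k,1^k)}(C_\sigma)|\le \binom{V-1}{\lfloor k/4\rfloor}, \qquad |\chi^{(N-k,1^k)}(C_\alpha)|\le\binom{2V-1}{\lfloor k/2\rfloor},
\]
since each coefficient is a single $\pm$ binomial. For example, at $k=1$ both values equal $-1$, so the $k=1$ term is $1/(N-1)$. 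Conjugating by the sign character gives $\chi^{\lambda'}=\sgn\cdot\chi^\lambda$, which sends $k$ to $N-1-k$. So the terms with $k$ near $N-1$ mirror those with $k$ near $0$, and it suffices to bound $k\le (N-1)/2$.

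\textbf{Step 3 (main obstacle): the remaining $N-2$ hooks contribute $O(N^{-1})$ inside the sum.} We need
\[
R:=\sum_{k=1}^{N-2}\frac{\binom{V-1}{\lfloor k/4\rfloor}\binom{2V-1}{\lfloor k/2\rfloor}}{\binom{4V-1}{k}}=O(1/N).
\]
My plan is a Vandermonde-type comparison. Write $j=\lfloor k/4\rfloor$. Choosing $4j$ elements from $4V$ split as blocks of sizes $V,V,2V$, with $j$, $j$, $2j$ taken from each block, gives $\binom{4V}{4j}\ge\binom{V}{j}^2\binom{2V}{2j}$. Hence the ratio is at most about $1/\binom{V}{j}$, up to bounded factors that account for the discrepancy between $k$ and $4j$ and between $V$ and $V-1$.

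This yields a sum dominated by small $j$. The terms $k=1,2,3$ are each $O(1/N)$ by direct computation. The $j=1$ block gives $O(V)\cdot O(V^2)/\binom{4V}{4}$-type bounds, which are $O(N^{-1})$. For $2\le j\le V/2$ the terms are $O(\binom{V}{j}^{-1})$, which sum to $O(N^{-2})$. I expect the bookkeeping of the off-by-one and off-by-$4$ factors, uniformly in $k$, to be the delicate part. If the crude comparison loses too much near $k\approx 4$, I would instead bound those few terms by hand. Combining Steps 1–3 gives $p_g=2/N+O(R/N)=2/N+O(N^{-2})$.
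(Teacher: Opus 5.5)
Your proposal is correct and follows the paper's proof essentially step for step: the Frobenius formula, the collapse to the $N$ hooks, hook values read off the generating function, the trivial and sign hooks contributing exactly $2/N$, and a Chu--Vandermonde comparison for the other $N-2$ hooks. There is one harmless sign slip, namely $T_1=(-1)\cdot(-1)\cdot(-1)/(N-1)=-1/(N-1)$ rather than $+1/(N-1)$.

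The only real difference is in the last step. The paper's split $\binom{8g-5}{4m+r}\ge\binom{2g-2}{m}\binom{4g-3}{2m+s}\binom{2g}{m+r-s}$ absorbs the off-by-$r$ offsets directly and gives $|T_k|\le 1/\binom{2g}{m+r-s}$ for every $k$. With your split, the correction factors are at most $1$ for $k\le N/2-1$ but are not uniformly bounded (near $k=N-5$ they are of order $V$). Your correct observation $T_{N-1-k}=T_k$ is what keeps your bookkeeping clean.
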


The proof occupies the rest of this section. A parity argument explains the leading term $2/N$ (Lemma~\ref{lem:parity}); the Frobenius formula then expresses $p_g$ as a sum over the $N$ hook characters, whose values on $C_\sigma$ and $C_\alpha$ are computed in Lemma~\ref{lem:hookvals}; the trivial and sign hooks reproduce $2/N$ exactly, and the remaining $N-2$ hooks contribute $O(N^{-2})$ (Lemma~\ref{lem:mid-bound}).

\begin{lemma}[Parity]\label{lem:parity}
For every $(\sigma,\alpha) \in C_\sigma \times C_\alpha$: $\sgn(\sigma) = -1$, $\sgn(\alpha) = +1$, $\sgn(\sigma\alpha) = -1$. Every $N$-cycle also has sign $-1$.
\end{lemma}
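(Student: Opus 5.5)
The plan is to compute each sign from cycle type, using the standard formula $\sgn(\pi) = (-1)^{N - \#\text{cycles}(\pi)}$. Equivalently, each $\ell$-cycle contributes $(-1)^{\ell-1}$, and the sign is multiplicative over disjoint cycles.

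\emph{Step 1: $\sigma$.} Take $\sigma \in C_\sigma$, which is a product of $2g-1$ disjoint $4$-cycles. A $4$-cycle is a product of three transpositions and so has sign $-1$. Hence
\[
\sgn(\sigma) = (-1)^{2g-1} = -1,
\]
since $2g-1$ is odd.

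\emph{Step 2: $\alpha$.} Take $\alpha \in C_\alpha$, which is a product of $4g-2$ disjoint transpositions. Since $4g-2$ is even,
\[
\sgn(\alpha) = (-1)^{4g-2} = +1.
\]

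\emph{Step 3: the product.} Because $\sgn$ is a homomorphism $S_N \to \{\pm 1\}$,
\[
\sgn(\sigma\alpha) = \sgn(\sigma)\,\sgn(\alpha) = -1.
\]

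\emph{Step 4: $N$-cycles.} An $N$-cycle is a product of $N-1$ transpositions, so its sign is $(-1)^{N-1}$. Here $N = 8g-4$ is even, so $N-1$ is odd and the sign is $-1$.

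There is no real obstacle in this argument. The only point requiring care is the parity bookkeeping: $2g-1$ is odd, $4g-2$ is even, and $N-1 = 8g-5$ is odd.

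I would close by recording the consequence used later. Every $\sigma\alpha$ lies in the odd coset $S_N \setminus A_N$, and this coset contains all of $C_N$. Since $|C_N| = (N-1)!$ and $|S_N \setminus A_N| = N!/2$, the density of $N$-cycles in the coset is $(N-1)!/(N!/2) = 2/N$. This is the heuristic leading term in Theorem~\ref{thm:probability}.
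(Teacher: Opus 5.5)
Your proof is correct and follows the paper's argument: you read the signs of $\sigma$ and $\alpha$ off their cycle types, use multiplicativity of $\sgn$ for $\sigma\alpha$, and get $(-1)^{N-1}=-1$ for the $N$-cycle since $N=8g-4$ is even. Your closing density computation $(N-1)!/(N!/2)=2/N$ is also exactly the heuristic the paper draws from this lemma.
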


\begin{proof}
A $4$-cycle has sign $-1$ and $\sigma$ is a product of $2g-1$ (odd) of them, so $\sgn(\sigma)=-1$; a transposition has sign $-1$ and $\alpha$ is a product of $4g-2$ (even) of them, so $\sgn(\alpha)=+1$; hence $\sgn(\sigma\alpha)=-1$. Finally $\sgn(c) = (-1)^{N-1} = -1$ since $N$ is even.
\end{proof}

The $(N-1)!$ many $N$-cycles all lie in the odd coset $S_N\setminus A_N$ of size $N!/2$, which by Lemma~\ref{lem:parity} also contains $\sigma\alpha$. If $\sigma\alpha$ were uniform on this coset, the probability of being an $N$-cycle would be $(N-1)!/(N!/2) = 2/N$, matching Theorem~\ref{thm:probability}. The character computation turns this heuristic into a theorem and bounds the error.

\begin{lemma}[Hook character values]\label{lem:hookvals}
For $0 \leq k \leq N-1$,
\begin{equation}\label{eq:chi-alpha-val}
\chi^{(N-k,1^k)}(\alpha) =
\begin{cases}
(-1)^{k/2}\binom{4g-3}{k/2}, & k\text{ even},\\
(-1)^{(k+1)/2}\binom{4g-3}{(k-1)/2}, & k\text{ odd},
\end{cases}
\end{equation}
and, writing $k = 4m+r$ with $r\in\{0,1,2,3\}$,
\begin{equation}\label{eq:chi-sigma-val}
\chi^{(N-k,1^k)}(\sigma) =
\begin{cases}
(-1)^m\binom{2g-2}{m}, & r\in\{0,2\},\\
(-1)^{m+1}\binom{2g-2}{m}, & r\in\{1,3\}.
\end{cases}
\end{equation}
\end{lemma}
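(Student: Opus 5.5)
The plan is to read off both formulas directly from the hook generating function \eqref{eq:hook-genfn}, evaluated at the cycle types of $\alpha$ and $\sigma$. No Murnaghan--Nakayama bookkeeping is needed beyond what \eqref{eq:hook-genfn} already encodes. In each case the work is to expand $\frac{1}{1+x}\prod_\ell (1-(-x)^\ell)^{m_\ell}$ as a polynomial in $x$ and extract coefficients.

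\emph{The involution $\alpha$.} Here $m_2 = 4g-2$ and all other $m_\ell$ vanish, so the generating function is $(1-x^2)^{4g-2}/(1+x)$. I will write $1-x^2 = (1-x)(1+x)$ and cancel one factor to get
\[
(1-x)(1-x^2)^{4g-3} = \sum_{j=0}^{4g-3} (-1)^j\binom{4g-3}{j}\bigl(x^{2j} - x^{2j+1}\bigr).
\]
The coefficient of $x^k$ is then $(-1)^{k/2}\binom{4g-3}{k/2}$ for $k$ even, and $-(-1)^{(k-1)/2}\binom{4g-3}{(k-1)/2} = (-1)^{(k+1)/2}\binom{4g-3}{(k-1)/2}$ for $k$ odd. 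This is \eqref{eq:chi-alpha-val}. The range $0\le k\le N-1$ is consistent, since the polynomial has degree $8g-5 = N-1$.

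\emph{The product of $4$-cycles $\sigma$.} Here $m_4 = 2g-1$, and $1-(-x)^4 = 1-x^4$, so the generating function is $(1-x^4)^{2g-1}/(1+x)$. I will factor $1-x^4 = (1+x)(1-x)(1+x^2)$ and cancel $1+x$ to obtain
\[
(1-x)(1+x^2)(1-x^4)^{2g-2}, \qquad (1-x)(1+x^2) = 1 - x + x^2 - x^3 .
\]
The second factor contributes $(-1)^m\binom{2g-2}{m}x^{4m}$. The four monomials $1,-x,x^2,-x^3$ occupy disjoint residues mod $4$, so for $k = 4m+r$ the coefficient is $(-1)^m\binom{2g-2}{m}$ times the sign of the monomial $x^r$. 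That sign is $+$ for $r\in\{0,2\}$ and $-$ for $r\in\{1,3\}$, which gives \eqref{eq:chi-sigma-val}.

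The only point needing care is to make sure \eqref{eq:hook-genfn} is applied with the right sign convention: the factor is $(1-(-x)^\ell)$, so for $\ell=2$ it is $1-x^2$ and for $\ell=4$ it is $1-x^4$. I would sanity-check this at $k=0$, where both formulas must give $1$ (trivial character), and at $k=1$, where the value must be $\#\mathrm{fix}-1=-1$ (standard character). The formulas pass both checks. Beyond this the argument is a routine coefficient extraction, and I expect no real obstacle.
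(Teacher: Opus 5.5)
Your proposal is correct and follows the paper's proof exactly: evaluate \eqref{eq:hook-genfn} at the cycle types $2^{4g-2}$ and $4^{2g-1}$, cancel the factor $1+x$ to get $(1-x)(1-x^2)^{4g-3}$ and $(1-x)(1+x^2)(1-x^4)^{2g-2}$, and read off the coefficient of $x^k$. Your explicit expansion $(1-x)(1+x^2)=1-x+x^2-x^3$ and the $k=0,1$ checks simply spell out the ``reading off'' step that the paper leaves to the reader.
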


\begin{proof}
From \eqref{eq:hook-genfn}, on $C_\alpha$ (type $2^{4g-2}$) the generating function of the hook character values is
\[
\sum_k \chi^{(N-k,1^k)}(\alpha)x^k = \frac{(1-x^2)^{4g-2}}{1+x} = (1-x)(1-x^2)^{4g-3},
\]
and on $C_\sigma$ (type $4^{2g-1}$), using $1-x^4=(1-x)(1+x)(1+x^2)$,
\[
\sum_k \chi^{(N-k,1^k)}(\sigma)x^k = \frac{(1-x^4)^{2g-1}}{1+x} = (1-x)(1+x^2)(1-x^4)^{2g-2}.
\]
Reading off the coefficient of $x^k$ in each gives \eqref{eq:chi-alpha-val} and \eqref{eq:chi-sigma-val}.
\end{proof}

The error term in Theorem~\ref{thm:probability} comes from the hooks with $1 \leq k \leq N-2$. Set
\[
T_k := \frac{(-1)^k\,\chi^{(N-k,1^k)}(\sigma)\,\chi^{(N-k,1^k)}(\alpha)}{\binom{N-1}{k}}, \qquad 0 \leq k \leq N-1.
\]
By Lemma~\ref{lem:hookvals}, writing $k = 4m+r$ and $s := \lfloor r/2\rfloor$ (so that $\lfloor k/2 \rfloor = 2m+s$),
\begin{equation}\label{eq:Tk-bound-num}
|T_k| \leq \frac{\binom{2g-2}{m}\binom{4g-3}{2m+s}}{\binom{8g-5}{4m+r}}.
\end{equation}

\begin{lemma}[Triple Chu--Vandermonde bound]\label{lem:vandermonde}
For $k = 4m+r$ with $r \in \{0,1,2,3\}$ and $0 \leq m \leq 2g-2$,
\[
\binom{8g-5}{4m+r} \geq \binom{2g-2}{m}\binom{4g-3}{2m+s}\binom{2g}{m+r-s}.
\]
Moreover, for $k \in \{1,\dots,N-2\}$ the index $n_k := m+r-s$ satisfies $1 \leq n_k \leq 2g-1$.
\end{lemma}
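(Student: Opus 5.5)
The plan is to apply the multinomial (triple) form of the Chu--Vandermonde identity to the decomposition $8g-5 = (2g-2) + (4g-3) + 2g$, and then drop all but one term. For nonnegative integers $a,b,c,n$ one has
\[
\binom{a+b+c}{n} \;=\; \sum_{i+j+l=n} \binom{a}{i}\binom{b}{j}\binom{c}{l}.
\]
This follows by comparing coefficients of $x^n$ in $(1+x)^{a+b+c} = (1+x)^a(1+x)^b(1+x)^c$. We take $a = 2g-2$, $b = 4g-3$, $c = 2g$ and $n = 4m+r$. Every summand is a product of nonnegative binomial coefficients, with the convention that $\binom{p}{q}=0$ when $q<0$ or $q>p$. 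Hence the left side dominates any single summand. We pick the triple
\[
(i,j,l) = (m,\; 2m+s,\; m+r-s).
\]
Its sum is $m + (2m+s) + (m+r-s) = 4m+r$, as required. This gives the displayed inequality.

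Out-of-range indices cause no trouble. If some index falls outside its range, the chosen summand is $0$ and the inequality is trivial. So no case analysis is needed for the first claim.

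For the ``moreover'' part, tabulate $r - s = r - \lfloor r/2\rfloor$: it equals $0,1,1,2$ for $r = 0,1,2,3$. Hence $n_k = m + (r-s) \geq m$, with $n_k = 0$ only when $m = 0$ and $r = 0$, that is, $k = 0$. Since $k \geq 1$, we get $n_k \geq 1$.

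For the upper bound, use $k \leq N-2 = 8g-6 = 4(2g-2)+2$. This forces $m \leq 2g-2$, consistent with the hypothesis. If $m \leq 2g-3$, then $n_k \leq 2g-3+2 = 2g-1$. If $m = 2g-2$, then $r \leq 2$, so $r-s \leq 1$ and $n_k \leq 2g-1$. In either case $1 \leq n_k \leq 2g-1$.

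There is no real obstacle here. The only point needing care is the bookkeeping that the three block sizes sum to exactly $8g-5 = N-1$, and that the index constraints at the extreme values $k = 1$ and $k = N-2$ are as claimed. The reason for recording $1 \le n_k \le 2g-1$ is that it guarantees $\binom{2g}{n_k} \geq 2g$. Combined with \eqref{eq:Tk-bound-num}, this yields $|T_k| \leq 1/\binom{2g}{n_k}$, which is what feeds the $O(N^{-2})$ estimate in Lemma~\ref{lem:mid-bound}.
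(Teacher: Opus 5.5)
Your proof is correct and is essentially the paper's: the paper applies Chu--Vandermonde twice, first splitting $8g-5=(2g-2)+(6g-3)$ and keeping the $j=m$ term, then splitting $6g-3=(4g-3)+2g$, which is exactly your single triple identity with the summand $(m,2m+s,m+r-s)$ selected. Your case split $m\le 2g-3$ versus $m=2g-2$ for the bound $1\le n_k\le 2g-1$ is slightly more careful than the paper's remark that the endpoint values are $1$ and $2g-1$ and ``intermediate values lie between,'' since $n_k$ is not monotone in $k$ (e.g.\ $n_3=2$ but $n_4=1$).
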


\begin{proof}
By Chu--Vandermonde, selecting the $j=m$ term,
\[
\binom{8g-5}{4m+r} = \sum_{j}\binom{2g-2}{j}\binom{6g-3}{4m+r-j} \geq \binom{2g-2}{m}\binom{6g-3}{3m+r},
\]
and again with $6g-3=(4g-3)+2g$, $3m+r=(2m+s)+(m+r-s)$,
\[
\binom{6g-3}{3m+r} \geq \binom{4g-3}{2m+s}\binom{2g}{m+r-s}.
\]
Since $r-s\in\{0,1,1,2\}$ all arguments are non-negative. At $k=1$, $n_k=1$; at $k=N-2$, $n_k=2g-1$; intermediate values lie between, so $\binom{2g}{n_k}\geq 2g>0$.
\end{proof}

\begin{lemma}\label{lem:mid-bound}
$\displaystyle \sum_{k=1}^{N-2}|T_k| = O(g^{-1})$.
\end{lemma}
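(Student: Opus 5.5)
Combining the pointwise bound \eqref{eq:Tk-bound-num} with the triple Chu--Vandermonde inequality of Lemma~\ref{lem:vandermonde}, the two binomials in the numerator cancel against the matching factors of the lower bound for the denominator. For each $k \in \{1,\dots,N-2\}$ this leaves
\[
|T_k| \;\leq\; \frac{1}{\binom{2g}{n_k}}, \qquad n_k = m+r-s \in \{1,\dots,2g-1\}.
\]
So the whole problem reduces to summing reciprocal binomial coefficients of a single row, $\binom{2g}{\cdot}$. The only care needed is to control how many values of $k$ share the same index $n_k$.

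For the multiplicity count, write $k = 4m+r$. As $r$ runs over $0,1,2,3$, the offset $r-s$ takes the values $0,1,1,2$, so $n_k \in \{m, m+1, m+1, m+2\}$. Hence a fixed value $n$ is attained by at most four values of $k$: one with $r=0$, two with $r\in\{1,2\}$, and one with $r=3$, coming from three consecutive values of $m$. This gives
\[
\sum_{k=1}^{N-2}|T_k| \;\leq\; 4\sum_{n=1}^{2g-1}\binom{2g}{n}^{-1}.
\]

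It remains to show $\sum_{n=1}^{2g-1}\binom{2g}{n}^{-1} = O(g^{-1})$, which is standard.
\begin{itemize}
\item The terms $n=1$ and $n=2g-1$ each equal $1/(2g)$.
\item The terms $n=2$ and $n=2g-2$ are $O(g^{-2})$.
\item For $3 \leq n \leq 2g-3$ we have $\binom{2g}{n} \geq \binom{2g}{3}$, which is of order $g^3$. There are fewer than $2g$ such terms, so together they contribute $O(g^{-2})$.
\end{itemize}
Hence the sum is $1/g + O(g^{-2}) = O(g^{-1})$, and therefore $\sum_{k=1}^{N-2}|T_k| = O(g^{-1})$.

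There is no real obstacle here. The one point to verify carefully is the bookkeeping at the boundary values of $k$ near $1$ and $N-2$ (equivalently $m=0$ and $m=2g-2$). There one must confirm that $n_k$ stays in $[1,2g-1]$, so that no term with $\binom{2g}{0}=1$ appears. Lemma~\ref{lem:vandermonde} already asserts this range.

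Note that this $O(g^{-1})$ bound, after the normalization by the prefactor, is what yields the $O(N^{-2})$ error in Theorem~\ref{thm:probability}. The prefactor $|C_\sigma||C_\alpha|/N!$, multiplied by the number $(N-1)!$ of $N$-cycles and divided by $|C_\sigma||C_\alpha|$, contributes the factor $1/N$. The trivial and sign terms then give $T_0+T_{N-1}=2$, which produces the leading term $2/N$.
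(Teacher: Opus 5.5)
Your proof is correct and is the paper's argument: combine \eqref{eq:Tk-bound-num} with Lemma~\ref{lem:vandermonde} to get $|T_k|\le \binom{2g}{n_k}^{-1}$, observe that each $n$ is hit by at most four values of $k$, and bound $4\sum_{n=1}^{2g-1}\binom{2g}{n}^{-1}$ by its two end terms $1/(2g)$ plus a smaller interior. Your explicit multiplicity check and the bound $\binom{2g}{n}\ge\binom{2g}{3}$ for $3\le n\le 2g-3$ fill in details the paper states only briefly.
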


\begin{proof}
By \eqref{eq:Tk-bound-num} and Lemma~\ref{lem:vandermonde}, $|T_k| \leq 1/\binom{2g}{n_k}$ with $n_k\in\{1,\dots,2g-1\}$, and each $n$ is hit by at most four values of $k$. Thus
\[
\sum_{k=1}^{N-2}|T_k| \leq 4\sum_{n=1}^{2g-1}\frac{1}{\binom{2g}{n}} = \frac{8}{2g} + \sum_{n=2}^{2g-2}\frac{4}{\binom{2g}{n}} = O(g^{-1}),
\]
since interior terms are $O(g^{-2})$.
\end{proof}

\begin{proof}[Proof of Theorem~\ref{thm:probability}]
Specializing \eqref{eq:frobenius} to $K_1 = C_\sigma$, $K_2 = C_\alpha$, $h\in C_N$, summing over $C_N$, and dividing by $|C_\sigma||C_\alpha|$ gives (using $|C_N|/N! = 1/N$)
\begin{equation}\label{eq:p-frobenius}
p_g = \frac{1}{N}\sum_{\lambda \vdash N} \frac{\chi^\lambda(C_\sigma)\,\chi^\lambda(C_\alpha)\,\chi^\lambda(C_N)}{\chi^\lambda(1)}.
\end{equation}
By Lemma~\ref{lem:MN-Ncycle} only hooks survive, with $\chi^{(N-k,1^k)}(C_N) = (-1)^k$, and $\chi^{(N-k,1^k)}(1) = \binom{N-1}{k}$ by \eqref{eq:hook-ext}; hence
\begin{equation}\label{eq:hook-sum}
p_g = \frac{1}{N}\sum_{k=0}^{N-1} \frac{(-1)^k\,\chi^{(N-k,1^k)}(C_\sigma)\,\chi^{(N-k,1^k)}(C_\alpha)}{\binom{N-1}{k}} = \frac{1}{N}\sum_{k=0}^{N-1} T_k.
\end{equation}
The $k=0$ term is $T_0 = 1$ ($\chi^{(N)}\equiv1$). For $k = N-1$ (sign), \eqref{eq:chi-sigma-val} with $m = 2g-2$, $r=3$ gives $\chi^{(1^N)}(\sigma) = -1$, and \eqref{eq:chi-alpha-val} with $k$ odd gives $\chi^{(1^N)}(\alpha) = +1$, matching the signs in Lemma~\ref{lem:parity}; since $(-1)^{N-1} = -1$, also $T_{N-1} = 1$. These two terms contribute exactly $2/N$ to \eqref{eq:hook-sum}, the heuristic value, and by Lemma~\ref{lem:mid-bound} the remaining terms contribute
\[
\frac{1}{N}\sum_{k=1}^{N-2} T_k = N^{-1}\,O(g^{-1}) = O(N^{-2}). \qedhere
\]
\end{proof}

\begin{remark}
The exact $A_g$ has closed forms via Goupil--Schaeffer~\cite{GoupilSchaeffer1998} or Chapuy's recursion~\cite{Chapuy2011}; both give the same $p_g = 2/N + O(N^{-2})$. Lemma~\ref{lem:mid-bound} establishes this directly.
\end{remark}

\begin{corollary}\label{cor:Ag-asymp}
$\displaystyle A_g \sim \frac{2}{N}\cdot\frac{N!}{2^{4g-2}(4g-2)!}\cdot\frac{N!}{4^{2g-1}(2g-1)!}$ as $g\to\infty$.
\end{corollary}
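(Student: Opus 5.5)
The corollary follows immediately from the definition $p_g = A_g/(|C_\sigma||C_\alpha|)$, which we rewrite as $A_g = p_g\,|C_\sigma|\,|C_\alpha|$. By Lemma~\ref{lem:conjclass} the two class sizes are exactly $N!/(2^{4g-2}(4g-2)!)$ and $N!/(4^{2g-1}(2g-1)!)$, so the only asymptotic input is the behaviour of $p_g$ itself.

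From Theorem~\ref{thm:probability} we have $p_g = \tfrac{2}{N} + O(N^{-2})$, which can be written as $p_g = \tfrac{2}{N}\bigl(1 + O(N^{-1})\bigr)$. Since $N = 8g-4 \to \infty$ as $g \to \infty$, this gives $p_g \big/ (2/N) \to 1$. Multiplying by the exact factor $|C_\sigma||C_\alpha|$ preserves the asymptotic equivalence, which yields the stated formula.

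There is no real obstacle here. The only point worth checking is that the error term in Theorem~\ref{thm:probability} is genuinely of smaller order than the main term $2/N$. It is, by the factor $O(N^{-1})$ supplied by Lemma~\ref{lem:mid-bound}, and this is what justifies the relation $\sim$ rather than just an order-of-magnitude statement. In fact the argument gives the slightly sharper statement $A_g = \tfrac{2}{N}|C_\sigma||C_\alpha|\,(1+O(1/g))$, and it is convenient to record this form for the later Stirling computation in Section~\ref{sec:asymptotic}.
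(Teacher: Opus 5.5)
Your proof is correct and is exactly the argument implicit in the paper, which states this corollary without a separate proof: write $A_g = p_g\,|C_\sigma|\,|C_\alpha|$, insert the exact class sizes from Lemma~\ref{lem:conjclass}, and use Theorem~\ref{thm:probability} in the form $p_g = \tfrac{2}{N}\bigl(1+O(N^{-1})\bigr)$. The sharper statement you record, with relative error $O(1/g)$, is also valid.
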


\section{Asymptotic enumeration}\label{sec:asymptotic}

By Theorem~\ref{thm:dictionary}(i), $\mathcal{N}_g$ is the number of orbits of $S_N$ acting by simultaneous conjugation on the labelled unicellular pairs, and the stabilizer of $(\sigma,\alpha)$ is $\Aut(G)$. By orbit-stabilizer,
\begin{equation}\label{eq:orbit-aut}
A_g = \sum_{[G]} \frac{N!}{|\Aut(G)|},
\end{equation}
the sum running over isomorphism classes of unicellular $4$-regular ribbon graphs with $2g-1$ vertices. Since every term is at most $N!$,
\begin{equation}\label{eq:easy-lower}
\frac{A_g}{N!} \;=\; \sum_{[G]} \frac{1}{|\Aut(G)|} \;\leq\; \mathcal{N}_g,
\end{equation}
and the defect is carried entirely by the classes with non-trivial automorphism group:
\begin{equation}\label{eq:defect}
\mathcal{N}_g - \frac{A_g}{N!} \;=\; \sum_{[G]:\,\Aut(G)\neq\{e\}}\Bigl(1 - \frac{1}{|\Aut(G)|}\Bigr) \;\leq\; \mathcal{U}_g^{\mathrm{sym}},
\end{equation}
where $\mathcal{U}_g^{\mathrm{sym}}$ denotes the number of such classes. Theorem~\ref{thmB} therefore follows from Corollary~\ref{cor:Ag-asymp} once $\mathcal{U}_g^{\mathrm{sym}}$ is shown to be negligible against $A_g/N!$. This is Lemma~\ref{lem:asymmetry} in \S\ref{ssec:rooting}; \S\ref{ssec:proof-main} assembles the proof.

\subsection{Automorphisms of unicellular maps}\label{ssec:rooting}

\begin{lemma}[Automorphism structure]\label{lem:aut-cyclic}
Let $G = (\sigma,\alpha)$ be unicellular with $\varphi = \sigma\alpha \in C_N$. Then $\Aut(G)$ is a subgroup of $\langle\varphi\rangle \cong \mathbb{Z}/N$, hence cyclic of some order $d \mid N$ and generated by $\varphi^{N/d}$; every non-identity automorphism acts on the half-edge set $H = [N]$ without fixed points.
\end{lemma}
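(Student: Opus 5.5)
An automorphism $\psi\in\Aut(G)$ is by Definition~\ref{def:iso} a permutation of $H=[N]$ commuting with both $\sigma$ and $\alpha$. The plan is to show first that $\psi$ commutes with $\varphi$ and lies in $\langle\varphi\rangle$, and then to read off the remaining conclusions from the structure of cyclic groups generated by an $N$-cycle.

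\emph{Step 1 (centralizer).} Since $\psi$ commutes with $\sigma$ and $\alpha$, it commutes with $\varphi=\sigma\alpha$, so $\Aut(G)\subseteq \Cent_{S_N}(\varphi)$. For an $N$-cycle this centralizer is exactly $\langle\varphi\rangle\cong\mathbb{Z}/N$. The argument: if $\psi\varphi=\varphi\psi$ and $\psi(h_0)=\varphi^j(h_0)$ for a fixed $h_0$, then $\psi(\varphi^i h_0)=\varphi^i\psi(h_0)=\varphi^{i+j}h_0$. Since $\varphi$ is transitive, every half-edge has the form $\varphi^i h_0$, so $\psi=\varphi^j$. Alternatively one can simply count: $|\Cent(\varphi)|=N!/|C_N|=N$, and $\langle\varphi\rangle$ already has $N$ elements.

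\emph{Step 2 (cyclic structure).} A subgroup of the cyclic group $\langle\varphi\rangle$ of order $N$ is cyclic. Its order $d$ divides $N$, and the unique subgroup of order $d$ is generated by $\varphi^{N/d}$.

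\emph{Step 3 (freeness).} Each non-identity element is $\varphi^j$ with $0<j<N$. Because $\varphi$ is a single $N$-cycle, $\varphi^j(h)=h$ would force $N\mid j$, which is impossible. Hence every non-identity automorphism is fixed-point-free on $H$.

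Equivalently, Step 3 says that an automorphism fixing one half-edge is trivial, which is the usual rigidity of rooted maps. There is no real obstacle in this argument. The only point requiring care is Step 1's use of transitivity of $\varphi$, which is exactly where unicellularity is used.
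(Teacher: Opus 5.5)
Your proof is correct and follows the paper's argument: automorphisms commute with $\varphi=\sigma\alpha$, the centralizer of an $N$-cycle is $\langle\varphi\rangle$, and non-identity powers of an $N$-cycle have no fixed points. You also spell out the centralizer computation and the subgroup structure of a cyclic group, which the paper takes as standard.
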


\begin{proof}
An automorphism commutes with $\sigma$ and $\alpha$, hence with $\varphi$, and the centralizer of an $N$-cycle in $S_N$ is $\langle\varphi\rangle$. Non-identity powers of an $N$-cycle have no fixed points.
\end{proof}

Let $\mathcal{R}_g$ be the number of \emph{rooted} unicellular $4$-regular ribbon graphs with $2g-1$ vertices, i.e.\ with a distinguished half-edge.

\begin{lemma}\label{lem:rooted}
$\displaystyle \mathcal{R}_g = \frac{A_g}{(N-1)!} = N\sum_{[G]}\frac{1}{|\Aut(G)|}$, and each class $[G]$ accounts for exactly $N/|\Aut(G)|$ rooted graphs.
\end{lemma}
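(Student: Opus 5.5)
The plan is to run orbit--stabilizer on rooted labelled objects, using the structure of $\Aut(G)$ from Lemma~\ref{lem:aut-cyclic}. I take a rooted unicellular ribbon graph to be an isomorphism class of triples $(\sigma,\alpha,h)$ with $h\in[N]$. Two triples are identified when some $\psi\in S_N$ conjugates $\sigma$ and $\alpha$ simultaneously and sends the root $h$ to $h'$.

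For the per-class statement, I fix a class $[G]$ and a labelled representative $(\sigma,\alpha)$. Its rootings are the $N$ half-edges $h$. Two rootings $h$ and $h'$ of the same labelled pair are isomorphic exactly when some $\psi\in\Aut(G)$ satisfies $\psi(h)=h'$, since any isomorphism between rootings of the same pair must fix $(\sigma,\alpha)$. So the rooted graphs arising from $[G]$ correspond to the $\Aut(G)$-orbits on $[N]$. By Lemma~\ref{lem:aut-cyclic}, every non-identity automorphism acts without fixed points, so $\Aut(G)$ acts freely. Each orbit therefore has size $|\Aut(G)|$, and there are exactly $N/|\Aut(G)|$ orbits. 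Summing over classes gives
\[
\mathcal{R}_g = N\sum_{[G]} \frac{1}{|\Aut(G)|}.
\]

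For the middle expression, I combine this with \eqref{eq:orbit-aut}, which states $\sum_{[G]} 1/|\Aut(G)| = A_g/N!$. Hence $\mathcal{R}_g = N\cdot A_g/N! = A_g/(N-1)!$.

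As a cross-check, I would also count directly. The group $S_N$ acts on the $N\cdot A_g$ labelled rooted triples, and this action is free: a $\psi$ fixing $(\sigma,\alpha,h)$ lies in $\Aut(G)$ and fixes $h$, so $\psi=e$. The number of orbits is then $N A_g/N!$, which agrees with the formula above.

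The only step needing care is freeness. It rests entirely on the fixed-point-freeness of powers of the $N$-cycle $\varphi$, which Lemma~\ref{lem:aut-cyclic} already supplies, so I expect no real obstacle.
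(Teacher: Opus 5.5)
Your proof is correct and uses the same ingredients as the paper: freeness of the $\Aut(G)$-action on half-edges (Lemma~\ref{lem:aut-cyclic}) for the per-class count $N/|\Aut(G)|$, and \eqref{eq:orbit-aut} to connect $\sum_{[G]}1/|\Aut(G)|$ with $A_g$. The only difference is the order: the paper first gets $\mathcal{R}_g = A_g/(N-1)!$ from the free action of the root stabilizer $S_{N-1}$ on labelled pairs rooted at $1$, which is equivalent to your cross-check with $S_N$ acting on triples, whereas you derive that equality from the per-class count.
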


\begin{proof}
Root at the half-edge $1$. The relabelling action of $S_{N-1}$ (the permutations fixing $1$) on rooted labelled pairs is free: a relabelling stabilizing $(\sigma,\alpha)$ lies in $\Aut(G) \leq \langle\varphi\rangle$ and fixes $1$ only if it is the identity (Lemma~\ref{lem:aut-cyclic}). Hence $\mathcal{R}_g = A_g/(N-1)!$, and the second equality is \eqref{eq:orbit-aut}. Rooted graphs with underlying graph $G$ correspond to $\Aut(G)$-orbits on $H$, and this action is free by Lemma~\ref{lem:aut-cyclic}, so there are $N/|\Aut(G)|$ of them.
\end{proof}

The proof that symmetric maps are rare has three ingredients: the quotient of a map by an automorphism of prime order (Lemma~\ref{lem:quotient}), and two general estimates in $S_n$ (Lemmas~\ref{lem:factor-bound} and~\ref{lem:class-size}). For a conjugacy class $T$ of $S_n$ with $m_\ell$ cycles of length $\ell$ we write $v(T) = \sum_\ell m_\ell$ for its number of cycles, and likewise $v(\pi)$ for a permutation $\pi$.

\begin{lemma}[Quotient by a prime-order automorphism]\label{lem:quotient}
Let $p \mid N$ be prime, $\bar N := N/p$, $\varphi_0 := (1\,2\cdots N)$ and $c_0 := \varphi_0^{\bar N}$, whose orbits $B_i := \{i,\, i+\bar N,\, i+2\bar N,\dots\}$, $1 \leq i \leq \bar N$, we call \emph{blocks}. Let $\mathcal{M}_p$ be the set of $\sigma \in C_\sigma$ that commute with $c_0$ and satisfy $\alpha := \sigma^{-1}\varphi_0 \in C_\alpha$. Then:
\begin{enumerate}
\item[\textup{(i)}] each $\sigma \in \mathcal{M}_p$ permutes the blocks, inducing $\bar\sigma \in S_{\bar N}$; likewise $\alpha$ induces an involution $\bar\alpha$, and $\bar\sigma\bar\alpha = (1\,2\cdots\bar N)$;
\item[\textup{(ii)}] every cycle of $\bar\sigma$ has length $4$, or length $2$ when $p = 2$;
\item[\textup{(iii)}] each $\bar\sigma$ arises from at most $p^{\,3(2g-1)/p} = e^{O(N)}$ elements of $\mathcal{M}_p$.
\end{enumerate}
\end{lemma}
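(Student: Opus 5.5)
The plan is to exploit that everything in sight commutes with $c_0$, so the block structure is preserved, and then to study how $\sigma$ restricts to individual blocks. Throughout, $\langle c_0\rangle\cong\mathbb{Z}/p$ acts freely and transitively on each block $B_i$, so a permutation of a block commuting with $c_0$ is a power of $c_0|_{B_i}$. Likewise, a $c_0$-equivariant bijection $B_i\to B_j$ is determined by the image of a single point, which gives $p$ choices.

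\emph{(i).} Since $\sigma$ commutes with $c_0$, it carries $c_0$-orbits to $c_0$-orbits, so it permutes the blocks. The same holds for $\varphi_0$, because $c_0$ is a power of it. It then holds for $\alpha=\sigma^{-1}\varphi_0$, which lies in the centralizer $Z$ of $c_0$. The map $Z\to S_{\bar N}$ recording the action on blocks is a homomorphism. Hence $\bar\alpha^2=\overline{\alpha^2}=1$ and $\bar\sigma\bar\alpha=\bar\varphi_0$. Finally, $\varphi_0$ sends $B_i$ to $B_{i+1}$ (indices mod $\bar N$), so $\bar\varphi_0=(1\,2\cdots\bar N)$.

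\emph{(ii).} From $\sigma^4=1$ we get $\bar\sigma^4=1$, so every cycle of $\bar\sigma$ has length $1$, $2$ or $4$.
\begin{itemize}
\item Suppose $\bar\sigma$ fixes a block $B$. Then $\sigma|_B$ commutes with the $p$-cycle $c_0|_B$, so it is a power of $c_0|_B$, and its order divides both $p$ and $4$. It is nontrivial, since $\sigma$ has no fixed points. So $p=2$ and $\sigma|_B$ is a transposition, which contradicts $\sigma$ having only $4$-cycles.
\item Suppose $\bar\sigma$ has a $2$-cycle $\{B,B'\}$. Then $\sigma^2$ preserves $B$ and commutes with $c_0$, so $\sigma^2|_B$ is a power of $c_0|_B$ of order dividing $2$ and $p$. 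It is nontrivial, because $\sigma^2$ is fixed-point-free. Hence $p=2$.
\end{itemize}

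\emph{(iii).} I will bound the number of lifts $\sigma\in Z\cap C_\sigma$ inducing a given $\bar\sigma$, ignoring the constraint on $\alpha$ (this only overcounts). Take a cycle $B_{j_1}\to B_{j_2}\to B_{j_3}\to B_{j_4}\to B_{j_1}$ of $\bar\sigma$.
\begin{itemize}
\item Fix a base point in each block, identifying each block $c_0$-equivariantly with $\mathbb{Z}/p$. Then each restriction $\sigma:B_{j_t}\to B_{j_{t+1}}$ is a translation by some $a_t\in\mathbb{Z}/p$.
\item The lift restricted to these four blocks is then determined by $(a_1,\dots,a_4)$. Since $\sigma^4|_{B_{j_1}}$ is translation by $a_1+a_2+a_3+a_4$, the condition $\sigma^4=1$ forces this sum to be $0$. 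That leaves at most $p^3$ choices per $4$-cycle.
\item There are $\bar N/4=(2g-1)/p$ such cycles, giving $p^{3(2g-1)/p}$.
\end{itemize}
When $p=2$, $\bar\sigma$ may also have $2$-cycles. For these, the condition that $\sigma^2|_B$ is the nontrivial translation leaves $2$ choices out of $2^2$. If all of $\bar\sigma$ consists of $2$-cycles, this gives $2^{\bar N/2}=2^{2g-1}$ lifts. For a mixed $\bar\sigma$, the per-block exponent from $2$-cycles ($1/2$) is below that from $4$-cycles ($3/4$). So in every case the count is at most $2^{3(2g-1)/2}$, which is $p^{3(2g-1)/p}$ for $p=2$.

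The estimate $e^{O(N)}$ follows because $\log p/p\le 1$.

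The main obstacle is making (iii) precise in the mixed $p=2$ case, and phrasing the equivariant-bijection count cleanly; the rest is bookkeeping.
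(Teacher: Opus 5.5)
Your proposal is correct and follows essentially the paper's proof: (i) via the block-action homomorphism on $Z=\Cent_{S_N}(c_0)$, and (iii) via the same block-shift parametrization with one linear constraint per $\bar\sigma$-cycle, ending with the same estimate $3k+f\le\tfrac32(2g-1)$ when $p=2$. The only difference is in (ii): you combine $\bar\sigma^4=1$ with the fact that the centralizer of $c_0|_B$ in $\mathrm{Sym}(B)$ is $\langle c_0|_B\rangle$, whereas the paper reads (ii) off from the cycle type $\ell^p$ or $(\ell p)^1$ of $\sigma$ over a length-$\ell$ cycle of blocks, which gives (ii) and (iii) in one computation.
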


\begin{proof}
(i) Both $\sigma$ and $\alpha = \sigma^{-1}\varphi_0$ commute with $c_0$, so they lie in $Z := \Cent_{S_N}(c_0) \cong \mathbb{Z}_p \wr S_{\bar N}$ and permute the blocks; let $\pi : Z \to S_{\bar N}$ be the induced action. Since $\varphi_0(B_i) = B_{i+1}$, $\pi(\varphi_0) = (1\,2\cdots\bar N)$; as $\pi$ is a homomorphism, $\bar\sigma\bar\alpha = \pi(\sigma\alpha) = \pi(\varphi_0)$, and $\alpha^2 = 1$ gives $\bar\alpha^2 = 1$.

(ii), (iii) Identify each block with $\mathbb{Z}_p$ so that $c_0$ acts by $j \mapsto j+1$. An element of $Z$ over $\bar\sigma$ is then determined by the shifts $\theta_1,\dots,\theta_{\bar N} \in \mathbb{Z}_p$ it induces on the blocks. On a $\bar\sigma$-cycle $(B_{i_1}\cdots B_{i_\ell})$ one has $\sigma^\ell|_{B_{i_1}} = c_0^{\,m}|_{B_{i_1}}$ with $m = \theta_{i_1}+\dots+\theta_{i_\ell}$, so on these $\ell p$ half-edges $\sigma$ has cycle type $\ell^{\,p}$ if $m = 0$ and $(\ell p)^1$ if $m \neq 0$, since $c_0^{\,m}$ is then a $p$-cycle. As $\sigma$ has only $4$-cycles, every $\bar\sigma$-cycle has $\ell = 4$ and $m = 0$, or $p = 2$, $\ell = 2$ and $m \neq 0$. This proves (ii), and it prescribes, for each $\bar\sigma$-cycle, the value of a linear form in that cycle's coordinates; distinct cycles involve disjoint coordinates, so at most $p^{\,\bar N - v(\bar\sigma)}$ elements of $Z$ over $\bar\sigma$ lie in $C_\sigma$. Finally $\bar N - v(\bar\sigma) \leq 3(2g-1)/p$: for $p$ odd $\bar\sigma$ has type $4^{(2g-1)/p}$, and for $p = 2$ type $4^k2^f$ with $2k+f = 2g-1$, so that $\bar N - v(\bar\sigma) = 3k+f \leq \tfrac32(2g-1)$.
\end{proof}

\begin{lemma}[Factorizations of a long cycle]\label{lem:factor-bound}
Let $T, U$ be conjugacy classes of $S_n$ and let $\gamma$ be an $n$-cycle. Then
\[
\#\{x \in T : x^{-1}\gamma \in U\} \;\leq\; \frac{|T|\,|U|}{n!}\,2^{\,n-1}.
\]
\end{lemma}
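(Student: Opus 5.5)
The plan is to apply the Frobenius formula \eqref{eq:frobenius} with $K_1 = T$, $K_2 = U$ and $h = \gamma$. The count $\#\{x \in T : x^{-1}\gamma \in U\}$ is the number of factorizations $\gamma = x y$ with $x \in T$, $y \in U$, so
\[
\#\{x \in T : x^{-1}\gamma \in U\} = \frac{|T|\,|U|}{n!}\sum_{\lambda\vdash n}\frac{\chi^\lambda(T)\,\chi^\lambda(U)\,\chi^\lambda(\gamma)}{\chi^\lambda(1)}.
\]
By Lemma~\ref{lem:MN-Ncycle} only the $n$ hooks $\lambda = (n-k,1^k)$ survive, each with $\chi^\lambda(\gamma) = (-1)^k$, and \eqref{eq:hook-ext} gives $\chi^\lambda(1) = \binom{n-1}{k}$. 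It therefore suffices to show
\[
\sum_{k=0}^{n-1}\frac{|\chi^{(n-k,1^k)}(T)|\,|\chi^{(n-k,1^k)}(U)|}{\binom{n-1}{k}} \;\le\; 2^{n-1}.
\]

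The key step is a uniform bound on a single hook character. Every irreducible character satisfies $|\chi^\lambda(\pi)| \le \chi^\lambda(1)$, since $\rho(\pi)$ is unitary and so its trace is a sum of $\dim\rho$ roots of unity. Applying this to the factor $\chi^\lambda(U)$ gives
\[
\frac{|\chi^{(n-k,1^k)}(T)|\,|\chi^{(n-k,1^k)}(U)|}{\binom{n-1}{k}} \;\le\; |\chi^{(n-k,1^k)}(T)| \;\le\; \binom{n-1}{k}.
\]
Summing over $k$ then yields $\sum_{k}\binom{n-1}{k} = 2^{n-1}$, which is exactly the bound claimed.

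There is essentially no obstacle here. The only point requiring care is the reduction to hooks: Lemma~\ref{lem:MN-Ncycle} kills every non-hook term exactly, so no estimate is needed for the $p(n)$ non-hook partitions. If a sharper bound were wanted, I would instead use \eqref{eq:hook-genfn}. The absolute values of the coefficients of $\frac{1}{1+x}\prod_\ell(1-(-x)^\ell)^{m_\ell}$ are bounded by those of $(1+x)^{n-v(T)}$-type products, which gives $\sum_k|\chi^{(n-k,1^k)}(T)| \le 2^{\,n-v(T)}$. That refinement is probably what later applications with few cycles will use, but the crude bound above already proves the lemma as stated.
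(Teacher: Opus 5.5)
Your proof is correct and is the same as the paper's: the Frobenius formula, the collapse to the $n$ hooks via Lemma~\ref{lem:MN-Ncycle}, and the termwise bound $|\chi^\lambda(\pi)|\le\chi^\lambda(1)=\binom{n-1}{k}$, summed to $2^{n-1}$. One caution that does not affect the proof: your aside $\sum_k|\chi^{(n-k,1^k)}(T)|\le 2^{\,n-v(T)}$ is false (at the identity class the left side is $2^{n-1}$ and the right side is $1$), whereas absorbing the factor $1/(1+x)$ into one cycle factor of \eqref{eq:hook-genfn} gives the valid bound $\ell\,2^{\,v(T)-1}$, where $\ell$ is the length of that cycle.
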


\begin{proof}
The left side counts pairs $(x,y) \in T \times U$ with $xy = \gamma$. By the Frobenius formula~\eqref{eq:frobenius} and Lemma~\ref{lem:MN-Ncycle} (with $N$ replaced by $n$) it equals $\frac{|T||U|}{n!}\sum_{j=0}^{n-1}(-1)^j\chi^{(n-j,1^j)}(T)\,\chi^{(n-j,1^j)}(U)\big/\binom{n-1}{j}$, and since $|\chi^{(n-j,1^j)}| \leq \dim\chi^{(n-j,1^j)} = \binom{n-1}{j}$ by \eqref{eq:hook-ext}, each term is at most $\binom{n-1}{j}$; summing gives $2^{n-1}$.
\end{proof}

\begin{lemma}[Classes with short cycles are small]\label{lem:class-size}
Let $L \geq 1$ be fixed and let $T$ be a conjugacy class of $S_n$ all of whose cycles have length at most $L$. Then
\[
|T| \;\leq\; \exp\!\Bigl(\bigl(1-\tfrac1L\bigr)\,n\log n + O_L(n)\Bigr).
\]
\end{lemma}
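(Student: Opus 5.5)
We will use the standard formula for the size of a conjugacy class. If $T$ has $m_\ell$ cycles of length $\ell$, then
\[
|T| = \frac{n!}{\prod_\ell \ell^{m_\ell}\, m_\ell!}.
\]
The plan is to bound the denominator from below using only the constraint $\sum_\ell \ell m_\ell = n$ with $\ell \le L$.

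\textbf{Step 1: discard the powers of $\ell$.} Since $\prod_\ell \ell^{m_\ell}\ge 1$, we have
\[
|T|\le \frac{n!}{\prod_{\ell\le L} m_\ell!}.
\]
By Stirling, $\log n! = n\log n - n + O(\log n)$ and $\log m! \ge m\log m - m$, with the convention $0\log 0=0$.

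\textbf{Step 2: reduce to a convex minimization.} Write $m_\ell = x_\ell n$. Then $\sum_\ell \ell x_\ell = 1$, and
\[
\sum_\ell \log m_\ell! \;\ge\; \sum_\ell m_\ell\log n + n\sum_\ell x_\ell\log x_\ell - n\sum_\ell x_\ell.
\]
The entropy-type term $\sum_\ell x_\ell\log x_\ell$ is bounded below by $-\log L \cdot \sum_\ell x_\ell \ge -L\log L$ (there are at most $L$ indices and each $x_\ell\le 1$). It is therefore $O_L(1)$, contributing $O_L(n)$; the term $n\sum_\ell x_\ell \le n$ is $O(n)$ as well.

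\textbf{Step 3: the key inequality.} The total number of cycles is $v(T)=\sum_\ell m_\ell$. Since every cycle has length at most $L$,
\[
n=\sum_\ell \ell m_\ell\le L\, v(T), \qquad\text{so}\qquad v(T)\ge n/L.
\]
Combining with Step 2,
\[
\log|T|\le n\log n - v(T)\log n + O_L(n)\le \Bigl(1-\tfrac1L\Bigr)n\log n + O_L(n).
\]

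The only point needing care is handling the $m_\ell=0$ terms and making the constants uniform over all classes $T$. Both are routine because there are at most $L$ cycle lengths. An alternative that avoids Stirling on each $m_\ell$ separately: since $\prod_\ell m_\ell! \ge \lfloor v/L\rfloor!^{\,\cdot}$-type bounds follow from log-convexity of the factorial, one can use $\prod_{\ell\le L} m_\ell!\ge (\lfloor v/L\rfloor!)^{L}$. This gives $\log\prod_\ell m_\ell!\ge v\log v - O_L(n)$, and then $v\log v\ge v\log n - v\log L$ because $v\ge n/L$, which yields the same estimate.
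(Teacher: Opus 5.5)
Your proof is correct and follows essentially the same route as the paper: drop $\prod_\ell \ell^{m_\ell}$, apply $\log m!\ge m\log m-m$, use convexity over the at most $L$ cycle lengths, and finish with $v(T)\ge n/L$. The only difference is cosmetic: you keep $v(T)\log n$ and substitute $v(T)\ge n/L$ at the end, whereas the paper inserts it into $\sum_\ell m_\ell\log m_\ell\ge v\log(v/L)$.

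One small slip in Step 2: the inequality $\sum_\ell x_\ell\log x_\ell\ge -\log L\cdot\sum_\ell x_\ell$ is false in general (e.g.\ $L=2$, $x_1=x_2=1/3$). The $O_L(1)$ bound you actually need follows at once from $x\log x\ge -1/e$, which gives $\sum_\ell x_\ell\log x_\ell\ge -L/e$.
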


\begin{proof}
$|T| = n!\big/\prod_\ell \ell^{m_\ell}\,m_\ell! \leq n!\big/\prod_\ell m_\ell!$. The $v = v(T) \geq n/L$ cycles are spread over at most $L$ lengths, so by $\log m! \geq m\log m - m$ and convexity of $x\log x$,
\[
\sum_\ell \log m_\ell! \;\geq\; \sum_\ell m_\ell\log m_\ell - v \;\geq\; v\log\tfrac{v}{L} - v \;\geq\; \tfrac{n}{L}\log n - O_L(n).
\]
With $\log n! \leq n\log n$ this gives the claim.
\end{proof}

\begin{lemma}[Symmetric maps are rare]\label{lem:asymmetry}
For every $K > 0$,
\[
\mathcal{U}_g^{\mathrm{sym}} \;\leq\; \exp\!\Bigl(-\tfrac18\,N\log N + O(N)\Bigr)\cdot\frac{A_g}{N!} \;=\; O_K\bigl(\mathcal{N}_g\,g^{-K}\bigr) \qquad (g\to\infty).
\]
\end{lemma}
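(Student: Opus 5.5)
Every class $[G]$ with $\Aut(G)\neq\{e\}$ has an automorphism of some prime order $p\mid N$, and we charge $[G]$ to such a $p$. By Lemma~\ref{lem:aut-cyclic} that automorphism is a power of $\varphi$ of order $p$. Fix the labelling in which $\varphi=\varphi_0=(1\,2\cdots N)$. Every isomorphism class with face permutation an $N$-cycle has such a representative $(\sigma,\sigma^{-1}\varphi_0)$, and the automorphism is then $c_0^{\,j}$ for some $j$. Since $\langle c_0\rangle$ has order $p$, commuting with $c_0^{\,j}$ is the same as commuting with $c_0$. So every symmetric class is represented by some $\sigma\in\mathcal M_p$, which gives
\[
\mathcal U_g^{\mathrm{sym}}\le\sum_{p\mid N}|\mathcal M_p|.
\]
There are $O(\log N)$ primes dividing $N$, so it suffices to bound each $|\mathcal M_p|$ by $\exp(-\tfrac18N\log N+O(N))\cdot A_g/N!$.

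For fixed $p$, Lemma~\ref{lem:quotient}(iii) gives $|\mathcal M_p|\le e^{O(N)}\cdot\#\{\bar\sigma\}$. By Lemma~\ref{lem:quotient}(i),(ii), $\bar\sigma$ ranges over factorizations $\bar\sigma\bar\alpha=(1\cdots\bar N)$. Here $\bar\sigma$ lies in a class $T$ of $S_{\bar N}$ with all cycles of length $\le4$, and $\bar\alpha$ is an involution, so it lies in one of $O(\bar N)$ classes $U$ with cycles of length $\le2$. There are also at most $O(N)$ possible types $T$. Lemma~\ref{lem:factor-bound} bounds each pair $(T,U)$ by $|T||U|2^{\bar N-1}/\bar N!$. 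Lemma~\ref{lem:class-size} gives $|T|\le\exp(\tfrac34\bar N\log\bar N+O(\bar N))$ and $|U|\le\exp(\tfrac12\bar N\log\bar N+O(\bar N))$. Hence
\[
\#\{\bar\sigma\}\le\exp\bigl(\tfrac14\bar N\log\bar N+O(N)\bigr)\le\exp\bigl(\tfrac18N\log N+O(N)\bigr),
\]
using $\bar N\le N/2$.

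Corollary~\ref{cor:Ag-asymp} and Stirling give
\[
\frac{A_g}{N!}=\frac{N!}{N\,2^{4g-2}(4g-2)!\,4^{2g-1}(2g-1)!}\,(2+o(1)).
\]
Its logarithm is $N\log N-\tfrac N2\log N-\tfrac N4\log N+O(N)=\tfrac14N\log N+O(N)$. Therefore
\[
|\mathcal M_p|\le\exp\bigl(\tfrac18N\log N-\tfrac14N\log N+O(N)\bigr)\frac{A_g}{N!}=\exp\bigl(-\tfrac18N\log N+O(N)\bigr)\frac{A_g}{N!}.
\]
Summing over the $O(\log N)$ primes only changes the $O(N)$ term, which proves the first inequality. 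The second follows from \eqref{eq:easy-lower}, $A_g/N!\le\mathcal N_g$, together with the fact that $e^{-\frac18N\log N+O(N)}$ is smaller than any power of $g$.

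The main obstacle is the exponent bookkeeping. One must check that the loss from Lemma~\ref{lem:factor-bound} (the factor $2^{\bar N}$) and from Lemma~\ref{lem:quotient}(iii) (the factor $p^{O(N/p)}$) is genuinely $e^{O(N)}$ uniformly in $p$. One must also confirm that the worst case is $p=2$, where $\bar N=N/2$ and $\bar\sigma$ may contain $2$-cycles. For such $\bar\sigma$ the bound $L=4$ still holds, and it yields the stated $\tfrac18$.
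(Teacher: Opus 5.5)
Your proposal is correct and follows essentially the same route as the paper: reduce to $\sum_{p\mid N}|\mathcal M_p|$ with $\varphi=\varphi_0$, bound $|\mathcal M_p|$ by combining Lemma~\ref{lem:quotient} with Lemmas~\ref{lem:factor-bound} and~\ref{lem:class-size}, and compare with $\log(A_g/N!)=\tfrac14N\log N+O(N)$. The differences are cosmetic: you inject symmetric classes directly into $\mathcal M_p$ instead of counting labelled pairs with weight $(N-1)!$, and you use $\bar N\le N/2$ uniformly instead of tracking the exponent $-\tfrac N4(1-\tfrac1p)\log N$ and then taking the worst case $p=2$.
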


\begin{proof}
\emph{Reduction to a fixed cycle and a prime.}
By Lemma~\ref{lem:aut-cyclic}, a symmetric class has $|\Aut(G)| = d$ with $1 < d \mid N$; it contributes $N!/d \geq (N-1)!$ labelled pairs to \eqref{eq:orbit-aut}, and its automorphism group contains $\varphi^{N/p}$ for every prime $p \mid d$. A union bound over primes, and conjugation invariance to fix $\varphi = \varphi_0$, give
\[
(N-1)!\,\mathcal{U}_g^{\mathrm{sym}} \;\leq\; \sum_{p \mid N}\#\bigl\{(\sigma,\alpha) \in C_\sigma\times C_\alpha : \sigma\alpha \in C_N,\ (\sigma\alpha)^{N/p} \in \Aut(\sigma,\alpha)\bigr\} \;=\; (N-1)!\sum_{p \mid N} M_p,
\]
where $M_p := |\mathcal{M}_p|$ in the notation of Lemma~\ref{lem:quotient}: indeed $\alpha = \sigma^{-1}\varphi_0$ is determined by $\sigma$, and $c_0$, which commutes with $\varphi_0$, commutes with $\alpha$ iff it commutes with $\sigma$.

\emph{Counting $\mathcal{M}_p$.}
By Lemma~\ref{lem:quotient}, each $\sigma \in \mathcal{M}_p$ determines a quotient $\bar\sigma \in S_{\bar N}$ whose cycles have length $2$ or $4$ and for which $\bar\sigma^{-1}(1\,2\cdots\bar N)$ is an involution, and at most $e^{O(N)}$ elements of $\mathcal{M}_p$ share a given $\bar\sigma$. Sorting the possible $\bar\sigma$ by the conjugacy classes $T \ni \bar\sigma$ and $U \ni \bar\sigma^{-1}(1\,2\cdots\bar N)$---there are $O(\bar N^2)$ such pairs $(T,U)$---Lemmas~\ref{lem:factor-bound} and~\ref{lem:class-size} (with $L = 4$ for $T$ and $L = 2$ for $U$) and $\log\bar N! = \bar N\log\bar N + O(\bar N)$ give
\[
\#\{\bar\sigma\} \;\leq\; \sum_{(T,U)} \frac{|T|\,|U|}{\bar N!}\,2^{\bar N-1} \;\leq\; \exp\!\Bigl(\bigl(\tfrac34+\tfrac12-1\bigr)\bar N\log\bar N + O(\bar N)\Bigr),
\]
hence $M_p \leq \exp\bigl(\tfrac{\bar N}{4}\log\bar N + O(\bar N)\bigr) \leq \exp\bigl(\tfrac{N}{4p}\log N + O(N)\bigr)$.

\emph{Comparison with $A_g/N!$.}
By Corollary~\ref{cor:Ag-asymp} and Stirling, $\log(A_g/N!) = \tfrac14 N\log N + O(N)$, so
\[
\log\frac{M_p}{A_g/N!} \;\leq\; -\frac N4\Bigl(1-\frac1p\Bigr)\log N + O(N) \;\leq\; -\frac N8\log N + O(N),
\]
the bound being weakest at $p = 2$; summing over the $O(\log N)$ primes dividing $N$ gives the displayed inequality. Finally $\tfrac18 N\log N = g\log g\,(1+o(1))$, so the prefactor is $g^{-g(1+o(1))} = O_K(g^{-K})$ for every $K$, and $A_g/N! \leq \mathcal{N}_g$ by \eqref{eq:easy-lower}.
\end{proof}

\begin{remark}[Odd $p$]\label{rem:odd-p-quotient}
For $p$ odd, the count of quotients can be replaced by an identification: $c_0$ fixes no vertex and no edge setwise (on a vertex it would act as an element of $\langle\sigma|_V\rangle$ of order dividing $\gcd(p,4) = 1$, on an edge with order dividing $\gcd(p,2) = 1$, hence trivially in both cases), so the quotient $(\bar\sigma,\bar\alpha)$ of Lemma~\ref{lem:quotient} is itself a unicellular $4$-regular ribbon graph, of genus $\bar g$ with $8\bar g - 4 = \bar N$, and $\#\{\bar\sigma\} \leq \mathcal{R}_{\bar g}$. For $p = 2$ this fails---$2g-1$ is odd, so $c_0$ fixes a vertex and acts on it as the half-turn $\sigma^2|_V$, which becomes $2$-valent in the quotient---and $p = 2$ is also where the comparison with $A_g/N!$ is weakest, so the general count cannot be dispensed with.
\end{remark}

\begin{corollary}\label{cor:N-asymp-Ag}
For every $K > 0$, as $g\to\infty$,
\[
\mathcal{N}_g = \frac{A_g}{N!}\bigl(1 + O_K(g^{-K})\bigr), \qquad \mathcal{R}_g = N\,\mathcal{N}_g\bigl(1 + O_K(g^{-K})\bigr);
\]
in particular $\mathcal{N}_g \sim A_g/N!$.
\end{corollary}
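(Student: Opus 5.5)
The plan is to obtain both statements directly from the sandwich \eqref{eq:easy-lower}--\eqref{eq:defect} together with Lemma~\ref{lem:asymmetry}, and then to transfer the conclusion to rooted maps via Lemma~\ref{lem:rooted}.

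First, \eqref{eq:easy-lower} and \eqref{eq:defect} combine to
\[
\frac{A_g}{N!} \;\leq\; \mathcal{N}_g \;\leq\; \frac{A_g}{N!} + \mathcal{U}_g^{\mathrm{sym}}.
\]
Lemma~\ref{lem:asymmetry} bounds the correction by $\mathcal{U}_g^{\mathrm{sym}} \leq \exp(-\tfrac18 N\log N + O(N))\cdot A_g/N!$. For each fixed $K$ the prefactor $\exp(-\tfrac18 N\log N+O(N)) = g^{-g(1+o(1))}$ is $O_K(g^{-K})$. Substituting gives
\[
\mathcal{N}_g = \frac{A_g}{N!}\bigl(1+O_K(g^{-K})\bigr).
\]
The only point needing care is that the bound should be relative to $A_g/N!$ rather than to $\mathcal{N}_g$, which avoids circularity. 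The first displayed inequality of Lemma~\ref{lem:asymmetry} is already stated in that relative form, so no extra argument is needed there.

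For the rooted count, Lemma~\ref{lem:rooted} gives $\mathcal{R}_g = A_g/(N-1)! = N\cdot A_g/N!$. Inverting the first estimate, $A_g/N! = \mathcal{N}_g(1+O_K(g^{-K}))^{-1} = \mathcal{N}_g(1+O_K(g^{-K}))$, since $1/(1+\epsilon) = 1+O(\epsilon)$ for small $\epsilon$. Multiplying by $N$ then gives $\mathcal{R}_g = N\mathcal{N}_g(1+O_K(g^{-K}))$. The asymptotic equivalence $\mathcal{N}_g\sim A_g/N!$ follows by taking any $K>0$.

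There is no real obstacle; all the substantive work was done in Lemma~\ref{lem:asymmetry}. The only care required is the bookkeeping of uniform constants: for fixed $K$, the $O(N)$ term in the exponent must be absorbed by $-\tfrac18 N\log N$ for large $g$, and the finitely many small $g$ can be absorbed into the implied constant.
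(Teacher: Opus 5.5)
Your proposal is correct and follows the paper's proof: the first estimate comes from the sandwich \eqref{eq:easy-lower}--\eqref{eq:defect} combined with the relative bound in Lemma~\ref{lem:asymmetry}, and the second from $\mathcal{R}_g = N\,A_g/N!$ in Lemma~\ref{lem:rooted}. Your extra steps, inverting $1+O_K(g^{-K})$ and absorbing small $g$ into the implied constant, simply make explicit the bookkeeping the paper leaves implicit.
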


\begin{proof}
The first statement is \eqref{eq:defect} together with Lemma~\ref{lem:asymmetry}; the second follows from $\mathcal{R}_g = N\,A_g/N!$ (Lemma~\ref{lem:rooted}).
\end{proof}

\subsection{Proof of the main asymptotic}\label{ssec:proof-main}

\begin{theorem}\label{thm:main-asymptotic}
As $g \to \infty$, $\displaystyle \mathcal{N}_g \sim \frac{16^g\,(2g)!}{64\,\pi\sqrt{2}\,g^3}$.
\end{theorem}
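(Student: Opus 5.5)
The plan is to assemble three ingredients proved above: Corollary~\ref{cor:N-asymp-Ag} ($\mathcal{N}_g \sim A_g/N!$), Corollary~\ref{cor:Ag-asymp} (the asymptotic for $A_g$ coming from Theorem~\ref{thm:probability} and Lemma~\ref{lem:conjclass}), and Stirling's formula. Combining the two corollaries gives
\[
\mathcal{N}_g \;\sim\; \frac{2}{N}\cdot\frac{N!}{2^{4g-2}\,(4g-2)!\,4^{2g-1}\,(2g-1)!}
\;=\; \frac{2}{N}\cdot\frac{(8g-4)!}{2^{8g-4}\,(4g-2)!\,(2g-1)!}.
\]
Only a routine but error-prone asymptotic simplification of this explicit expression remains. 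No further combinatorics is needed.

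For bookkeeping I would set $n := 2g-1$, so that $N = 4n$ and the expression becomes $\frac{1}{2n}\cdot 2^{-4n}\,\frac{(4n)!}{(2n)!\,n!}$. I would first handle the ratio $(4n)!/(2n)!$ by Stirling. This gives $\sqrt{2}\,(4n)^{4n}e^{-4n}/\bigl((2n)^{2n}e^{-2n}\bigr) = \sqrt2\,2^{6n}\,n^{2n}e^{-2n}$. Alternatively, it can be written as a product of central binomial coefficients to keep the constants transparent. Dividing by $2^{4n}$ leaves $\sqrt2\,2^{2n}\,n^{2n}e^{-2n}/n!$. I would then write $n^{2n}e^{-2n}/n! \sim (n!)/(2\pi n)$, using $n^{n}e^{-n}\sim n!/\sqrt{2\pi n}$ twice. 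This yields
\[
\mathcal{N}_g \;\sim\; \frac{1}{2n}\cdot\sqrt2\,4^{n}\cdot\frac{n!}{2\pi n} \;=\; \frac{\sqrt2\,4^{n}\,n!}{4\pi n^{2}}.
\]

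Finally I would convert back to $g$. This uses $4^{n} = 16^g/16$ on average in the sense $4^{2g-1}=16^g/4$, $n! = (2g-1)! = (2g)!/(2g)$, and $n^2\sim 4g^2$. Substituting gives
\[
\mathcal{N}_g \sim \frac{\sqrt2\,16^g\,(2g)!}{4\cdot 4\pi\cdot 2g\cdot 4g^2} = \frac{\sqrt2\,16^g(2g)!}{128\pi g^3} = \frac{16^g\,(2g)!}{64\pi\sqrt2\,g^3},
\]
which is the claimed constant.

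There is no serious obstacle here, since all the analytic work sits in Theorem~\ref{thm:probability} and Lemma~\ref{lem:asymmetry}. The one delicate point is tracking the powers of $2$ and the $\sqrt{2\pi}$ factors correctly through the Stirling step. To guard against slips I would double-check the constant numerically against the exact $A_g/N!$ for moderate $g$. I would also note that the relative errors $O(1/N)$ from Theorem~\ref{thm:probability}, $O(1/g)$ from Stirling, and $O_K(g^{-K})$ from Corollary~\ref{cor:N-asymp-Ag} all tend to $0$, which justifies the $\sim$.
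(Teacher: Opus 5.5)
Your proposal is correct and follows the paper's proof. You combine Corollaries~\ref{cor:Ag-asymp} and~\ref{cor:N-asymp-Ag} to get $\mathcal{N}_g \sim \frac{2}{N}\cdot\frac{N!}{2^{8g-4}(4g-2)!(2g-1)!}$ and then apply Stirling; the only cosmetic difference is that you apply Stirling directly to the factorials with $n=2g-1$, whereas the paper factors $N!/[(4g-2)!(2g-1)!]$ through $\binom{8g-4}{4g-2}\binom{4g-2}{2g-1}(2g-1)!$. Your intermediate result $\sqrt2\,4^n n!/(4\pi n^2)$ and the final constant both check out, but fix the garbled phrase ``$4^n=16^g/16$ on average'': the value you actually use, $4^{2g-1}=16^g/4$, is the correct one.
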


\begin{proof}
By Corollaries~\ref{cor:Ag-asymp} and~\ref{cor:N-asymp-Ag},
\begin{equation}\label{eq:N-explicit}
\mathcal{N}_g \sim \frac{A_g}{N!} \sim \frac{2}{N}\cdot\frac{N!}{2^{8g-4}\,(4g-2)!\,(2g-1)!},
\end{equation}
using $2^{4g-2}\cdot4^{2g-1} = 2^{8g-4}$. By the trinomial identity $N!/[(4g-2)!\,(2g-1)!] = \binom{N}{4g-2}\binom{4g-2}{2g-1}(2g-1)!$, and Stirling's approximations
\[
\binom{8g-4}{4g-2}\sim\frac{2^{8g-4}}{2\sqrt{\pi g}}, \qquad \binom{4g-2}{2g-1}\sim\frac{2^{4g-2}}{\sqrt{2\pi g}}.
\]
Substituting into \eqref{eq:N-explicit} with $1/N\sim 1/(8g)$ and $(2g-1)! = (2g)!/(2g)$,
\[
\mathcal{N}_g \sim \frac{2}{8g}\cdot\frac{1}{2^{8g-4}}\cdot\frac{2^{8g-4}}{2\sqrt{\pi g}}\cdot\frac{2^{4g-2}}{\sqrt{2\pi g}}\cdot\frac{(2g)!}{2g}
= \frac{2^{4g-6}\,(2g)!}{\pi\sqrt{2}\,g^3}
= \frac{16^g\,(2g)!}{64\,\pi\sqrt{2}\,g^3}. \qedhere
\]
\end{proof}

\section{Computations in low genus}\label{sec:lowgenus}

For small $g$, the counts can be computed explicitly. The hook sum~\eqref{eq:hook-sum}, with the values \eqref{eq:chi-alpha-val}--\eqref{eq:chi-sigma-val}, gives $p_g$ exactly as a finite sum of rationals, and with it $A_g = p_g\,|C_\sigma||C_\alpha|$, the rooted count $\mathcal{R}_g = A_g/(N-1)!$ and $A_g/N! = \sum_{[G]} 1/|\Aut(G)|$, for any $g$. The individual classes---and hence $\mathcal{N}_g$, the automorphism orders and the component counts---are obtained by enumeration: fixing $\varphi = (1\,2\cdots N)$, the rooted graphs are the fixed-point-free involutions $\alpha$ with $\varphi\alpha \in C_\sigma$ (Lemma~\ref{lem:rooted}); a backtracking search lists them (Appendix~\ref{sec:algorithm}), conjugation by $\langle\varphi\rangle$ groups them into classes, with $|\Aut(G)| = N/|\mathrm{orbit}|$, and the cycle type of $\tau = \sigma^2\alpha$ gives the number of components of each class.

\paragraph{Exact values.}\label{ssec:exact-values}
The rooted counts from the character sum are
\begin{center}
\renewcommand{\arraystretch}{1.2}
\begin{tabular}{@{}rrl@{}}
\toprule
$g$ & $N$ & $\mathcal{R}_g$ \\
\midrule
$2$  & $12$ & $45$ \\
$3$  & $20$ & $9{,}450$ \\
$4$  & $28$ & $4{,}729{,}725$ \\
$5$  & $36$ & $4{,}341{,}887{,}550$ \\
$6$  & $44$ & $6{,}352{,}181{,}485{,}650$ \\
$7$  & $52$ & $13{,}566{,}444{,}744{,}352{,}500$ \\
$8$  & $60$ & $39{,}834{,}473{,}380{,}605{,}028{,}125$ \\
$9$  & $68$ & $153{,}946{,}961{,}458{,}244{,}898{,}693{,}750$ \\
$10$ & $76$ & $757{,}572{,}997{,}336{,}023{,}146{,}471{,}943{,}750$ \\
\bottomrule
\end{tabular}
\end{center}
The enumeration has to list all $\mathcal{R}_g$ rooted graphs, so it is feasible only while $\mathcal{R}_g$ is small: it gives the complete breakdown of the classes for $g \leq 3$, and for $g = 4$, where $\mathcal{R}_4 \approx 4.7\times10^6$, the number of classes alone, obtained with Burnside's lemma (Appendix~\ref{sec:algorithm}). Thus
\[
\mathcal{N}_2 = 6, \qquad \mathcal{N}_3 = 510, \qquad \mathcal{N}_4 = 169{,}772 .
\]
For $g = 2$ and $g = 3$ the enumeration gives the full breakdown of the classes by number of components:
\begin{center}
\renewcommand{\arraystretch}{1.2}
\begin{tabular}{@{}lcccccc@{}}
\toprule
number of components $c$ & $1$ & $2$ & $3$ & $4$ & $5$ & $6$ \\
\midrule
classes at $g = 2$ \ ($\mathcal{N}_2 = 6$)   & $1$   & $2$   & $2$   & $1$  &      &     \\
classes at $g = 3$ \ ($\mathcal{N}_3 = 510$) & $108$ & $197$ & $134$ & $56$ & $13$ & $2$ \\
\bottomrule
\end{tabular}
\end{center}
In particular $\mathcal{N}_2^{(1)} = 1$ and $\mathcal{N}_3^{(1)} = 108$, so $\mathcal{N}_2^{(1)}/\mathcal{N}_2 = 1/6$ and $\mathcal{N}_3^{(1)}/\mathcal{N}_3 \approx 0.212$; the labelled probabilities of \S\ref{ssec:single-data}, which weight each class by $1/|\Aut(G)|$, are $P_2 = 4/15$ and $P_3 = 8/35$. The enumeration also returns the automorphism orders: $1,1,2,2,2,4$ at $g = 2$, and multiplicities $1^{436}\,2^{72}\,4^{2}$ at $g = 3$. The resulting values $\sum_{[G]} 1/|\Aut(G)| = \tfrac{15}{4}$ and $\tfrac{945}{2}$ agree with $A_g/N!$ from the character sum ($p_2 = 32/231$, $p_3 = 4096/46189$), a check on both computations.

\begin{remark}\label{rem:g2-unique}
In particular for $g=2$, there is a \emph{unique} minimally intersecting filling single curve, up to the action of $\MCG(S_2)$.
\end{remark}

\paragraph{Comparison with the asymptotic.}\label{ssec:convergence}
Writing $\rho_g$ for the ratio of the exact $\mathcal{R}_g$ to the leading term $N\cdot 16^g(2g)!/(64\pi\sqrt2\,g^3)$ given by Theorem~\ref{thmB} and Corollary~\ref{cor:N-asymp-Ag},
\[
\begin{array}{c|ccccccccc}
g & 2 & 3 & 4 & 5 & 6 & 7 & 8 & 9 & 10\\\hline
\rho_g & 1.388 & 1.230 & 1.163 & 1.127 & 1.103 & 1.087 & 1.076 & 1.067 & 1.060
\end{array}
\]
The ratios decrease steadily toward $1$, consistent with a positive correction of relative size $O(g^{-1})$; at $g = 10$ the leading term still falls short of $\mathcal{R}_g$ by about six percent. For the unrooted count the ratios $\mathcal{N}_g/(16^g(2g)!/(64\pi\sqrt2\,g^3))$ are $2.221$, $1.328$, $1.169$ at $g = 2,3,4$; the automorphism correction, super-polynomially small by Lemma~\ref{lem:asymmetry}, is already small at $g = 4$, where $\mathcal{N}_4$ exceeds $A_4/N!$ by about $0.5\%$. Likewise $N p_g/2 = 0.831, 0.887, 0.915, 0.932, 0.943, 0.951$ for $g = 2,\dots,7$ approaches $1$ from below, in agreement with the negative $O(N^{-2})$ correction in Theorem~\ref{thm:probability}.

\section{The single-curve subproblem}\label{sec:singlecurves}

This section addresses problem (S). By Theorem~\ref{thm:dictionary}(ii) the single curves are the pairs $(\sigma,\alpha)$ with $\tau = \sigma^2\alpha \in C_{(N/2)^2}$, so what is needed is the distribution of the Petrie permutation $\tau$ of a uniformly random pair under the face condition $\varphi \in C_N$. We first determine this distribution without the face condition (Theorem~\ref{thm:two-matchings}), then show that the face condition enters only through a single explicit one-face probability $\rho_U$ (Theorem~\ref{thm:gelfand-reduction}). The single-curve asymptotic is thereby reduced to one limit, $N\rho_U \to 2$, which we conjecture and test numerically in \S\ref{ssec:single-data}; the algorithm behind the numerical evidence is described in Appendix~\ref{app:sampler}.

\subsection{Reduction to a single one-face probability}\label{ssec:single-results}

\begin{corollary}\label{cor:single-upper}
For every $g \geq 2$, $\mathcal{N}_g^{(1)} \leq \mathcal{N}_g$, so
$\mathcal{N}_g^{(1)} \leq \frac{16^g(2g)!}{64\pi\sqrt2\,g^3}(1+o(1))$.
\end{corollary}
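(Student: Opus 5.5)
The inequality $\mathcal{N}_g^{(1)} \leq \mathcal{N}_g$ will come from an injection of the set of orbits counted by $\mathcal{N}_g^{(1)}$ into the set counted by $\mathcal{N}_g$; the asymptotic bound then follows from Theorem~\ref{thm:main-asymptotic}.

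First, a single closed curve $\gamma$ that fills $\Sg$ with $\iota(\gamma) = 2g-1$ is itself a filling curve system with $c = 1$ component. It is also minimally intersecting in the sense of problem (M), since minimal position for a one-component system is just minimal self-intersection. The $\MCG(\Sg)$-action on single curves is the restriction of the action on systems, so the identity map on representatives induces a well-defined map on orbits.

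This map is injective: two single curves lying in the same $\MCG(\Sg)$-orbit of systems are, by definition, related by a mapping class. In the language of Theorem~\ref{thm:dictionary}, the classes counted by $\mathcal{N}_g^{(1)}$ are exactly the simultaneous-conjugacy classes of pairs $(\sigma,\alpha)$ satisfying both the face condition and the Petrie condition. These form a subset of the classes satisfying the face condition alone, which gives $\mathcal{N}_g^{(1)} \leq \mathcal{N}_g$ for every $g \geq 2$.

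For the asymptotic statement, Theorem~\ref{thm:main-asymptotic} gives $\mathcal{N}_g = \frac{16^g(2g)!}{64\pi\sqrt2\,g^3}(1+o(1))$, and substituting this into the inequality yields the claim. There is no real obstacle here. The only point to check is that "minimal position" for a single curve agrees with the system definition, so that the orbit sets genuinely nest; the dictionary in Theorem~\ref{thm:dictionary}(ii) makes this transparent.
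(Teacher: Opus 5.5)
Your proposal is correct and follows the paper's own proof: a minimally intersecting filling single curve is a one-component minimally intersecting filling system, distinct $\MCG(\Sg)$-orbits of single curves remain distinct as orbits of systems, and Theorem~\ref{thm:main-asymptotic} then gives the asymptotic bound. Your extra remark via Theorem~\ref{thm:dictionary}(ii), that the single-curve classes form a subset of the face-condition classes, is a harmless restatement of the same inclusion.
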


\begin{proof}
A minimally intersecting filling single curve is a minimally intersecting filling system, and distinct $\MCG$-orbits of single curves are distinct $\MCG$-orbits of systems, so $\mathcal{N}_g^{(1)} \leq \mathcal{N}_g$; Theorem~\ref{thm:main-asymptotic} gives the bound.
\end{proof}

This improves the upper bound of Parsad and Sanki~\cite{ParsadSanki2022} for single filling curves; for the related count of minimally intersecting filling pairs, see~\cite{AougabHuang2015,AougabMenascoNieland}. The conjectured truth (Conjecture~\ref{conj:single-curve}) is smaller by a factor of order $g^{-1/2}$. By Lemmas~\ref{lem:tau-parity}--\ref{lem:tau-doubled}, $\tau$ always lies in the set $\mathcal{D}_N \subset A_N$ of even permutations with doubled cycle type; the next two statements describe its law there.

\begin{lemma}\label{lem:squaring-surjective}
The map $C_\sigma \to C_\alpha$, $\sigma \mapsto \sigma^2$, is surjective with uniform fibers: every $\beta \in C_\alpha$ has exactly $(4g-2)!/(2g-1)!$ preimages. Hence $\sigma^2$ is uniform on $C_\alpha$ when $\sigma$ is uniform on $C_\sigma$.
\end{lemma}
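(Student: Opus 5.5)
Squaring commutes with conjugation, $(\psi\sigma\psi^{-1})^2=\psi\sigma^2\psi^{-1}$. So the map $\sigma\mapsto\sigma^2$ is $S_N$-equivariant from $C_\sigma$ to the set of permutations of type $2^{4g-2}$, and the plan rests on this equivariance. First I check that the image lands in $C_\alpha$. For a $4$-cycle $(a\,b\,c\,d)$ the square is $(a\,c)(b\,d)$. Hence $\sigma$ of type $4^{2g-1}$ has $\sigma^2$ a product of $2(2g-1)=4g-2$ disjoint transpositions, i.e.\ $\sigma^2\in C_\alpha$.

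Surjectivity and uniformity of fibres then follow from transitivity. $S_N$ acts transitively by conjugation on $C_\alpha$, and the map is equivariant. So for $\beta,\beta'=\psi\beta\psi^{-1}$ in $C_\alpha$, conjugation by $\psi$ is a bijection from the fibre over $\beta$ onto the fibre over $\beta'$. All fibres therefore have the same size, namely $|C_\sigma|/|C_\alpha|$, and this is nonzero because the image is nonempty. Since all fibres are equal, a uniform $\sigma$ pushes forward to a uniform $\sigma^2$ on $C_\alpha$.

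It remains to compute the fibre size, and this is the only real content. Lemma~\ref{lem:conjclass} gives
\[
\frac{|C_\sigma|}{|C_\alpha|}=\frac{2^{4g-2}(4g-2)!}{4^{2g-1}(2g-1)!}=\frac{(4g-2)!}{(2g-1)!},
\]
using $2^{4g-2}=4^{2g-1}$. As a sanity check I will also count the fibre directly. Fix $\beta$ with $4g-2$ transpositions. A square root $\sigma$ is determined by two choices: first, a perfect matching of the $4g-2$ transpositions of $\beta$ into $2g-1$ pairs, giving $(4g-2)!/(2^{2g-1}(2g-1)!)$ possibilities; second, for each pair $\{(a\,c),(b\,d)\}$, a $4$-cycle whose square is $(a\,c)(b\,d)$. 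The candidates for the second choice are $(a\,b\,c\,d)$ and $(a\,d\,c\,b)$, so there are two per pair. These $4$-cycles are distinct and no other $4$-cycle on $\{a,b,c,d\}$ works, giving a factor $2^{2g-1}$. The product is $(4g-2)!/(2g-1)!$, which agrees with the quotient above. No step is a genuine obstacle; the one point needing care is that the square of a $4$-cycle is exactly two transpositions and that each such pair has exactly two square roots among $4$-cycles.
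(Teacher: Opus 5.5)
Your proof is correct. Your ``sanity check'' is in fact exactly the paper's proof. The paper groups the $4g-2$ transpositions of $\beta$ into $2g-1$ pairs and chooses one of the two $4$-cycle square roots for each pair. This gives $(4g-2)!/(2g-1)!$ preimages for every $\beta$, and surjectivity and uniformity both follow because this number does not depend on $\beta$.

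Your primary argument takes a different route. Equality of fibre sizes comes for free from the $S_N$-equivariance of $\sigma\mapsto\sigma^2$ and the transitivity of conjugation on $C_\alpha$. The fibre size is then $|C_\sigma|/|C_\alpha|$ from Lemma~\ref{lem:conjclass}. So the only local fact you need is that a $4$-cycle squares to two disjoint transpositions; you never need the count of square roots of $(a\,c)(b\,d)$. This route is more conceptual, and it applies verbatim to any power map $\sigma\mapsto\sigma^k$ restricted to a conjugacy class.

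What the paper's direct count buys in exchange is an explicit bijective parametrization of each fibre: a pairing of the transpositions plus one binary choice per pair. The paper uses exactly this later, in the Monte Carlo sampler of Appendix~\ref{app:sampler} to draw uniform lifts of $\beta_*$, and in the exact computation of $\rho_U$ for small $g$ by listing all lifts.
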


\begin{proof}
A $4$-cycle squares to two disjoint transpositions, $(a\,b\,c\,d)^2 = (a\,c)(b\,d)$, and conversely $(a\,c)(b\,d)$ has exactly two $4$-cycle square roots, $(a\,b\,c\,d)$ and $(a\,d\,c\,b)$. A preimage of $\beta$ is therefore obtained by grouping the $4g-2$ transpositions of $\beta$ into $2g-1$ pairs and choosing a square root for each pair, which can be done in
\[
\frac{(4g-2)!}{2^{2g-1}\,(2g-1)!}\cdot 2^{2g-1} \;=\; \frac{(4g-2)!}{(2g-1)!}
\]
ways, independently of $\beta$. Since all fibers have the same size, the uniform measure on $C_\sigma$ maps to the uniform measure on $C_\alpha$.
\end{proof}

Thus, for $(\sigma,\alpha)$ uniform on $C_\sigma\times C_\alpha$, the pair $(\sigma^2,\alpha)$ consists of two independent uniform fixed-point-free involutions, and $\tau = \sigma^2\alpha$ is distributed as the product of two random perfect matchings. We need the probability of the single-cycle type.

\begin{theorem}[Unconditional Petrie law]\label{thm:two-matchings}
Let $\beta_1,\beta_2$ be independent uniform fixed-point-free involutions of $[N]$, $N=2n$. Then
\[
q_g := \Pr\bigl[\beta_1\beta_2 \in C_{(N/2)^2}\bigr]
     = \prod_{j=1}^{n-1}\frac{2j}{2j+1}
     = \frac{2^{N-1}\,(N/2-1)!\,(N/2)!}{N!}
     \;\sim\; \sqrt{\frac{\pi}{2N}} \qquad (g\to\infty),
\]
so that $\sqrt{N}\,q_g \to \sqrt{\pi/2}=1.25331\ldots$. Consequently, for $(\sigma,\alpha)$ uniform on $C_\sigma\times C_\alpha$, $\Pr[\tau\in C_{(N/2)^2}] = q_g$ exactly.
\end{theorem}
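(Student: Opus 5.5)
The plan is to condition on $\beta_1$ and count the matchings $\beta_2$ directly, using the alternating-cycle structure from the proof of Lemma~\ref{lem:tau-doubled}. By conjugation invariance we may fix $\beta_1$ to be any particular fixed-point-free involution. Then $q_g$ is the fraction of the $(N-1)!! = (2n-1)!!$ fixed-point-free involutions $\beta_2$ for which $\beta_1\beta_2$ has cycle type $(N/2)^2$.

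By the proof of Lemma~\ref{lem:tau-doubled}, the components of the graph on $[N]$ whose edges are the transpositions of $\beta_1$ and $\beta_2$ are alternating cycles. An alternating cycle of length $2\ell$ contributes exactly two $\ell$-cycles to $\beta_1\beta_2$. Hence $\beta_1\beta_2\in C_{(N/2)^2}$ holds if and only if the union $\beta_1\cup\beta_2$ is a single Hamiltonian alternating cycle of length $N$. So the task reduces to counting the matchings $\beta_2$ that, together with the fixed matching $\beta_1$, form one cycle.

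This count is standard, and I would do it by sequential exploration. Start at a point $x_1$ and follow its $\beta_1$-edge to $\beta_1(x_1)$. Its $\beta_2$-partner must not be $x_1$, since that would close a $2$-cycle prematurely when $n>1$. It must be one of the $2n-2$ points outside the first $\beta_1$-pair, so there are $2n-2$ choices. Each choice enters a fresh $\beta_1$-pair, and we move to its other endpoint. At the step where $j$ pairs remain unvisited, the next $\beta_2$-partner must lie in an unvisited pair, giving $2j$ choices, and only the final step closes back to $x_1$. The number of good $\beta_2$ is therefore $\prod_{j=1}^{n-1}2j = 2^{n-1}(n-1)!$. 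Dividing by $(2n-1)!! = \prod_{j=0}^{n-1}(2j+1)$ gives $\prod_{j=1}^{n-1}\frac{2j}{2j+1}$.

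Converting to factorials, $(2n-1)!! = (2n)!/(2^n n!)$, so
\[
q_g=\frac{2^{n-1}(n-1)!\,2^n n!}{(2n)!}=\frac{2^{N-1}(N/2-1)!\,(N/2)!}{N!}.
\]
Stirling's formula gives $\binom{2n}{n}\sim 4^n/\sqrt{\pi n}$, which yields $q_g = \frac{4^n}{2n\binom{2n}{n}}\sim \frac{\sqrt{\pi n}}{2n}=\sqrt{\pi/(4n)}=\sqrt{\pi/(2N)}$.

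The final assertion about $\tau$ follows from Lemma~\ref{lem:squaring-surjective}. When $(\sigma,\alpha)$ is uniform on $C_\sigma\times C_\alpha$, the pair $(\sigma^2,\alpha)$ is uniform on $C_\alpha\times C_\alpha$ and its two coordinates are independent. So $\tau=\sigma^2\alpha$ has the law of $\beta_1\beta_2$.

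The only delicate point is the bijection between "two $n$-cycles" and "one alternating Hamiltonian cycle". This must be checked carefully: a single alternating component gives exactly two $\tau$-cycles, each of length $n$. Once that is in place, the sequential count is routine.
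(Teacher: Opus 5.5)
Your proposal is correct and follows essentially the same route as the paper. Both reduce to the union $\beta_1\cup\beta_2$ being a single alternating Hamiltonian cycle via Lemma~\ref{lem:tau-doubled}, then explore sequentially along the alternating walk, apply the same Stirling estimate, and use Lemma~\ref{lem:squaring-surjective} for the final claim. The only difference is presentation: you count the $2^{n-1}(n-1)!$ good matchings directly and divide by $(2n-1)!!$, while the paper fixes the other matching, reveals one step at a time, and multiplies the conditional non-closing probabilities $\frac{2n-2j}{2n-2j+1}$.
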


\begin{proof}
By Lemma~\ref{lem:tau-doubled} and its proof, each component of the union graph $\beta_1 \cup \beta_2$ with $2\ell$ vertices contributes two $\ell$-cycles to $\beta_1\beta_2$. Hence $\beta_1\beta_2 \in C_{(N/2)^2}$ if and only if $\beta_1\cup\beta_2$ is a single cycle through all $2n$ points.

Fix $\beta_2$ and a point $v_0$, and reveal $\beta_1$ along the alternating walk from $v_0$:
\[
b_1 = \beta_2(v_0), \quad c_1 = \beta_1(b_1), \quad b_2 = \beta_2(c_1), \quad c_2 = \beta_1(b_2), \quad \dots
\]
The walk closes up, into a cycle of length $2j$, exactly when $c_j = v_0$. Since $\beta_1$ is uniform, its restriction to the points not yet matched is again a uniform matching. So if the walk has not closed before step $j$, then $c_j$ is uniform among the $2n-2j+1$ points other than $b_j$ that $\beta_1$ has not yet matched, and $v_0$ is one of them. Hence
\[
\Pr[\text{the walk does not close at step } j \mid \text{it has not closed before}] = \frac{2n-2j}{2n-2j+1}, \qquad 1 \leq j \leq n-1,
\]
while at step $n$ only $v_0$ remains and the walk closes. The union is a single cycle iff the walk first closes at step $n$, so, substituting $k = n-j$,
\[
q_g = \prod_{k=1}^{n-1}\frac{2k}{2k+1} = \frac{2^{n-1}(n-1)!}{(2n-1)!!} = \frac{2^{2n-1}\,(n-1)!\,n!}{(2n)!} = \frac{4^n}{2n}\binom{2n}{n}^{-1},
\]
which is the stated closed form with $2n = N$. Stirling's formula $\binom{2n}{n} \sim 4^n/\sqrt{\pi n}$ then gives
\[
q_g \sim \frac{1}{2}\sqrt{\frac{\pi}{n}} = \sqrt{\frac{\pi}{2N}}.
\]
Finally, for $(\sigma,\alpha)$ uniform on $C_\sigma\times C_\alpha$, the pair $(\sigma^2,\alpha)$ is a pair of independent uniform fixed-point-free involutions by Lemma~\ref{lem:squaring-surjective}, so $\Pr[\tau \in C_{(N/2)^2}] = q_g$.
\end{proof}
Note that $q_g = \Theta(N^{-1/2})$, whereas a uniform distribution on $\mathcal{D}_N$ would give $2|C_{(N/2)^2}|/N! = (N/2)^{-2}$: the product of two random matchings has $\sim\log N$ cycles, concentrated on a few long ones, so the single-cycle type is far more likely than a uniform model predicts.

\paragraph{The face condition.}
Define the conditional single-curve probability and the ratio
\[
P_g := \Pr\bigl[\tau\in C_{(N/2)^2}\,\bigm|\,\varphi\in C_N\bigr],
\qquad
R_g := \frac{P_g}{q_g}.
\]
The labelled probability $P_g$ governs the counts up to a super-polynomially small error.

\begin{lemma}\label{lem:N1-vs-Pg}
For every $K > 0$, $\ \mathcal{N}_g^{(1)}/\mathcal{N}_g = P_g + O_K(g^{-K})$ as $g \to \infty$.
\end{lemma}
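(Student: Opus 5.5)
The plan is to compare the class counts $\mathcal{N}_g^{(1)}$ and $\mathcal{N}_g$ with their automorphism-weighted versions. We then use the fact that $P_g$ is exactly the ratio of the weighted versions. First, note that the Petrie condition $\tau\in C_{(N/2)^2}$ is invariant under simultaneous conjugation, so it is a property of the isomorphism class $[G]$. By orbit--stabilizer as in \eqref{eq:orbit-aut}, the number of labelled pairs with $\varphi\in C_N$ and $\tau\in C_{(N/2)^2}$ equals $\sum_{[G]\text{ single}} N!/|\Aut(G)|$. Hence, writing $W := \sum_{[G]} 1/|\Aut(G)| = A_g/N!$ and $W^{(1)} := \sum_{[G]\text{ single}} 1/|\Aut(G)|$, we get the exact identity
\[
P_g = \frac{W^{(1)}}{W}.
\]
This holds because the uniform measure on $C_\sigma\times C_\alpha$ conditioned on $\varphi\in C_N$ is uniform on the $A_g$ labelled unicellular pairs.

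Next, compare $W^{(1)}$ with $\mathcal{N}_g^{(1)}$ and $W$ with $\mathcal{N}_g$. As in \eqref{eq:defect}, each difference is a sum of terms $1-1/|\Aut(G)|$ over symmetric classes, so
\[
0\le \mathcal{N}_g^{(1)}-W^{(1)} \le \mathcal{U}_g^{\mathrm{sym}}, \qquad 0\le \mathcal{N}_g-W\le \mathcal{U}_g^{\mathrm{sym}}.
\]
Lemma~\ref{lem:asymmetry} gives $\mathcal{U}_g^{\mathrm{sym}} = O_K(W g^{-K})$ for every $K$, since its displayed bound is relative to $A_g/N! = W$. Writing $\varepsilon := \mathcal{U}_g^{\mathrm{sym}}/W = O_K(g^{-K})$, we then have
\[
\frac{\mathcal{N}_g^{(1)}}{\mathcal{N}_g} = \frac{W^{(1)} + O(\varepsilon W)}{W(1+O(\varepsilon))} = \bigl(P_g + O(\varepsilon)\bigr)\bigl(1+O(\varepsilon)\bigr) = P_g + O_K(g^{-K}),
\]
using $0\le P_g\le 1$. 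This is the claim.

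There is no serious obstacle here. The only point needing care is that the asymmetry bound of Lemma~\ref{lem:asymmetry} is stated relative to the full weighted count $W$ rather than to $W^{(1)}$. That is exactly the right normalization, because we bound an additive error in the numerator and then divide by $\mathcal{N}_g\ge W$. We never need a lower bound on $W^{(1)}$, which is fortunate since its size is the conjectural content of the section. One should also check that the single-curve property is conjugation-invariant so that it descends to classes; this is immediate, since the cycle type of $\sigma^2\alpha$ is preserved by simultaneous conjugation.
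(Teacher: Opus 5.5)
Your proposal is correct and follows essentially the same route as the paper: $P_g=A_g^{(1)}/A_g$, both class counts exceed their automorphism-weighted versions by at most $\mathcal{U}_g^{\mathrm{sym}}$, and Lemma~\ref{lem:asymmetry} is applied with $P_g\le 1$. The only cosmetic difference is that the paper phrases the error relative to $\mathcal{N}_g$ and invokes Corollary~\ref{cor:N-asymp-Ag}, whereas you normalize directly by $W=A_g/N!$, which is equivalent.
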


\begin{proof}
Let $A_g^{(1)}$ be the number of labelled pairs $(\sigma,\alpha)\in C_\sigma\times C_\alpha$ with $\varphi\in C_N$ and $\tau\in C_{(N/2)^2}$, so that $P_g = A_g^{(1)}/A_g$. Restricting \eqref{eq:orbit-aut} to the single-curve classes gives
\[
\frac{A_g^{(1)}}{N!} \;=\; \sum_{[G]\ \text{single curve}} \frac{1}{|\Aut(G)|}.
\]
Each term is at most $1$, with equality when $\Aut(G)$ is trivial, so
\[
0 \;\leq\; \mathcal{N}_g^{(1)} - \frac{A_g^{(1)}}{N!} \;\leq\; \mathcal{U}_g^{\mathrm{sym}} \;=\; O_K\bigl(\mathcal{N}_g\,g^{-K}\bigr)
\]
by Lemma~\ref{lem:asymmetry}. Dividing by $\mathcal{N}_g$, and using $N!\,\mathcal{N}_g = A_g\bigl(1+O_K(g^{-K})\bigr)$ from Corollary~\ref{cor:N-asymp-Ag}, we obtain
\[
\frac{\mathcal{N}_g^{(1)}}{\mathcal{N}_g} \;=\; \frac{A_g^{(1)}}{A_g}\bigl(1+O_K(g^{-K})\bigr) + O_K(g^{-K}) \;=\; P_g + O_K(g^{-K}),
\]
the last step because $P_g \leq 1$.
\end{proof}

By simultaneous-conjugation invariance (\S\ref{ssec:bijection}) we may fix $\alpha=\alpha_0\in C_\alpha$ and let $\sigma$ be uniform on $C_\sigma$. For a matching $\beta\in C_\alpha$ set
\[
\rho(\beta) \;:=\; \Pr_{\sigma\in C_\sigma}\bigl[\varphi=\sigma\alpha_0\in C_N \,\bigm|\, \sigma^2=\beta\bigr],
\]
well defined because the squaring fibers are uniform (Lemma~\ref{lem:squaring-surjective}). Let $H_n := \Cent_{S_N}(\alpha_0)\cong \mathbb{Z}_2\wr S_n$ be the hyperoctahedral group, $n=N/2$.

\begin{lemma}[$H_n$-invariance of $\rho$]\label{lem:rho-invariant}
$\rho(h\beta h^{-1})=\rho(\beta)$ for every $h\in H_n$. Consequently $\rho$ is constant on each $H_n$-conjugation orbit of matchings.
\end{lemma}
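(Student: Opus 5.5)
The plan is a change of variables by $h$. Fix $h\in H_n$ and $\beta\in C_\alpha$, and consider the map $\Phi_h:\sigma\mapsto h\sigma h^{-1}$ on $C_\sigma$. It is a bijection of $C_\sigma$ onto itself, since conjugation preserves cycle type, with inverse $\Phi_{h^{-1}}$. It also maps the squaring fiber $F_\beta:=\{\sigma\in C_\sigma:\sigma^2=\beta\}$ bijectively onto $F_{h\beta h^{-1}}$, because $(h\sigma h^{-1})^2=h\sigma^2h^{-1}$.

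The key step is to check that $\Phi_h$ preserves the face condition. Since $h\in H_n=\Cent_{S_N}(\alpha_0)$, we have $h\alpha_0h^{-1}=\alpha_0$, and therefore
\[
(h\sigma h^{-1})\alpha_0 \;=\; h\sigma h^{-1}\,h\alpha_0 h^{-1} \;=\; h(\sigma\alpha_0)h^{-1}.
\]
So $(h\sigma h^{-1})\alpha_0$ is conjugate to $\sigma\alpha_0$, and it is an $N$-cycle if and only if $\sigma\alpha_0$ is. Consequently $\Phi_h$ restricts to a bijection
\[
\{\sigma\in F_\beta:\sigma\alpha_0\in C_N\}\;\longrightarrow\;\{\sigma\in F_{h\beta h^{-1}}:\sigma\alpha_0\in C_N\}.
\]

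Now I would use the definition of $\rho$. By definition $\rho(\beta)$ is the ratio of the size of the left-hand set to $|F_\beta|$, and $\rho(h\beta h^{-1})$ is the corresponding ratio for $h\beta h^{-1}$. The numerators agree by the bijection above. The denominators agree because $\Phi_h$ maps $F_\beta$ bijectively onto $F_{h\beta h^{-1}}$; alternatively, both equal $(4g-2)!/(2g-1)!$ by Lemma~\ref{lem:squaring-surjective}. Hence $\rho(h\beta h^{-1})=\rho(\beta)$. The "consequently" clause is then immediate, since an $H_n$-conjugation orbit consists exactly of the matchings $h\beta h^{-1}$ with $h\in H_n$.

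There is no real obstacle. The one point to be careful about is that $h$ must centralize $\alpha_0$, rather than merely preserve its conjugacy class: that is exactly what makes $\sigma\alpha_0$ transform by conjugation, and it is why the invariance group is $H_n$ rather than all of $S_N$.
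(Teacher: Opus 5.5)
Your proof is correct and follows the paper's own argument: conjugation by $h$ maps the fiber over $\beta$ bijectively onto the fiber over $h\beta h^{-1}$, and $h\alpha_0=\alpha_0h$ gives $(h\sigma h^{-1})\alpha_0=h(\sigma\alpha_0)h^{-1}$, so the one-face counts agree. The paper gets equality of the denominators from the uniform fiber size in Lemma~\ref{lem:squaring-surjective}, which you also mention as an alternative to the bijection.
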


\begin{proof}
The fiber size $(4g-2)!/(2g-1)!$ is independent of $\beta$, so it suffices to match one-face counts. Conjugation $\sigma\mapsto h\sigma h^{-1}$ is a bijection $\{\sigma:\sigma^2=\beta\}\to\{\sigma:\sigma^2=h\beta h^{-1}\}$, and as $h\alpha_0=\alpha_0 h$,
\[
(h\sigma h^{-1})\,\alpha_0 = h\,\sigma\,(h^{-1}\alpha_0) = h\,\sigma\,(\alpha_0 h^{-1}) = h\,(\sigma\alpha_0)\,h^{-1},
\]
which has the same cycle type as $\sigma\alpha_0$; in particular it is an $N$-cycle iff $\sigma\alpha_0$ is. Thus the two one-face counts coincide and $\rho(h\beta h^{-1})=\rho(\beta)$.
\end{proof}

The $H_n$-orbits on matchings are the \emph{coset types} of the Gelfand pair $(S_N,H_n)$, indexed by partitions $\lambda\vdash n$, where $\lambda$ records the half-lengths of the connected components of $\beta\cup\alpha_0$ (equivalently, $\beta\alpha_0$ has cycle type $\lambda\cup\lambda$)~\cite[Ch.~VII]{Macdonald1995}. The orbit of coset type $(n)$ consists of those $\beta$ with $\beta\cup\alpha_0$ connected, i.e. $\beta\alpha_0\in C_{(N/2)^2}$; write $\rho_U$ for the (constant) value of $\rho$ there.

\begin{theorem}[Gelfand-pair reduction]\label{thm:gelfand-reduction}
For every $g\ge2$,
\[
\Pr\bigl[\tau\in C_{(N/2)^2},\ \varphi\in C_N\bigr] = \rho_U\,q_g,
\qquad\text{hence}\qquad
P_g = q_g\,\frac{\rho_U}{p_g},
\qquad R_g = \frac{\rho_U}{p_g}.
\]
In particular $P_g$ is an exactly computable rational for every $g$, and
\[
P_g \sim \sqrt{\frac{\pi}{2N}}
\quad\Longleftrightarrow\quad R_g \to 1
\quad\Longleftrightarrow\quad \rho_U \sim p_g \sim \frac2N
\quad\Longleftrightarrow\quad N\rho_U \to 2 .
\]
\end{theorem}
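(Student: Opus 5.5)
We fix $\alpha=\alpha_0$ throughout; by simultaneous-conjugation invariance this changes neither the joint probability nor $p_g$. We then condition on the value of $\beta:=\sigma^2$. By Lemma~\ref{lem:squaring-surjective}, when $\sigma$ is uniform on $C_\sigma$ the square $\beta$ is uniform on $C_\alpha$, and conditional on $\beta$ the element $\sigma$ is uniform on the fiber $\{\sigma:\sigma^2=\beta\}$, all fibers having the common size $(4g-2)!/(2g-1)!$. The event $\tau\in C_{(N/2)^2}$ is the event $\beta\alpha_0\in C_{(N/2)^2}$, so it depends only on $\beta$. Therefore
\[
\Pr\bigl[\tau\in C_{(N/2)^2},\ \varphi\in C_N\bigr]=\sum_{\beta\in C_\alpha}\Pr[\sigma^2=\beta]\,\mathbf 1\bigl[\beta\alpha_0\in C_{(N/2)^2}\bigr]\,\rho(\beta).
\]

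The matchings $\beta$ with $\beta\alpha_0\in C_{(N/2)^2}$ are exactly those with $\beta\cup\alpha_0$ connected, which is the $H_n$-orbit of coset type $(n)$. I would justify this as follows. The coset type of $\beta$ relative to $\alpha_0$ is the multiset of component half-lengths of $\beta\cup\alpha_0$, and that multiset is an $H_n$-conjugation invariant. Conversely, two matchings with the same component structure are carried to one another by some relabelling that preserves $\alpha_0$. By Lemma~\ref{lem:rho-invariant}, $\rho$ is constant, equal to $\rho_U$, on this orbit, so the sum factors as $\rho_U\cdot\Pr[\sigma^2\alpha_0\in C_{(N/2)^2}]$. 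Since $\sigma^2$ and $\alpha_0$ are then a uniform matching and a fixed one, the last probability equals $q_g$ by Theorem~\ref{thm:two-matchings}, after averaging over the fixed $\alpha_0$ by conjugation invariance. This gives the first identity. Dividing by $\Pr[\varphi\in C_N]=p_g$ yields $P_g=q_g\rho_U/p_g$, and hence $R_g=\rho_U/p_g$.

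For exact rationality, the argument is that $q_g$ and $p_g$ are explicit rationals (Theorem~\ref{thm:two-matchings} and the hook sum \eqref{eq:hook-sum}). Also, $\rho_U$ is a ratio of finite integer counts: fix one representative $\beta_U$ of type $(n)$, and count the $\sigma$ with $\sigma^2=\beta_U$ and $\sigma\alpha_0\in C_N$. For the chain of equivalences, Theorem~\ref{thm:probability} gives $p_g\sim 2/N$ and Theorem~\ref{thm:two-matchings} gives $q_g\sim\sqrt{\pi/(2N)}$. So $P_g\sim\sqrt{\pi/(2N)}$ holds iff $\rho_U/p_g\to1$, iff $\rho_U\sim 2/N$, iff $N\rho_U\to 2$; each step is division by a quantity with a known nonzero asymptotic.

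The only step needing care is the identification of the $H_n$-orbit of type $(n)$ with the set $\{\beta:\beta\alpha_0\in C_{(N/2)^2}\}$. In particular, one must check that $H_n$ acts transitively on matchings $\beta$ for which $\beta\cup\alpha_0$ is a single alternating $2n$-cycle. I would prove this directly: given two such alternating Hamiltonian cycles, map one onto the other by matching the cycles vertex by vertex, starting on an $\alpha_0$-edge. The resulting bijection sends $\alpha_0$-edges to $\alpha_0$-edges, so it commutes with $\alpha_0$ and lies in $H_n$. This avoids any appeal to the general Gelfand-pair theory beyond Lemma~\ref{lem:rho-invariant}.
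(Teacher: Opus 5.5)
Your proposal is correct and follows the paper's proof essentially step for step. You fix $\alpha_0$, condition on $\beta=\sigma^2$ (uniform by Lemma~\ref{lem:squaring-surjective}), note that the Petrie event depends only on $\beta$, factor out $\rho_U$ using the constancy of $\rho$ on the connected coset-type orbit, and obtain $q_g$ from Theorem~\ref{thm:two-matchings}. The only addition is your explicit vertex-by-vertex proof that $H_n$ acts transitively on matchings $\beta$ with $\beta\cup\alpha_0$ a single alternating cycle, which the paper takes from the Gelfand-pair literature, and that argument is valid.
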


\begin{proof}
Fix $\alpha = \alpha_0$ and write $\beta := \sigma^2$, which is uniform on $C_\alpha$ by Lemma~\ref{lem:squaring-surjective}. Let $\mathcal{O}$ be the $H_n$-orbit of coset type $(n)$. The event $\tau \in C_{(N/2)^2}$ reads $\beta\alpha_0 \in C_{(N/2)^2}$, that is, $\beta \in \mathcal{O}$, so it depends on $\beta$ alone. Conditioning on $\beta$, and using that $\rho = \rho_U$ on $\mathcal{O}$ (Lemma~\ref{lem:rho-invariant}),
\[
\Pr\bigl[\tau\in C_{(N/2)^2},\ \varphi\in C_N\bigr]
\;=\; \sum_{\beta \in \mathcal{O}} \Pr[\sigma^2 = \beta]\,\rho(\beta)
\;=\; \rho_U\,\Pr[\sigma^2 \in \mathcal{O}]
\;=\; \rho_U\,q_g,
\]
the last equality by Theorem~\ref{thm:two-matchings}. Dividing by $\Pr[\varphi\in C_N] = p_g$ gives $P_g = q_g\rho_U/p_g$, and $R_g = \rho_U/p_g$ by definition.

The three factors are rational: $q_g$ by its closed form, $p_g$ as the finite sum \eqref{eq:hook-sum}, and $\rho_U$ as the proportion of one-face maps among the $(4g-2)!/(2g-1)!$ lifts of any fixed $\beta_* \in \mathcal{O}$. The equivalences follow from $q_g \sim \sqrt{\pi/(2N)}$ (Theorem~\ref{thm:two-matchings}) and $p_g \sim 2/N$ (Theorem~\ref{thm:probability}).
\end{proof}

\subsection{Conjecture and evidence}\label{ssec:single-data}

By Lemma~\ref{lem:N1-vs-Pg} and Theorem~\ref{thm:gelfand-reduction}, the asymptotics of $\mathcal{N}_g^{(1)}/\mathcal{N}_g$ are those of $P_g = q_g\,\rho_U/p_g$, in which $q_g$ and $p_g$ are known; everything hinges on the single limit $N\rho_U\to2$, equivalently $R_g\to1$. The latter has a direct interpretation: $\{\varphi\in C_N\}$ depends on $\sigma$ while $\{\tau\in C_{(N/2)^2}\}$ depends only on $\sigma^2$, and $R_g\to1$ says that these events become asymptotically independent.

\begin{conjecture}[Petrie independence]\label{conj:decoupling}
As $g\to\infty$ the face condition is asymptotically independent of the Petrie cycle type: for every fixed doubled cycle type $\theta\in\mathcal D_N$,
\[
\Pr[\tau\in C_\theta \mid \varphi\in C_N] = \Pr[\tau\in C_\theta]\,(1+o(1)).
\]
Equivalently $R_g\to1$, i.e. $\rho_U \sim p_g$.
\end{conjecture}

\begin{conjecture}\label{conj:single-curve}
As $g\to\infty$,
\[
P_g \sim q_g \sim \sqrt{\frac{\pi}{2N}},
\qquad
\frac{\mathcal{N}_g^{(1)}}{\mathcal{N}_g}\sim\frac14\sqrt{\frac{\pi}{g}},
\qquad
\mathcal{N}_g^{(1)} \sim \frac{16^g(2g)!}{256\,\sqrt{2\pi}\,g^{7/2}}.
\]
\end{conjecture}

Conjecture~\ref{conj:single-curve} is the case $\theta=(N/2)^2$ of Conjecture~\ref{conj:decoupling} combined with Lemma~\ref{lem:N1-vs-Pg} and Theorem~\ref{thm:main-asymptotic}: from $P_g\sim q_g\sim\sqrt{\pi/(2N)}$ and $1/N\sim 1/(8g)$,
\[
\mathcal{N}_g^{(1)} \sim \mathcal{N}_g\sqrt{\frac{\pi}{2N}}
\sim \frac{16^g(2g)!}{64\pi\sqrt2\,g^3}\cdot\frac14\sqrt{\frac{\pi}{g}}
= \frac{16^g(2g)!}{256\,\sqrt{2\pi}\,g^{7/2}} .
\]

\paragraph{Numerical evidence.}
By Theorem~\ref{thm:gelfand-reduction}, Conjecture~\ref{conj:single-curve} reduces to the single limit $N\rho_U \to 2$, and $\rho_U$ can be computed directly. Call $\sigma \in C_\sigma$ a \emph{lift} of a matching $\beta_*$ if $\sigma^2 = \beta_*$; then $\rho_U$ is the proportion of lifts of a fixed $\beta_*$ of coset type $(n)$ for which $\sigma\alpha_0$ is an $N$-cycle. By Lemma~\ref{lem:squaring-surjective}, a lift amounts to a grouping of the $4g-2$ transpositions of $\beta_*$ into pairs together with the choice of one of the two square roots of each pair, so lifts are easy to list for small $g$ and to sample uniformly for large $g$. We have computed $\rho_U$ exactly for $g \leq 4$, by running through all lifts, and estimated it by Monte Carlo sampling for selected genera $5 \leq g \leq 500$, that is, for $N$ up to about $4000$; the sampler and its validation are described in Appendix~\ref{app:sampler}. Table~\ref{tab:rhoU-data} gives the results for $103 \leq g \leq 500$, the range that bears on the limit, together with the corresponding values of $\sqrt N\,P_g = \sqrt N\,q_g\rho_U/p_g$. Since $q_g$ and $p_g$ are known exactly, this second column carries the same information as the first, expressed through the probability that appears in Conjecture~\ref{conj:single-curve}.

\begin{table}[H]
\centering
\renewcommand{\arraystretch}{1.15}
\begin{tabular}{@{}r r l c c@{}}
\toprule
$g$ & $N$ & $M$ & $N\rho_U$ & $\sqrt N\,P_g$\\
\midrule
$103$ & $820$  & $2.5\times10^{7}$ & $1.987(8)$  & $1.2497(51)$\\
$130$ & $1036$ & $1.5\times10^{7}$ & $1.991(12)$ & $1.2514(74)$\\
$160$ & $1276$ & $1.5\times10^{7}$ & $2.001(13)$ & $1.2568(82)$\\
$200$ & $1596$ & $1.8\times10^{7}$ & $2.004(13)$ & $1.2582(84)$\\
$250$ & $1996$ & $1.5\times10^{7}$ & $1.990(16)$ & $1.2489(102)$\\
$300$ & $2396$ & $1.6\times10^{7}$ & $2.008(17)$ & $1.2597(109)$\\
$400$ & $3196$ & $1.2\times10^{7}$ & $2.049(23)$ & $1.2853(146)$\\
$500$ & $3996$ & $1.1\times10^{7}$ & $1.957(27)$ & $1.2273(167)$\\
\bottomrule
\end{tabular}
\caption{Monte Carlo values of $N\rho_U$ and $\sqrt N\,P_g$ for $103 \leq g \leq 500$. $M$ is the number of sampled lifts; a parenthesized figure is one standard error in units of the last displayed digit. The conjectured limits are $N\rho_U\to2$ and $\sqrt N\,P_g\to\sqrt{\pi/2}=1.25331$. The values for smaller genera, including the exact values for $g \leq 4$, and all raw counts and random seeds are included in the ancillary files.}
\label{tab:rhoU-data}
\end{table}

In Table~\ref{tab:rhoU-data}, the estimates of $N\rho_U$ lie on both sides of $2$, between $1.957$ and $2.049$, with no visible trend: all but one are within $1.7$ standard errors of $2$, the remaining one, at $g = 400$, within $2.1$, and their weighted mean is $1.995$ with standard error $0.005$. Correspondingly, $\sqrt N\,P_g$ lies between $1.227$ and $1.285$, against the conjectured limit $\sqrt{\pi/2} = 1.2533$, and is constant to within the statistical error as $N$ grows from about $800$ to $4000$, so the data give no indication that $P_g$ decays at a rate other than $N^{-1/2}$. At smaller genera $N\rho_U$ approaches $2$ from below, slowly: the deficit is still about $0.03$ at $g = 53$ and $g = 86$. The data thus agree with Conjecture~\ref{conj:single-curve} to within about half a percent, but they cannot exclude a limit of $N\rho_U$ slightly below $2$.

\paragraph{Concluding remarks.}
Theorem~\ref{thm:gelfand-reduction} reduces the single-curve problem to the asymptotics of one quantity, the one-face probability $\rho_U$. The character-theoretic method of Section~\ref{sec:probability} does not carry over: since the Petrie condition involves $\sigma^2$ as well as $\sigma$, the corresponding count leads to a sum over triples of irreducible characters of $S_N$, of which only one collapses to hooks on the $N$-cycle, and known results on characters of $S_N$ provide no way to evaluate or estimate the remaining double sum over partitions. The reduction of Theorem~\ref{thm:gelfand-reduction} avoids this sum, and proving Conjecture~\ref{conj:single-curve} amounts to a direct analysis of $\rho_U$, the probability that a uniformly random lift of $\beta_*$ is unicellular. We hope to return to this question in future work.

\appendix
\section{Algorithms}\label{app:algorithms}

\subsection{Exact enumeration of rooted maps}\label{sec:algorithm}

Fix $\varphi = (1\,2\,\cdots\,N)$. A rooted graph is a fixed-point-free involution $\alpha$ with $\sigma := \varphi\alpha \in C_\sigma$, and the isomorphism classes are the $\langle\varphi\rangle$-orbits of such $\alpha$. Setting $\alpha(i) = j$ determines $\sigma(i) = \varphi(j)$ and $\sigma(j) = \varphi(i)$, so as $\alpha$ is built one transposition at a time, $\sigma$ accumulates as a partial permutation whose chains can be tracked incrementally; a branch is abandoned as soon as a chain would exceed length $4$ or close at a length other than $4$.

\begin{algorithm}[ht]
\caption{Enumerate rooted unicellular $4$-regular ribbon graphs of genus $g$}\label{alg:backtrack}
\begin{algorithmic}[1]
\State $N \gets 8g - 4$; \ $\varphi(i) := (i \bmod N) + 1$; \ $\mathcal{S} \gets \emptyset$.
\State $\alpha[i] \gets {\perp}$, $\sigma[i] \gets {\perp}$ for all $i$.
\Procedure{Extend}{}
  \State Let $i$ be the smallest index with $\alpha[i] = {\perp}$.
  \If{no such $i$} append $\alpha$ to $\mathcal{S}$ and return. \EndIf
  \For{each $j > i$ with $\alpha[j] = {\perp}$}
    \State set $\alpha[i]\gets j,\ \alpha[j]\gets i,\ \sigma[i]\gets\varphi(j),\ \sigma[j]\gets\varphi(i)$.
    \If{the $\sigma$-chains through $i$ and $j$ are valid (open of length $\leq 4$, or closed of length $4$)}
      \State \textsc{Extend}.
    \EndIf
    \State undo the assignment.
  \EndFor
\EndProcedure
\State call \textsc{Extend}; \Return $\mathcal{S}$.
\end{algorithmic}
\end{algorithm}

\begin{proposition}\label{prop:alg-correct}
Algorithm~\ref{alg:backtrack} terminates and returns exactly the set of fixed-point-free involutions $\alpha$ with $\varphi\alpha \in C_\sigma$.
\end{proposition}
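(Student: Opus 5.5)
The plan is to argue termination, soundness, and completeness separately, all resting on one invariant of the recursion. At every call of \textsc{Extend}, the arrays $\alpha$ and $\sigma$ should describe a partial fixed-point-free involution together with the partial permutation it forces through $\sigma = \varphi\alpha$. Concretely, if $\alpha[i]=j$ then $\sigma[i]=\varphi(j)$ and $\sigma[j]=\varphi(i)$. The partial map $\sigma$ is injective, since $\varphi$ and $\alpha$ are; so its functional graph is a disjoint union of open chains (paths) and closed cycles. The invariant to maintain is that every closed cycle has length exactly $4$ and every open chain has at most $4$ vertices. I would check that line~7 preserves injectivity, because it assigns $\sigma$ only on the two newly matched points and only with values $\varphi(j),\varphi(i)$ that no earlier assignment used. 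The test on line~8 then preserves the length condition, because the only chains that can change are those through $i$ and $j$.

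Termination follows because each recursive call strictly decreases the number of unmatched indices, by two, and the loop is finite. So the recursion tree is finite, with depth at most $N/2$.

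Soundness comes from looking at a leaf. There $\alpha$ is a total fixed-point-free involution, because pairs are always formed from two distinct unmatched indices $i<j$. The map $\sigma$ is then total and bijective, so it has no open chains, and by the invariant all its cycles have length $4$. Hence $\sigma = \varphi\alpha \in C_\sigma$.

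Completeness is the step needing most care, though it is not deep. Fix a target $\alpha^\ast$ with $\varphi\alpha^\ast\in C_\sigma$. I would show by induction that the search follows a branch along which $\alpha$ always agrees with $\alpha^\ast$ on its domain. At each node, the smallest unmatched $i$ has $\alpha^\ast(i)=j$ unmatched, and $j>i$ by minimality, so $j$ appears in the loop. After this assignment the partial $\sigma$ is the restriction of $\sigma^\ast=\varphi\alpha^\ast$ to a subset, so its chains are subpaths of the $4$-cycles of $\sigma^\ast$. A subpath of a $4$-cycle is either open with at most $4$ vertices or the whole closed $4$-cycle. The validity test therefore passes and the branch is never pruned, so $\alpha^\ast$ is appended at a leaf. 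Distinct leaves give distinct $\alpha$, because the choice of $j$ at each level differs. So $\mathcal{S}$ contains each valid involution exactly once.

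The only point I expect to need precise statement is the definition of ``valid''. A closed chain must have length exactly $4$, and an open chain must have at most $4$ vertices; equivalently, at most $3$ assigned $\sigma$-arrows. An open chain with $4$ vertices is allowed, since it can still close into a $4$-cycle. With that definition both the invariant and the subpath argument go through.
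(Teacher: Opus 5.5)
Your proof is correct and follows the same route as the paper's. The paper argues termination by finiteness of the recursion, soundness because at a leaf every $\sigma$-chain is a closed $4$-cycle, and completeness because the pruning conditions are necessary; this last point is your observation that the chains of a restriction of $\sigma^\ast=\varphi\alpha^\ast$ are subpaths of its $4$-cycles. Your version is more explicit, adding the injectivity invariant for the partial $\sigma$ and the remark that distinct leaves give distinct $\alpha$, which is what justifies $|\mathcal{S}|=\mathcal{R}_g$ with no duplicates.
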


\begin{proof}
The recursion assigns each index once, so it visits finitely many partial matchings. The pruning conditions are necessary for $\sigma \in C_\sigma$ at every partial stage (a $\sigma$-chain that has exceeded length $4$, or closed at length $\neq 4$, can never become a union of $4$-cycles), so no valid $\alpha$ is discarded; at a leaf every index is matched and every $\sigma$-chain is a closed $4$-cycle, so $\sigma \in C_\sigma$, and conversely every such $\alpha$ is reached.
\end{proof}

Conjugation by $\langle\varphi\rangle$ then partitions the output $\mathcal{S}$ into orbits: their number is $\mathcal{N}_g$, each orbit's stabilizer order is $|\Aut(G)|$ (with $\sum_{[G]} N/|\Aut(G)| = |\mathcal{S}| = \mathcal{R}_g$ as a consistency check), and the Petrie cycle type of a representative gives the component count. The cost is governed by $\mathcal{R}_g$: $\mathcal{R}_4 \approx 4.7\times 10^6$ and $\mathcal{R}_{10} \approx 7.6\times 10^{26}$, so listing all rooted maps is practical only for $g \leq 3$. For $g = 4$ we count the classes with Burnside's lemma instead,
\[
\mathcal{N}_g = \frac1N\sum_{r=0}^{N-1}\bigl|\mathrm{Fix}(\varphi^r)\bigr|,
\]
where $\mathrm{Fix}(\varphi^r)$ is the set of rooted maps $\alpha$ commuting with $\varphi^r$. The term $r = 0$ is $\mathcal{R}_g$, known exactly from the character sum, and the other terms depend only on $\gcd(r,N)$; the rooted maps invariant under a nontrivial rotation are few, and are listed by the same search with the invariance imposed. This gives $\mathcal{N}_4 = 169{,}772$. For the rooted count alone the character sum is superior at every genus, and the enumeration is used only for the data it does not give, namely the number of classes, the automorphism orders and the component counts.

\subsection{Monte Carlo estimation of $\rho_U$}\label{app:sampler}

Fix the half-edge set $[N]=\{0,1,\dots,N-1\}$, the reference matching $\alpha_0(i) = i\oplus1$ ($i\oplus1$ denoting the toggle of the lowest bit, so $\alpha_0$ swaps $2k\leftrightarrow 2k+1$), and the matching $\beta_* := \prod_{t=0}^{N/2-1}\bigl(2t+1,\ (2t+2)\bmod N\bigr)$, the ``odd'' edges $\{1,2\},\{3,4\},\dots,\{N-1,0\}$ of the $N$-gon on $0,1,\dots,N-1$. Then $\alpha_0\cup\beta_*$ is a single $N$-cycle, so $\beta_*$ has coset type $(n)$ and $\rho(\beta_*) = \rho_U$ (Lemma~\ref{lem:rho-invariant}); any other $\beta_*$ of coset type $(n)$ would give the same value. By the proof of Lemma~\ref{lem:squaring-surjective}, the $(4g-2)!/(2g-1)!$ lifts of $\beta_*$ are parametrized bijectively by (a) a pairing of the $4g-2$ transpositions of $\beta_*$ into $2g-1$ unordered pairs, and (b) for each pair $\{(x\,y),(u\,v)\}$ a choice between the two $4$-cycles $(x\,u\,y\,v)$ and $(x\,v\,y\,u)$ squaring to $(x\,y)(u\,v)$. Sampling (a) uniformly (a Fisher--Yates shuffle of the transposition list followed by consecutive pairing) and (b) by independent fair bits therefore produces a uniform lift, and $\rho_U = \Pr_\sigma[\sigma\alpha_0\in C_N]$ is estimated by the proportion $\hat\rho_U = S/M$ of $M$ draws for which the orbit of the half-edge $0$ under $\varphi(i) = \sigma(i\oplus1)$ has length exactly $N$ (Algorithm~\ref{alg:rhoU}). The standard error is $\mathrm{se}(\hat\rho_U) = \sqrt{\hat\rho_U(1-\hat\rho_U)/M}$, a relative error $\approx S^{-1/2}$, and the relative error of $\hat P_g = q_g\hat\rho_U/p_g$ equals that of $\hat\rho_U$ since $q_g$ and $p_g$ are exact. Sampling $\rho_U$ rather than $P_g$ directly is what makes large $g$ accessible: the joint event $\{\varphi\in C_N,\ \tau\in C_{(N/2)^2}\}$ has probability $q_g\rho_U$ against $\rho_U$ for the one-face event among lifts, so a direct estimate of $P_g$ to the same relative accuracy would need a factor $1/q_g \sim \sqrt{2N/\pi}$ more draws---about $20$ at $g = 100$ and $50$ at $g = 500$. Each draw costs $O(N)$ time and space.

The C implementations of Algorithms~\ref{alg:backtrack} and~\ref{alg:rhoU} (with a \texttt{xoshiro256**} pseudo-random generator), the programs computing $\rho_U$ exactly for $g \leq 4$ and $\mathcal{N}_4$ by Burnside's lemma, scripts evaluating $q_g$ and $p_g$ and assembling Table~\ref{tab:rhoU-data}, and the raw counts together with the random seeds used are provided as ancillary files to the arXiv version of this paper.

\begin{algorithm}[ht]
\caption{Monte Carlo estimate of $\rho_U$ at genus $g$ (one run of $M$ draws)}\label{alg:rhoU}
\begin{algorithmic}[1]
\State $N\gets 8g-4$,\quad $n\gets N/2$,\quad $S\gets0$
\State define $\alpha_0$ by $\alpha_0(i)=i\oplus1$ \Comment{pairs $2k\leftrightarrow 2k{+}1$}
\State define $T[t]=\{\,2t{+}1,\ (2t{+}2)\bmod N\,\}$ for $t=0,\dots,n{-}1$ \Comment{edges of $\beta_*$}
\For{$s=1,\dots,M$}
  \State draw a uniform permutation $\pi$ of $\{0,\dots,n{-}1\}$ \Comment{Fisher--Yates shuffle}
  \For{$q=0,\dots,\tfrac n2-1$} \Comment{$\tfrac n2=2g-1$ four-cycles}
    \State $\{x,y\}\gets T[\pi[2q]]$,\quad $\{u,v\}\gets T[\pi[2q{+}1]]$
    \State draw a uniform bit $b\in\{0,1\}$
    \If{$b=1$}
      \State set $\sigma(x){=}u,\ \sigma(u){=}y,\ \sigma(y){=}v,\ \sigma(v){=}x$ \Comment{$4$-cycle $(x\,u\,y\,v)$}
    \Else
      \State set $\sigma(x){=}v,\ \sigma(v){=}y,\ \sigma(y){=}u,\ \sigma(u){=}x$ \Comment{$4$-cycle $(x\,v\,y\,u)$}
    \EndIf
  \EndFor
  \State $c\gets0$,\quad $j\gets0$
  \Repeat
    \State $j\gets\sigma(j\oplus1)$,\quad $c\gets c+1$ \Comment{advance one step of $\varphi=\sigma\alpha_0$}
  \Until{$j=0$}
  \If{$c=N$} \State $S\gets S+1$ \Comment{single face}
  \EndIf
\EndFor
\State \Return $\hat\rho_U=S/M$ \ and \ $\mathrm{se}=\sqrt{\hat\rho_U(1-\hat\rho_U)/M}$
\end{algorithmic}
\end{algorithm}

\paragraph{Validation.}
(i) At $g=2,3,4$, runs of $2\times10^7$ draws return $\hat\rho_U = 0.09995$, $0.07140$, $0.05598$ against the exact $\tfrac1{10}$, $\tfrac1{14}$, $\tfrac{37}{660}$, within $0.7$, $0.6$ and $1.7$ standard errors, and assembling with the exact $q_g$, $p_g$ reproduces the exact values of $P_g$ to the same accuracy; this checks the pipeline end to end where the truth is known. (ii) The code has no genus-dependent branch---only the array length $N$ and the loop bounds vary with $g$---and uses $64$-bit counters, which precludes overflow at the sizes used ($N\le3996$, $M\le2.5\times10^7$). (iii) Two invariants holding with probability one, $\sgn(\sigma\alpha_0) = -1$ (Lemma~\ref{lem:parity}) and the doubled cycle type of $\sigma^2\alpha_0$ (Lemma~\ref{lem:tau-doubled}), can be checked on every sample; they held on all $6\times10^7$ samples of the runs in (i). (iv) Estimating $P_g$ directly, by sampling $\sigma$ uniformly on $C_\sigma$ and testing both $\varphi\in C_N$ and $\tau\in C_{(N/2)^2}$, agrees with the exact values for $g = 2,3,4$ and with the reduction-based estimate for $g = 5$, within $1.4$ standard errors. (v) At $g = 2$ the exact values are consistent with the enumeration of Section~\ref{sec:lowgenus}: $P_2 = \tfrac{4}{15}$ and $A_2/12! = \tfrac{15}{4}$ give $A_2^{(1)}/12! = 1$, i.e.\ the unique single-curve class has trivial automorphism group, as the enumeration confirms. The sampled lifts are a negligible fraction of all lifts (about $10^{-501}$ at $g = 103$); the accuracy is governed by the number $S$ of one-face hits, as for any Monte Carlo estimate of a mean.

\end{document}